\numberwithin{equation}{section}
\newtheorem{prop}{Proposition}[section]
\newtheorem*{prop*}{Proposition}
\newtheorem{lemma}[prop]{Lemma}
\newtheorem*{lemma*}{Lemma}
\newtheorem*{defn*}{Definition}
\newtheorem{theorem}[prop]{Theorem}
\newtheorem{cor}[prop]{Corollary}
\newtheorem*{cor*}{Corollary}
\newtheorem{defn}[prop]{Definition}
\newtheorem{remark}[prop]{Remark}
\begin{document}
\title[Wallcrossing and higher rank JS pairs]{Wall-crossing and invariants of higher rank Joyce--Song stable pairs}
\author{Artan Sheshmani}
\address{600 Math tower, 231 West 18th Avenue, Ohio State University, Columbus, Ohio, 43210}
\email{sheshmani.1@math.osu.edu}
\date{\today}                                   

\thanks{The author acknowledges partial support from NSF grants DMS 0244412, DMS 0555678 and DMS 08-38434 EMSW21-MCTP (R.E.G.S)}

\subjclass{14N35, 53D45}

\begin{abstract}
We introduce a higher rank analog of the Joyce--Song theory of stable pairs. Given a nonsingular projective Calabi--Yau threefold $X$, we define the higher rank Joyce--Song pairs given by ${O}^{\oplus r}_{X}(-n)\rightarrow F$ where $F$ is a pure coherent sheaf with one dimensional support, $r>1$ and $n\gg 0$ is a fixed integer. We equip the higher rank pairs with a Joyce--Song stability condition and compute their associated invariants using the wallcrossing techniques in the category of ``\textit{weakly}" semistable objects. 
\end{abstract}

\maketitle

\section{Introduction}
The Donaldson--Thomas theory of a Calabi--Yau threefold $X$ is defined in \cite{a24} and \cite{a20} via integration against the virtual fundamental class of the moduli space of ideal sheaves. In \cite{a18} and \cite{a17} Pandharipande and Thomas introduced objects given by pairs $(F,s)$ where $F$ is a pure sheaf with one dimensional support together with a fixed Hilbert polynomial and $s\in H^{0}(X,F)$ is given as a section of $F$. The authors computed the invariants of stable pairs using deformation theory and virtual fundamental classes.

Following their work, Joyce and Song defined a similar notion of a (\textit{twisted}) stable pair given by a sheaf $F$ and section map $s:\mathcal{O}(-n)\rightarrow F$ where $n\gg 0$ was chosen to be a sufficiently large integer so that the cohomology vanishing condition $H^{1}(F(n))=0$ is satisfied. These stable pairs were equipped with a stability condition rather different than the one used in \cite{a17}:

\begin{defn}\label{newstabl} 
(Joyce--Song pair stability) Given a coherent sheaf $F$ let $p_{F}$ denote the reduced Hilbert polynomial of $F$ with respect to the ample line bundle $\mathcal{O}_{X}(1)$. A pair $\phi:\mathcal{O}\rightarrow F$ is called $\Hat{\tau}$-stable if the following conditions are satisfied:
\begin{enumerate}
\item $p_{F'}\leq p_{F}$ for all proper subsheaves $F'$ of $F$ such that $F'\neq0$.
\item If $\phi$ factors through $F'$ ($F'$ a proper subsheaf of $F$), then $p_{F'}<p_{F}$.
\end{enumerate}
\end{defn}
In this article we refer to this stability as $\Hat{\tau}$-stability. For more on Joyce--Song stability look at \cite[Definition 12.2]{a30}.

In \cite{a30} Joyce and Song then described the advantage of defining their notion of stable pairs, which was to compute the ``\textit{Generalized Donaldson--Thomas invariants}" by means of (pair) invariants for $\hat{\tau}$-stable pairs. The generalized Donaldson--Thomas invariants could not be calculated using the machinery developed by Thomas in \cite{a20}, as they were given by invariants of semistable sheaves (not just the stable ones). After work of Joyce and Song the interesting question was to whether one is able to study and compute the invariants of objects composed of a sheaf $F$ and multiple sections given by the morphism $s_{1}\cdots s_{r}:\mathcal{O}^{\oplus r}(-n)\rightarrow F$ for $r>1$. The main purpose of the current article is to compute the invariants of these ``\textit{higher rank}" $\hat{\tau}$-stable Joyce--Song pairs with respect to the generalized Donaldson--Thomas invariants, using the method of wallcrossing.

\textbf{The main idea:} The general philosophy in this article is to define an auxiliary category $\mathcal{B}_{p}$ \cite[Section 13.1]{a30}. The objects in $\mathcal{B}_{p}$ are defined similar to the higher rank Joyce--Song pairs and they are classified based on their numerical class $(\beta,r)$. Here, $\beta$ denotes the Chern character of $F$ and $r$ denotes the number of sections of $F$ being considered in the construction, and the definition of the category $\mathcal{B}_{p}$ allows one to define ``\textit{weak}" stability conditions  (say) $\tau^{\bullet}$ and $\tilde{\tau}$ on $\mathcal{B}_{p}$ (look at Definition \ref{weak}).

As we will show  in Theorem \ref{HFT-Bp} and Corollary \ref{N}, the moduli stack of $\tilde{\tau}$-semistable objects in $\mathcal{B}_{p}$ is closely related to the parameterizing moduli stack of higher rank $\hat{\tau}$-semistable Joyce--Song pairs, which enables us to define the invariants of $\hat{\tau}$-semistable pairs of rank $r>1$, $\textbf{N}^{\beta,r}_{stp}(\hat{\tau})$, as$$\textbf{N}^{\beta,r}_{stp}(\hat{\tau})=(-1)^{r^2}\textbf{B}_{p}^{ss}(X,\beta, r, \tilde{\tau}).$$Here $\textbf{B}_{p}^{ss}(X,\beta, r, \tilde{\tau})$ are the invariants of $\tilde{\tau}$-semistable objects in $\mathcal{B}_{p}$ of class $(\beta,r)$ and the sign $(-1)^{r^2}$ is explained in Corollary \ref{N}. 

On the other hand, changing the weak stability condition from $\tau^{\bullet}$ to $\tilde{\tau}$ and using the machinery of the Ringel-Hall algebra of the category $\mathcal{B}_{p}$ discussed in \cite{a30}, provides us with a wallcrossing identity, which relates the invariants of $\tilde{\tau}$-semistable objects in $\mathcal{B}_{p}$ to the invariants of $\tau^{\bullet}$-semistable objects. Moreover, by Proposition \ref{stackprop}, the invariants of $\tau^{\bullet}$-semistable objects are equal to the generalized Donaldson--Thomas invariants of Gieseker semistable sheaves  ($\tau$-semistable for short). 

Therefore, using the above two correposndences, enables us to eventually describe $\textbf{N}^{\beta,r}_{stp}(\hat{\tau})$ with respect to the generalized Donaldson--Thomas invariants of $\tau$-semistable sheaves.  We emphasize here that, though this approach can be used conceptually for all ranks $r>1$, in this article we perform the wallcrossing calculation only for rank $2$ pairs given by $\mathcal{O}^{\oplus 2}(-n)\rightarrow F$. It turns out (c.f. Corollary \ref{he-lol}) that when $r=2$, $\textbf{N}^{\beta,2}_{stp}(\Hat{\tau})=\textbf{B}^{ss}_{p}(X,\beta,2,\tilde{\tau})$ and hence by Corollary \ref{he-lol} and Equation \eqref{wall-cross}, we obtain the following identity between the invariants, $\textbf{N}^{\beta,2}_{stp}(\Hat{\tau})$, and the generalized Donaldson--Thomas invariants, $\overline{DT}^{\beta_{i}}(\tau)$:
\begin{align*}\label{yet"}
&
\textbf{N}^{\beta,2}_{stp}(\Hat{\tau})=
\sum_{1\leq l,\beta_{1}+\cdots+\beta_{l}=\beta}\frac{-1}{4}\cdot\frac{(1)}{l!}\cdot\prod_{i=1}^{l}\bigg(\overline{DT}^{\beta_{i}}(\tau)\cdot \bar{\chi}_{\mathcal{B}_{p}}((\beta_{1}+\cdots+\beta_{i-1},2),(\beta_{i},0))\notag\\
 &
\cdot (-1)^{\sum_{i=1}^{l}\bar{\chi}_{\mathcal{B}_{p}}((\beta_{1}+\cdots \beta_{i-1},2),(\beta_{i},0))}\bigg).\notag\\
\end{align*}
The above identity can be regarded as the definition of the rank 2 Joyce--Song stable pair invariants of class $(\beta,2)$ (Corollary \ref{he-lol} and Equation \eqref{define-it}).
\subsection*{Acknowledgments}
The author thanks  Sheldon Katz, Yukinobu Toda, Richard Thomas, Emanuel Diaconescu for their invaluable help. Moreover, the author sincerely thanks the referee's valuable comments which helped to highly improve the content and the presentation of the paper.
\section{The auxiliary category $\mathcal{B}_{p}$}\label{wallcross}
\begin{defn}\label{Ap} 
Let $X$ be a nonsingular projective Calabi--Yau threefold equipped with ample line bundle $\mathcal{O}_{X}(1)$. Let $\tau$ denote the Gieseker stability condition on the abelian category of coherent sheaves on $X$. Define $\mathcal{A}_{p}$ to be the sub-category of coherent sheaves whose objects are zero sheaves and non-zero $\tau$-semistable sheaves with fixed reduced Hilbert polynomial $p$ \footnote {Look at \cite[Definition 13.1]{a30} for more detail}. 
\end{defn}
\begin{defn}\label{Bp}
Fix an integer $n$. Now define the category $\mathcal{B}_{p}$ to be the category whose objects are triples $(F,V,\phi)$, where $F\in Obj(\mathcal{A}_{p})$, $V$ is a finite \text{\text{dim}}ensional $\mathbb{C}$-vector space, and $\phi: V\rightarrow \operatorname{Hom}(\mathcal{O}_{X}(-n),F)$ is a $\mathbb{C}$-linear map. Given $(F,V,\phi)$ and $(F',V',\phi')$ in $\mathcal{B}_{p}$ define morphisms $(F,V,\phi)\rightarrow (F',V',\phi')$ in $\mathcal{B}_{p}$ to be pairs of morphisms $(f,g)$ where $f:F\rightarrow F'$ is a morphism in $\mathcal{A}_{p}$ and $g:V\rightarrow V'$ is a $\mathbb{C}$-linear map, such that the following diagram commutes:
\begin{equation*}
\begin{tikzpicture}
back line/.style={densely dotted}, 
cross line/.style={preaction={draw=white, -, 
line width=6pt}}] 
\matrix (m) [matrix of math nodes, 
row sep=2em, column sep=3.25em, 
text height=1.5ex, 
text depth=0.25ex]{ 
V&\operatorname{Hom}(\mathcal{O}_{X}(-n),F)\\
V'&\operatorname{Hom}(\mathcal{O}_{X}(-n),F')\\};
\path[->]
(m-1-1) edge node [above] {$\phi$} (m-1-2)
(m-1-1) edge node [left] {$g$} (m-2-1)
(m-1-2) edge node [right] {$f$}(m-2-2)
(m-2-1) edge node [above] {$\phi'$} (m-2-2);
\end{tikzpicture}
\end{equation*} 
\end{defn}
Our definition of the category $\mathcal{B}_{p}$ is compatible with that of \cite[Definition 13.1]{a30}. \\

Now we define the numerical class of objects in $\mathcal{B}_{p}$ based on \cite[Section 3.1]{a30}. 
\begin{defn}
Define the Grothendieck group $K(\mathcal{B}_{p})=K(\mathcal{A}_{p})\oplus \mathbb{Z}$ where $K(\mathcal{A}_{p})$ is given by the image of $K_{0}(\mathcal{A}_{p})$ in $K(\operatorname{Coh}(X)):=K^{num}(\operatorname{Coh}(X))$. Let  $\mathcal{C}(\mathcal{A}_{p})$ denote the positive cone of $\mathcal{A}_{p}$ defined as$$\mathcal{C}(\mathcal{A}_{p})=\{[E]\in K^{num}(\mathcal{A}_{p}):0\neq E\in \mathcal{A}_{p}\}.$$Now given $(F,V,\phi)\in \mathcal{B}_{p}$, we write $[(F,V,\phi)]=([F],\text{\text{dim}}(V))$ and define the positive cone of $\mathcal{B}_{p}$ by:
\begin{center}
$\mathcal{C}(\mathcal{B}_{p})=\{(\beta,d)| \beta\in \mathcal{C}(\mathcal{A}_{p})$ and $d\geq 0$ or $\beta=0$ and $d>0\},$
\end{center}
\end{defn}
We state the following results by Joyce and Song without proof:
\begin{lemma}
\cite[Lemma 13.2]{a30}. The category $\mathcal{B}_{p}$ is abelian and $\mathcal{B}_{p}$ satisfies the condition that If $[F]=0\in K(\mathcal{A}_{p})$ then $F\cong 0$. Moreover, $\mathcal{B}_{p}$ is noetherian and artinian and the moduli stacks $\mathfrak{M}^{(\beta,d)}_{\mathcal{B}_{p}}$ are of finite type for all $(\beta,d)\in C(\mathcal{B}_{p})$. 
\end{lemma}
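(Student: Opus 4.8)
The plan is to prove this (it is \cite{a30}, Lemma 13.2) by reducing each clause to the corresponding statement for the abelian category $\mathcal{A}_p$ and for the category $\mathrm{Vect}_{\mathbb{C}}$ of finite-dimensional $\mathbb{C}$-vector spaces, using that $\mathcal{B}_p$ is assembled from these two via the additive, left exact functor $H(-):=\operatorname{Hom}(\mathcal{O}_X(-n),-)\colon\mathcal{A}_p\to\mathrm{Vect}_{\mathbb{C}}$ (left exactness holds because $\mathcal{A}_p\hookrightarrow\operatorname{Coh}(X)$ is exact, by the Huybrechts--Lehn structure theory of semistable sheaves of fixed reduced Hilbert polynomial, and $\operatorname{Hom}$ is left exact on $\operatorname{Coh}(X)$). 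For abelian-ness I would, given a morphism $(f,g)\colon(F,V,\phi)\to(F',V',\phi')$, take its kernel to be $(\ker f,\ker g,\phi|_{\ker g})$ and its cokernel to be $(\operatorname{coker}f,\operatorname{coker}g,\bar{\phi})$, and verify compatibility of the triple-datum: $\phi$ carries $\ker g$ into $\ker H(f)=H(\ker f)$ using $H(f)\phi=\phi'g$ and left exactness, while the composite $V'\xrightarrow{\phi'}H(F')\to H(\operatorname{coker}f)$ kills $g(V)$ because $H(F)\to H(F')\to H(\operatorname{coker}f)$ is already zero by functoriality --- so only left exactness of $H$, never right exactness, is invoked. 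Similarly the image is $(\operatorname{im}f,\operatorname{im}g,\phi'')$, using that $H(f)$ factors through $H(\operatorname{im}f)\hookrightarrow H(F')$. The two projections $\Pi_{\mathcal{A}}\colon\mathcal{B}_p\to\mathcal{A}_p$ and $\Pi_V\colon\mathcal{B}_p\to\mathrm{Vect}_{\mathbb{C}}$ are then exact and jointly faithful: a morphism $(f,g)$ is mono, epi or iso exactly when both $f$ and $g$ are, and the canonical map $\operatorname{coim}\to\operatorname{im}$ in $\mathcal{B}_p$ is an isomorphism because it is one in each component ($\mathcal{A}_p$ abelian by Huybrechts--Lehn, $\mathrm{Vect}_{\mathbb{C}}$ trivially), which yields all the abelian axioms. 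The condition ``$[F]=0\Rightarrow F\cong 0$'' is then immediate: the numerical class determines $P_F(m)=\chi(X,F(m))$, and a nonzero object of $\mathcal{A}_p$ is a nonzero semistable sheaf whose Hilbert polynomial is a positive multiple of the fixed nonzero reduced polynomial $p$, so $[F]=0$ forces $P_F=0$ and hence $F\cong 0$.

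For the Noetherian and Artinian properties I would again appeal to Huybrechts--Lehn: $\mathcal{A}_p$ is of finite length, its simple objects being the $\tau$-stable sheaves of reduced Hilbert polynomial $p$, and a finite-dimensional vector space has length equal to its dimension. An ascending (respectively descending) chain of subobjects of $(F,V,\phi)$ projects under $\Pi_{\mathcal{A}}$ and $\Pi_V$ to such chains in $\mathcal{A}_p$ and $\mathrm{Vect}_{\mathbb{C}}$; both stabilize, and since a morphism of $\mathcal{B}_p$ is an isomorphism precisely when its two components are, the original chain stabilizes. Equivalently $\mathcal{B}_p$ has finite length, with $\operatorname{length}(F,V,\phi)=\operatorname{length}_{\mathcal{A}_p}(F)+\dim_{\mathbb{C}}V$.

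For finite type of $\mathfrak{M}^{(\beta,d)}_{\mathcal{B}_p}$ I would use the forgetful $1$-morphism $\mathfrak{M}^{(\beta,d)}_{\mathcal{B}_p}\to\mathfrak{M}^{\beta}_{\mathcal{A}_p}\times[\operatorname{Spec}\mathbb{C}/\mathrm{GL}_d(\mathbb{C})]$. Its target is of finite type: the $\tau$-semistable sheaves with fixed numerical class $\beta$ form a bounded family (Grothendieck, Maruyama, Langer boundedness), so $\mathfrak{M}^{\beta}_{\mathcal{A}_p}$ is of finite type, and $[\operatorname{Spec}\mathbb{C}/\mathrm{GL}_d(\mathbb{C})]$ evidently is. The morphism itself is representable and of finite type: over a sheaf $F$ and a vector space $V$ the remaining datum is a point of the affine space $\operatorname{Hom}_{\mathbb{C}}(V,H^0(X,F(n)))$, whose dimension is at most $d\cdot\sup_{F}h^0(X,F(n))<\infty$ by boundedness (and equals the constant $d\cdot P_\beta(n)$ once $n\gg 0$, as fixed throughout, so that $H^{>0}(X,F(n))=0$ on the whole family). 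Composing, $\mathfrak{M}^{(\beta,d)}_{\mathcal{B}_p}$ is of finite type.

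The substantive content is all imported --- the Huybrechts--Lehn structure theory for $\mathcal{A}_p$ and the boundedness of Gieseker-semistable sheaves --- so the only point in the argument proper that demands genuine care is the first step: checking that the triple-datum $\phi$ transforms correctly under kernels, cokernels and images, where the essential observation is that each such check uses only left exactness of $H$, and that the coimage--image comparison, together with the detection of monomorphisms, epimorphisms and isomorphisms, reduces componentwise.
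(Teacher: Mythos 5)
The paper states this lemma without proof, simply citing Joyce--Song (Lemma 13.2), so there is no in-paper argument to compare against; your proposal is a correct reconstruction of the standard componentwise reduction to $\mathcal{A}_{p}$ and finite-dimensional vector spaces via the left exact functor $\operatorname{Hom}(\mathcal{O}_{X}(-n),-)$, with the abelian-ness of $\mathcal{A}_{p}$, its finite length, and boundedness of the semistable family correctly identified as the imported inputs. In particular your observation that every compatibility check for the triple-datum under kernels, cokernels and images uses only left exactness of $\operatorname{Hom}(\mathcal{O}_{X}(-n),-)$ is exactly the point that makes the construction work.
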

\begin{remark}
The category $\mathcal{A}_{p}$ embeds as a full and faithful sub-category in $\mathcal{B}_{p}$ by $F\rightarrow (F,0,0)$. Moreover, it is shown in \cite[Equation (13.3)]{a30} that every object $(F,V,\phi)$ sits in a short exact sequence.
\begin{equation}\label{coreexact}
0\rightarrow (F,0,0)\rightarrow (F,V,\phi)\rightarrow (0,V,0)\rightarrow 0
\end{equation}
\end{remark}
Next we recall the definition of ``\textit{weak}" (semi)stability for a general abelian category $\mathcal{A}$. 
\begin{defn}\label{weakjoyce}
\footnote{For more detail on Definition \ref{weakjoyce} look at \cite[Definition 3.5]{a30}.} 
Let $\mathcal{A}$ be an abelian category. Let $K(\mathcal{A})$ be the quotient of $K_{0}(\mathcal{A})$ by some fixed group. Let $$C(\mathcal{A}):=\{[E]\in K^{num}(\mathcal{A}): 0 \neq E \in \mathcal{A}\}$$ be the positive cone of $\mathcal{A}$. Suppose $(T,\leq)$ is a totally ordered set and $\tau: C(\mathcal{A})\rightarrow T$ a map. We call $(\tau,T,\leq)$ a stability condition on $\mathcal{A}$ if whenever $\alpha,\beta,\gamma\in C(\mathcal{A})$ with $\beta=\alpha+\gamma$ then either $$\tau(\alpha)<\tau(\beta)<\tau(\gamma)$$ or $$\tau(\alpha)>\tau(\beta)>\tau(\gamma)$$ or $\tau(\alpha)=\tau(\beta)=\tau(\gamma)$. We call $(\tau,T,\leq)$ a weak stability condition on $\mathcal{A}$ if whenever $\alpha,\beta,\gamma\in C(\mathcal{A})$ with $\beta=\alpha+\gamma$ then either $\tau(\alpha)\leq\tau(\beta)\leq\tau(\gamma)$ or $\tau(\alpha)\geq\tau(\beta)\geq\tau(\gamma)$. For such $(\tau,T,\leq)$, we say that a nonzero object $E$ in $\mathcal{A}$ is
\begin{enumerate}
\item $\tau$-semistable if $\forall S\subset E$ where $S\ncong 0$, we have $\tau([S])\leq \tau([E/S])$
\item $\tau$-stable if $\forall S\subset E$ where $S\ncong 0$, we have $\tau([S])< \tau([E/S])$
\item $\tau$-unstable if it is not $\tau$-semistable.
\end{enumerate}
\end{defn}
Now we apply the definition of weak stability conditions to the category $\mathcal{B}_{p}$: 
\begin{defn}\label{weak}
Define the weak stability conditions $(\tau^{\bullet}, T^{\bullet}, \leq)$, $(\tilde{\tau},\tilde{T}, \leq)$ and $(\tau^{n}, T^{n}, \leq)$ on $\mathcal{B}_{p}$  by:
\begin{enumerate}
\item $T^{\bullet}=\{-1,0\}$ with the natural order $-1<0$, and $\tau^{\bullet}(\beta,d)=0$ if $d=0$ and $\tau^{\bullet}(\beta,d)=-1$ if $d>0$.
\item $\tilde{T}=\{0, 1\}$ with the natural order $0<1$, and $\tilde{\tau}(\beta,d)=0$ if $d=0$ and $\tilde{\tau}(\beta,d)=1$ if $d>0$.
\item $T^{n}=\{0\}$, and $\tau^{n}(\beta,d)=0$ $\forall(\beta,d)$.
\end{enumerate}
\end{defn}
Definition \ref{weak} is compatible with that of \cite[Definition. 13.5]{a30}.
\section*{Moduli stack of objects in $\mathcal{B}_{p}$}
Now we describe the moduli stack of weakly semistable objects in $\mathcal{B}_{p}$. We construct this moduli stack only for the $\tilde{\tau}$-semistability condition as the constructions for $\tau^{\bullet}$-semistability is similar and left to the reader. 

\begin{remark}\label{better}
In order to better facilitate the understanding of our strategy in this article, we emphasize that $\tilde{\tau}$ and $\tau^{\bullet}$ stability conditions are only defined as weak stability conditions on the auxiliary category $\mathcal{B}_{p}$ and so it remains to prove that  $\tau^{\bullet}$-semistable and $\tilde{\tau}$-semistable objects in $\mathcal{B}_{p}$ are related to  $\tau$-semistable sheaves and $\hat{\tau}$-semistable pairs respectively. 
For the former, we use the fact that by \cite[Proposition 13.6]{a30} the $\tau^{\bullet}$-semistable objects in $\mathcal{B}_{p}$ are equivalent to $\tau$-semistable sheaves and then we prove the correspondence between $\hat{\tau}$-semistable pairs and $\tilde{\tau}$-semistable objects in Theorem \ref{HFT-Bp}.
\end{remark}

\subsection*{Objects in $\mathcal{B}_{p}$ as complexes in the derived category}

\begin{remark}\label{N-E-F}
By \cite[Lemma 13.2]{a30} there exists a natural embedding functor $\mathfrak{F}:\mathcal{B}_{p}\rightarrow D(X)$ which takes $(F,V,\phi_{V})\in \mathcal{B}_{p}$ to an object in the derived category given by $\cdots\rightarrow 0\rightarrow V\otimes \mathcal{O}_{X}(-n)\rightarrow F\rightarrow 0\rightarrow \cdots$ where $V\otimes \mathcal{O}_{X}(-n)$ and $F$ sit in degree $-1$ and $0$. Assume that $\operatorname{dim}(V)=r$. In that case $V\otimes \mathcal{O}_{X}(-n)\cong \mathcal{O}_{X}(-n)^{\oplus r}$. Therefore, one may view an object  $(F,V,\phi_{V})\in \mathcal{B}_{p}$ as an object in an abelian subcategory of the derived category, $[\mathcal{O}_{X}(-n)^{\oplus r}\rightarrow F]$.
\end{remark}
\begin{defn}\label{Bp-sflat}
Fix a parameterizing scheme of finite type $S$. Let $\pi_{X}:X\times S\rightarrow X$ and $\pi_{S}:X\times S\rightarrow S$ denote the natural projections. Use the natural embedding functor $\mathfrak{F}:\mathcal{B}_{p}\rightarrow D(X)$ in Remark \ref{N-E-F}. Define the $S$-flat family of objects in $\mathcal{B}_{p}$ of type $(\beta,r)$ as a complex $$\pi_{S}^{*}M\otimes \pi_{X}^{*}\mathcal{O}_{X}(-n)\xrightarrow{\psi_{S}} \mathcal{F}$$ sitting in degree $-1$ and $0$ such that $\mathcal{F}$ is given by an $S$-flat family of semistable sheaves with fixed reduced Hilbert polynomial $p$ with $\operatorname{ch}(F)=\beta$ and $M$ is a vector bundle of rank $r$ over $S$. A morphism between two such $S$-flat families is given by a morphism between the complexes $\pi_{S}^{*}M\otimes \pi_{X}^{*}\mathcal{O}_{X}(-n)\xrightarrow{\psi_{S}} \mathcal{F}$ and $\pi_{S}^{*}M'\otimes \pi_{X}^{*}\mathcal{O}_{X}(-n)\xrightarrow{\psi'_{S}} \mathcal{F}'$:
\begin{center}
\begin{tikzpicture}
back line/.style={densely dotted}, 
cross line/.style={preaction={draw=white, -, 
line width=6pt}}] 
\matrix (m) [matrix of math nodes, 
row sep=2em, column sep=3.5em, 
text height=1.5ex, 
text depth=0.25ex]{ 
\pi_{S}^{*}M\otimes \pi_{X}^{*}\mathcal{O}_{X}(-n)&\mathcal{F}\\
\pi_{S}^{*}M'\otimes \pi_{X}^{*}\mathcal{O}_{X}(-n)&\mathcal{F}'.\\};
\path[->]
(m-1-1) edge node [above] {$\psi_{S}$} (m-1-2)
(m-1-1) edge (m-2-1)
(m-1-2) edge (m-2-2)
(m-2-1) edge node [above] {$\psi'_{S}$} (m-2-2);
\end{tikzpicture}
\end{center}
Moreover an isomorphism between two such $S$-flat families in $\mathcal{B}_{p}$ is given by an isomorphism between the associated complexes $\pi_{S}^{*}M\otimes \pi_{X}^{*}\mathcal{O}_{X}(-n)\xrightarrow{\psi_{S}} \mathcal{F}$ and $\pi_{S}^{*}M'\otimes \pi_{X}^{*}\mathcal{O}_{X}(-n)\xrightarrow{\psi'_{S}} \mathcal{F}'$:
\begin{center}
\begin{tikzpicture}
back line/.style={densely dotted}, 
cross line/.style={preaction={draw=white, -, 
line width=6pt}}] 
\matrix (m) [matrix of math nodes, 
row sep=2em, column sep=3.5em, 
text height=1.5ex, 
text depth=0.25ex]{ 
\pi_{S}^{*}M\otimes \pi_{X}^{*}\mathcal{O}_{X}(-n)&\mathcal{F}\\
\pi_{S}^{*}M'\otimes \pi_{X}^{*}\mathcal{O}_{X}(-n)&\mathcal{F}'.\\};
\path[->]
(m-1-1) edge node [above] {$\psi_{S}$} (m-1-2)
(m-1-1) edge node [left] {$\cong$} (m-2-1)
(m-1-2) edge node [right] {$\cong$} (m-2-2)
(m-2-1) edge node [above] {$\psi'_{S}$} (m-2-2);
\end{tikzpicture}
\end{center}
From now on, by objects in $\mathcal{B}_{p}$ we mean the objects which lie in the image of the natural embedding functor $\mathfrak{F}:\mathcal{B}_{p}\rightarrow D(X)$ in Remark \ref{N-E-F}. Moreover, by the $S$-flat family of objects in $\mathcal{B}_{p}$, their morphisms (or isomorphisms) we mean their corresponding definitions as stated in Definition \ref{Bp-sflat}. 
\end{defn}
Now we define the \textit{rigidified} objects in $\mathcal{B}_{p}$. We will not perform wallcrossing computation for these objects and they are only going to provide the means for construction of moduli stack of objects in $\mathcal{B}_{p}$ as a quotient stack.
\subsection{Rigidified objects and their realization in the derived category}
As stated in Definition \ref{Bp-sflat}, the objects in $\mathcal{B}_{p}$ are defined such that the sheaf sitting in degree $-1$ is given by a trivial vector bundle of rank $r$ isomorphic to $\mathcal{O}^{\oplus r}_{X}(-n)$. However we have not fixed any choice of such trivialization. Below we will define the closely related objects, which we denote by rigidified objects in $\mathcal{B}_{p}$, by fixing a choice of the trivialization of $\mathcal{O}^{\oplus r}_{X}(-n)$. These objects are essential for our construction, as their moduli stack forms a $GL_{r}(\mathbb{C})$-torsor over the (to be defined)  moduli stack of objects in $\mathcal{B}_{p}$. Therefore, the moduli stack of objects in $\mathcal{B}_{p}$ is obtained by taking the stacky quotient of the moduli stack of rigidified objects by the action of $GL_{r}(\mathbb{C})$.

\begin{defn}\label{rigid-Bp}
Fix a positive integer $r$ and define the subcategory $\mathcal{B}_{p}^{\textbf{R}}\subset \mathcal{B}_{p}$ to be the category of ``rigidified" objects in $\mathcal{B}_{p}$ of rank $r$ whose objects are defined by tuples $(F,\mathbb{C}^{\oplus r},\rho)$ where $F$ is a $\tau$-semistable coherent sheaf with reduced Hilbert polynomial $p$ and $\operatorname{ch}(F)=\beta$ and $\rho:\mathbb{C}^{r}\rightarrow \operatorname{Hom}(\mathcal{O}_{X}(-n),F)$. Given two rigidified objects of fixed type $(\beta,r)$ as $(F,\mathbb{C}^{\oplus r},\rho)$ and $(F',\mathbb{C}^{\oplus r},\rho')$ in $\mathcal{B}_{p}^{\textbf{R}}$, define morphisms $(F,\mathbb{C}^{\oplus r},\rho)\rightarrow (F',\mathbb{C}^{\oplus r},\rho')$ to be given by a morphism $f:F\rightarrow F'$ in $\mathcal{A}_{p}$ such that the following diagram commutes:
\begin{equation*}
\begin{tikzpicture}
back line/.style={densely dotted}, 
cross line/.style={preaction={draw=white, -, 
line width=6pt}}] 
\matrix (m) [matrix of math nodes, 
row sep=2em, column sep=3.25em, 
text height=1.5ex, 
text depth=0.25ex]{ 
\mathbb{C}^{\oplus r}&\operatorname{Hom}(\mathcal{O}_{X}(-n),F)\\
\mathbb{C}^{\oplus r}&\operatorname{Hom}(\mathcal{O}_{X}(-n),F').\\};
\path[->]
(m-1-1) edge node [above] {$\rho$} (m-1-2) 
(m-2-1) edge node [above] {$\rho'$} (m-2-2)
(m-1-2) edge node [right] {$f$}(m-2-2)
(m-1-1) edge node [left] {$\operatorname{id}$} (m-2-1);
\end{tikzpicture}
\end{equation*} 
\end{defn}
\begin{remark}\label{embed-func-BpR}
There exists a natural embedding functor $\mathfrak{F}^{\textbf{R}}:\mathcal{B}^{\textbf{R}}_{p}\rightarrow D(X)$ which takes $(F,\mathbb{C}^{\oplus r},\rho)\in \mathcal{B}^{\textbf{R}}_{p}$ to an object in the derived category given by $\cdots\rightarrow 0\rightarrow \mathbb{C}^{\oplus r}\otimes \mathcal{O}_{X}(-n)\rightarrow F\rightarrow 0\rightarrow \cdots$ where $\mathbb{C}^{\oplus r}\otimes \mathcal{O}_{X}(-n)$ sits in degree $-1$ and $F$ sits in degree $0$. One may view an object in $\mathcal{B}^{\textbf{R}}_{p}$ as a complex $\phi:\mathcal{O}^{\oplus r}_{X}(-n)\rightarrow F$ such that the choice of trivialization of $\mathcal{O}^{\oplus r}_{X}(-n)$ is fixed.
\end{remark}
\begin{defn}\label{Bp-R-sflat}
Fix a parametrizing scheme of finite type $S$. Use the natural embedding functor $\mathfrak{F}^{\textbf{R}}:\mathcal{B}^{\textbf{R}}_{p}\rightarrow D(X)$ in Remark \ref{embed-func-BpR}. An $S$-flat family of objects of type $(\beta,r)$ in $\mathcal{B}^{\textbf{R}}_{p}$ is given by a complex $$\pi_{S}^{*}\mathcal{O}_{S}^{\oplus r}\otimes \pi_{X}^{*}\mathcal{O}_{X}(-n)\xrightarrow{\psi_{S}} \mathcal{F}$$ sitting in degree $-1$ and $0$ such that $\mathcal{F}$ is given by an $S$-flat family of semistable sheaves with fixed reduced Hilbert polynomial $p$ with $\operatorname{ch}(\mathcal{F}_{s})=\beta$ for all $s\in S$. A morphism between two such $S$-flat families in $\mathcal{B}^{\textbf{R}}_{p}$ is given by a morphism between the complexes $\pi_{S}^{*}\mathcal{O}_{S}^{\oplus r}\otimes \pi_{X}^{*}\mathcal{O}_{X}(-n)\xrightarrow{\psi_{S}} \mathcal{F}$ and $\pi_{S}^{*}\mathcal{O}_{S}^{\oplus r}\otimes \pi_{X}^{*}\mathcal{O}_{X}(-n)\xrightarrow{\psi'_{S}} \mathcal{F}'$:
\begin{center}
\begin{tikzpicture}
back line/.style={densely dotted}, 
cross line/.style={preaction={draw=white, -, 
line width=6pt}}] 
\matrix (m) [matrix of math nodes, 
row sep=2em, column sep=3.5em, 
text height=1.5ex, 
text depth=0.25ex]{ 
\pi_{S}^{*}\mathcal{O}_{S}^{\oplus r}\otimes \pi_{X}^{*}\mathcal{O}_{X}(-n)&\mathcal{F}\\
\pi_{S}^{*}\mathcal{O}_{S}^{\oplus r}\otimes \pi_{X}^{*}\mathcal{O}_{X}(-n)&\mathcal{F}'.\\};
\path[->]
(m-1-1) edge node [above] {$\psi_{S}$} (m-1-2)
(m-1-1) edge node [left] {$\operatorname{id}_{\mathcal{O}_{X\times S}}$}(m-2-1)
(m-1-2) edge (m-2-2)
(m-2-1) edge node [above] {$\psi'_{S}$} (m-2-2);
\end{tikzpicture}
\end{center}
Moreover an isomorphism between two such $S$-flat families in $\mathcal{B}^{\textbf{R}}_{p}$ is given by an isomorphism between the associated complexes $\pi_{S}^{*}\mathcal{O}_{S}^{\oplus r}\otimes \pi_{X}^{*}\mathcal{O}_{X}(-n)\xrightarrow{\psi_{S}} \mathcal{F}$ and $\pi_{S}^{*}\mathcal{O}_{S}^{\oplus r}\otimes \pi_{X}^{*}\mathcal{O}_{X}(-n)\xrightarrow{\psi'_{S}} \mathcal{F}'$:
\begin{center}
\begin{tikzpicture}
back line/.style={densely dotted}, 
cross line/.style={preaction={draw=white, -, 
line width=6pt}}] 
\matrix (m) [matrix of math nodes,  
row sep=2em, column sep=3.5em, 
text height=1.5ex, 
text depth=0.25ex]{ 
\pi_{S}^{*}\mathcal{O}_{S}^{\oplus r}\otimes \pi_{X}^{*}\mathcal{O}_{X}(-n)&\mathcal{F}\\
\pi_{S}^{*}\mathcal{O}_{S}^{\oplus r}\otimes \pi_{X}^{*}\mathcal{O}_{X}(-n)&\mathcal{F}'.\\};
\path[->]
(m-1-1) edge node [above] {$\psi_{S}$} (m-1-2)
(m-1-1) edge node [left] {$\operatorname{id}_{\mathcal{O}_{X\times S}}$} (m-2-1)
(m-1-2) edge node [right] {$\cong$} (m-2-2)
(m-2-1) edge node [above] {$\psi'_{S}$} (m-2-2);
\end{tikzpicture}
\end{center}
Similar to the way that we treated objects in $\mathcal{B}_{p}$, from now on by objects in $\mathcal{B}^{\textbf{R}}_{p}$ we mean the objects which lie in the image of the natural embedding functor $\mathfrak{F}^{\textbf{R}}:\mathcal{B}^{\textbf{R}}_{p}\rightarrow D(X)$ in Remark \ref{embed-func-BpR}. Moreover by the $S$-flat family of objects in $\mathcal{B}^{\textbf{R}}_{p}$, their morphisms (or isomorphisms) we mean the corresponding definitions as stated in Definition \ref{Bp-R-sflat}. 
\end{defn}
\textbf{Notation}: 
In what follows we define $\mathfrak{M}^{(\beta,r)}_{\mathcal{B}_{p}}$ ($\mathfrak{M}^{(\beta,r)}_{\mathcal{B^{\textbf{R}}}_{p}}$ respectively) to be the moduli functors from $Sch/\mathbb{C}\rightarrow \operatorname{Groupoids}$ which send a $\mathbb{C}$-scheme $S$ to the groupoid of $S$-flat families of objects of type $(\beta,r)$ in $\mathcal{B}_{p}$ ($\mathcal{B}^{\textbf{R}}_{p}$ respectively). \\

We will show that these moduli functors (as groupoid valued functors) are equivalent to algebraic quotient stacks. We will also show that the moduli stack $\mathfrak{M}^{(\beta,r)}_{\mathcal{B}_{p}}$ is given by a stacky quotient of $\mathfrak{M}^{(\beta,r)}_{\mathcal{B}^{\textbf{R}}_{p}}$. 

\subsection{The underlying parameter scheme}\label{scheme-beta}

According to Definition \ref{Bp} an object in the category $\mathcal{A}_{p}$ consists of semistable sheaves with fixed reduced Hilbert polynomial $p$. Note that having fixed a polynomial (in variable $t$) $p(t)$ as the reduced Hilbert polynomial of $F$ means that the Hilbert polynomial of $F$ can yet be chosen as $P_{F}(t)= \frac{k}{d'!} \cdot p(t)$ for different values of $k$  where $d'$ is the dimension of $F$. However here we make an assumption that there are only finitely many possible values $k = 1,\cdots, N$ for which our computation makes sense. We explain the motivation behind this assumption further below.

Our analysis inherits this finiteness property directly from applying \cite[Proposition 13.7]{a30} where the authors show that there are only finitely many nontrivial contributions to the wallcrossing computation which are induced by objects, whose underlying sheaves could only have finitely many fixed Hilbert polynomials. In other words, according to  \cite[Proposition 13.7]{a30}, it suffices to consider Hilbert polynomials $P_{F}(t)= \frac{k}{d'!} \cdot p(t)$ induced by $p$ and only finitely many values of $k=1,2,\cdots, N$ for some $N>0$. 

On the other hand, as discussed in  \cite[Theorem 3.37]{a8}, the family of $\tau$-semistable sheaves $F$ on $X$ such that $F$ has a fixed Hilbert polynomial is bounded. Therefore, the family of coherent sheaves with finitely many fixed Hilbert polynomials is also bounded.  We will use this boundedness property in our construction of parameterizing moduli stacks. 

Now fix the Hilbert polynomial $P_{F}(t)=P$ as above, and use the fact that given a bounded family $\mathcal{F}$ of coherent sheaves with fixed Hilbert polynomial $P$ (here $F$ denotes each member of the family $\mathcal{F}$), there exists an upper bound for their Castelnuvo-Mumford regularity, given by the integer $m$ such that for each member of the family, $F$, the twisted sheaf $F(m')$ is globally generated for all $m'\geq m$. Fix such $m'$ and let $V$ be the complex vector space of \text{\text{dim}}ension $d=P(m')$ given by $V=\operatorname{H}^{0}(F\otimes \mathcal{O}_{X}(m'))$. Twisting the sheaf $F$ by the fixed large enough integer $m'$ would ensure one to get a surjective morphism of coherent sheaves $V\otimes \mathcal{O}_{X}(-m')\rightarrow F$. This defines a closed point $$[V\otimes \mathcal{O}_{X}(-m')\to F]\in \operatorname{Quot}_{P}(V\otimes \mathcal{O}_{X}(-m')),$$where $\operatorname{Quot}_{P}(V\otimes \mathcal{O}_{X}(-m'))$,  known as the Grothendieck's Quot-scheme, is the scheme parametrizing the flat quotients $V\otimes \mathcal{O}_{X}(-m')\to F$ where $F$ has the fixed given Hilbert polynomial $P$. In fact the point $[V\otimes \mathcal{O}_{X}(-m')\to F]$ is contained in an open subset $\mathcal{Q}^{ss}\subset \mathcal{Q}$ of all those quotients $[V\otimes \mathcal{O}_{X}(-m')\to F]$ where $F$ is $\tau$-semistable and the induced map $$H^{0}(V\otimes \mathcal{O}_{X})\rightarrow H^{0}(F(m'))$$ is an isomorphism. For more on construction of Quot schemes  look at \cite[Section 4.3]{a8}.
\begin{defn}\label{scheme-beta-def}
Let $n$ in Definition \ref{Bp-sflat} to be given so that $n\gg m'$. Define $\mathcal{P}$ over $\mathcal{Q}^{ss}$ to be the bundle whose fibers parameterize $\operatorname{H}^{0}(F(n))$. The fibers of the bundle $\mathcal{P}^{\oplus r}$ over each point $[F]=\{p\}$, where $p\in \mathcal{Q}^{ss}$, parameterize $\operatorname{H}^{0}(F(n))^{\oplus r}$.
In other words the fibers of $\mathcal{P}^{\oplus r}$ parameterize the maps $\mathcal{O}_{X}^{\oplus r}(-n)\rightarrow F$ (which define the complexes representing the objects in $\mathcal{B}^{\textbf{R}}_{p}$). 
\end{defn} 
\begin{remark}
Note that here we have used the fact that, by the fixed choices of $n$ and $m'$ satisfying $n\gg m'\geq m$ ($m$ is defined in Section \ref{scheme-beta}) we can see that the dimension of $H^{0}(F(n))$ will remain constant over all points $[V\otimes \mathcal{O}_{X}(-m')\rightarrow F]\in\mathcal{Q}^{ss}$ and therefore, $\mathcal{P}$ remains as a bundle over $\mathcal{Q}^{ss}$.
\end{remark}
There exists a right action of $\operatorname{GL}(V)$ (where $V$ is as above) on  the Quot scheme $\mathcal{Q}$ which induces an action on $\mathcal{Q}^{ss}$, after restriction to the open subscheme of $\tau$-semistable sheaves. It is trivially seen that the action of $\operatorname{GL}(V)$ on $\mathcal{Q}^{ss}$ induces a right action on $\mathcal{P}^{\oplus r}$. Moreover note that, since we have fixed the trivialization of $\mathcal{O}^{\oplus r}_{X}(-n)$ for the objects in $\mathcal{B}^{\textbf{R}}_{p}$, there exists an action of $\operatorname{GL}_{r}(\mathbb{C})$ on $\mathcal{P}^{\oplus r}$ as follows; let $[\mathcal{O}_{X}^{\oplus r}(-n)\xrightarrow{\phi} F]$ be given as a point in $\mathcal{P}^{\oplus r}$. Let $\psi\in \operatorname{GL}_{r}(\mathbb{C})$ be the map given by $\psi: \mathcal{O}_{X}(-n)^{\oplus r}\rightarrow \mathcal{O}_{X}(-n)^{\oplus r}$. The action of $\operatorname{GL}_{r}(\mathbb{C})$ on $\mathcal{P}^{\oplus r}$ is defined via precomposing the sections of $F$ with $\psi$ as shown in the diagram below:
\begin{equation}\label{action}
\begin{tikzpicture}
back line/.style={densely dotted}, 
cross line/.style={preaction={draw=white, -, 
line width=6pt}}] 
\matrix (m) [matrix of math nodes, 
row sep=2em, column sep=3.25em, 
text height=1.5ex, 
text depth=0.25ex]{ 
\mathcal{O}^{\oplus r}_{X}(-n)&\\
\mathcal{O}^{\oplus r}_{X}(-n)&F.\\};
\path[->]
(m-1-1) edge node [left] {$\psi$} (m-2-1)

(m-2-1)  edge node [above]{$\phi$}(m-2-2);
\end{tikzpicture}
\end{equation}
Now use the fact that, by the Grothendieck-Riemann-Roch theorem, fixing a polarization over $X$ and the chern character of sheaves as $\text{ch}(F)=\beta$, induces fixed Hilbert polynomial for such $F$. Therefore from now on, we index our parameterizing moduli stacks by $(\beta,r)$ instead of $(P,r)$. 
\subsection{The Artin stacks $\mathfrak{M}^{(\beta,r)}_{\mathcal{B}_{p}}$ and $\mathfrak{M}^{(\beta,r)}_{\mathcal{B}^{\textbf{R}}_{p}}$.}\label{sec41}
By definitions \ref{Bp-sflat} and \ref{Bp-R-sflat}  the construction of the moduli stack of objects  in $\mathcal{B}_{p}$ and $\mathcal{B}^{\textbf{R}}_{p}$ is done similar to \cite[Section 5]{a39}:

\begin{theorem}\label{theorem81}
Let $\mathcal{P}^{\oplus r}$ be as in Definition \ref{scheme-beta-def}. Then the following statements hold true:
\begin{enumerate}
\item Let $\left[\frac{\mathcal{P}^{\oplus r}}{GL(V)}\right]$ be the stack theoretic quotient of $\mathcal{P}^{\oplus r}$ by $GL(V)$. Then, there exists an isomorphism of stacks $$\mathfrak{M}^{(\beta,r)}_{\mathcal{B^{\textbf{R}}}_{p}}\cong \left[\frac{\mathcal{P}^{\oplus r}}{GL(V)}\right].$$In particular, $\mathfrak{M}^{(\beta,r)}_{\mathcal{B^{\textbf{R}}}_{p}}$ is an Artin stack. 

\item The moduli stack, $\mathfrak{M}^{(\beta,r)}_{\mathcal{B}^{\textbf{R}}_{p}}$, is a $\operatorname{GL}_{r}(\mathbb{C})$-torsor over $\mathfrak{M}^{(\beta,r)}_{\mathcal{B}_{p}}$. 

\item It is true that locally in the flat topology, $\mathfrak{M}^{(\beta,r)}_{\mathcal{B}_{p}}\cong \mathfrak{M}^{(\beta,r)}_{\mathcal{B}^{\textbf{R}}_{p}}\times \left[\frac{\operatorname{Spec}(\mathbb{C})}{\operatorname{GL}_{r}(\mathbb{C})}\right]$. This isomorphism does not hold true globally unless $r=1$.
\end{enumerate}
\end{theorem}
\begin{proof} The proofs of parts (1), (2) are essentially the same as \cite[Proposition 5.5, Corollary 6.4, Theorems 6.2 and Theorem 6.5]{a39}. Now we prove part (3) by showing that there exists a forgetful map $\pi:\mathfrak{M}^{(\beta,r)}_{\mathcal{B}^{\textbf{R}}_{p}}\rightarrow \mathfrak{M}^{(\beta,r)}_{\mathcal{B}_{p}}$ which induces a map from $\mathfrak{M}^{(\beta,r)}_{\mathcal{B}^{\textbf{R}}_{p}}\times \left[\frac{\operatorname{Spec}(\mathbb{C})}{\operatorname{GL}_{r}(\mathbb{C})}\right] $ to $\mathfrak{M}^{(\beta,r)}_{\mathcal{B}_{p}}$ and show that this map has an inverse locally but not globally unless $r=1$. First we prove the claim for $r=1$; 

For $r=1$, $\operatorname{GL}_{1}(\mathbb{C})=\mathbb{G}_{m}$. For a $\mathbb{C}$-scheme $S$, an $S$-point of $\mathfrak{M}^{(\beta,1)}_{\mathcal{B}^{\textbf{R}}_{p}}\times [\frac{\operatorname{Spec}(\mathbb{C})}{\mathbb{G}_{m}}]$ is identified with the data $(\mathcal{O}_{X\times S}(-n)\rightarrow \mathcal{F}, \mathcal{L}_{S})$ where $\mathcal{L}_{S}$ is a $\mathbb{G}_{m}$ line bundle over $S$. Let $\pi_{S}:X\times S\rightarrow S$ be the natural projection onto the second factor. There exists a map that sends this point to an $S$-point $p\in \mathfrak{M}^{(\beta,1)}_{\mathcal{B}_{p}}$ which is obtained by tensoring with $\mathcal{L}_{S}$, i.e $\mathcal{O}_{X}(-n)\boxtimes \mathcal{L}_{S}\xrightarrow{\phi^{\mathcal{L}}} \mathcal{F}\times \pi_{S}^{*}\mathcal{L}_{S}$. Note that tensoring $\mathcal{O}_{X\times S}(-n)$ with $\pi^{*}_{S}\mathcal{L}_{S}$ does not change the fact that $\mathcal{O}_{X\times S}(-n)|_{s\in S}\cong \mathcal{O}_{X}(-n)\boxtimes \mathcal{L}_{S}|_{s\in S}$ fiber by fiber. Moreover, there exists a section map $s: \mathfrak{M}^{(\beta,1)}_{\mathcal{B}_{p}}\rightarrow \mathfrak{M}^{(\beta,1)}_{\mathcal{B}^{\textbf{R}}_{p}}\times [\frac{\operatorname{Spec}(\mathbb{C})}{\mathbb{G}_{m}}]$. Simply take an $S$-point  $[\mathcal{O}_{X}(-n)\boxtimes \mathcal{L}_{S}\rightarrow \mathcal{F}]\in \mathfrak{M}^{(\beta,1)}_{\mathcal{B}_{p}}(S)$  and send it to an $S$-point in $(\mathfrak{M}^{(\beta,1)}_{\mathcal{B}^{\textbf{R}}_{p}}\times [\frac{\operatorname{Spec}(\mathbb{C})}{\mathbb{G}_{m}}])(S)$ by the map $$[\mathcal{O}_{X}(-n)\boxtimes \mathcal{L}_{S}\rightarrow \mathcal{F}]\mapsto([\mathcal{O}_{X\times S}(-n)\rightarrow \mathcal{F}\otimes \pi_{S}^{*}\mathcal{L}_{S}^{-1}], \mathcal{L}_{S}).$$ Note that since $\mathcal{L}_{S}$ is a line bundle over $S$ then it is invertible and hence a section map is well defined globally and $\mathfrak{M}^{(\beta,1)}_{\mathcal{B}_{p}}$ is a $\mathbb{G}_{m}$-gerbe over $\mathfrak{M}^{(\beta,1)}_{\mathcal{B}^{\textbf{R}}_{p}}$.

Now let $r>1$. It is left to show that there exists a map from $\mathfrak{M}^{(\beta,r)}_{\mathcal{B}^{\textbf{R}}_{p}}\times \left[\frac{\operatorname{Spec}(\mathbb{C})}{\operatorname{GL}_{r}(\mathbb{C})}\right] $ to $\mathfrak{M}^{(\beta,r)}_{\mathcal{B}_{p}}$ and this map does not have an inverse (section map) globally. When $r>1$, there exists a forgetful map $\pi:\mathfrak{M}^{(\beta,r)}_{\mathcal{B}^{\textbf{R}}_{p}}\rightarrow \mathfrak{M}^{(\beta,r)}_{\mathcal{B}_{p}}$  which over the $S$-points takes $[\mathcal{O}_{X}(-n)\boxtimes \mathcal{O}_{S}^{\oplus r}\to \mathcal{F}]$ (look at Definition \ref{Bp-R-sflat}) to $[M\boxtimes \mathcal{O}_{X}(-n)\to \mathcal{F}]$ where $M$ is defined in Definition \ref{Bp-sflat}. 

Now start from the family of pairs $M\boxtimes \mathcal{O}_{X}(-n)\to \mathcal{F}$ over $X\times S$. Then, consider the frame bundle $P\rightarrow S$ which is given as a cover of $S$ in the flat topology and observe that, since $M$ is canonically trivialized over $P$, then via the map $u:X\times P\to X\times S$ the pull back of the family, $M\boxtimes \mathcal{O}_{X}(-n)\to \mathcal{F}$, induces a family $$\mathcal{O}_{P}^{\oplus r}\boxtimes \mathcal{O}_{X}(-n)\to u^*{\mathcal{F}}$$ of rigidified objects over $P$ which is seen to induce a section map as in part (3) of the theorem, defined locally but not globally. Note that when $r=1$, as we saw in above $\mathcal{L}_{S}$, is still a locally, but not globally, trivial bundle. However, there we used the invertibility of $\mathcal{L}_{S}$ to get a global section of $GL_{1}(\mathbb{C})$-torsor $\mathfrak{M}^{(\beta,1)}_{\mathcal{B}^{\textbf{R}}_{p}}$ over $\mathfrak{M}^{(\beta,1)}_{\mathcal{B}_{p}}$.  
\end{proof}
\begin{defn}
Define $\mathfrak{M}^{(\beta,r)}_{\mathcal{B}_{p},ss}(\tilde{\tau})$ and $\mathfrak{M}^{(\beta,r)}_{\mathcal{B}_{p},ss}(\tau^{\bullet})$ as substacks of $\mathfrak{M}^{(\beta,r)}_{\mathcal{B}_{p}}$ parameterizing $\tilde{\tau}$-semistable and $\tau^\bullet$-semistable objects in $\mathcal{B}_{p}$ respectively. Since $\mathfrak{M}^{(\beta,r)}_{\mathcal{B}_{p}}$ is of finite type by \cite[Lemma 13.2]{a30} , then $\mathfrak{M}^{(\beta,r)}_{\mathcal{B}_{p},ss}(\tilde{\tau})$ and $\mathfrak{M}^{(\beta,r)}_{\mathcal{B}_{p},ss}(\tau^{\bullet})$ are of finite type for all $(\beta,r)\in \mathcal{C}(\mathcal{B}_{p})$. 
\end{defn}

Now we describe some of the properties of $\mathfrak{M}^{(\beta,r)}_{\mathcal{B}_{p},ss}(\tau^{\bullet})$ in the following proposition.
\begin{prop}\label{stackprop}
The following statements hold true over $\mathfrak{M}^{(\beta,r)}_{\mathcal{B}_{p},ss}(\tau^{\bullet})$.
\begin{enumerate}
\item $\forall (\beta,d)\in C(\mathcal{B}_{p})$ we have natural stack isomorphisms $\mathfrak{M}^{(\beta,0)}_{\mathcal{B}_{p},ss}(\tau^{\bullet})\cong \mathfrak{M}^{\beta}_{ss}(\tau)$ ($\tau$ stands for Gieseker stability condition and $\mathfrak{M}^{\beta}_{ss}(\tau)$ stands for moduli stack of Gieseker semistable coherent sheaves with class $\beta$.) which is obtained by identifying $(F,0,0)$ with $F$. 
\item $\mathfrak{M}^{(0,1)}_{\mathcal{B}_{p},ss}(\tau^{\bullet})\cong [Spec (\mathbb{C})/\mathbb{G}_{m}]$ with the unique point given by $(0,\mathbb{C},0)$. 
\item $\mathfrak{M}^{(\beta,2)}_{\mathcal{B}_{p},ss}(\tau^{\bullet})=\varnothing$ for $\beta\neq 0$. Similarly, $\mathfrak{M}^{(\beta,1)}_{\mathcal{B}_{p},ss}(\tau^{\bullet})=\varnothing$ for $\beta\neq 0$.\\
\item $\mathfrak{M}^{(0,2)}_{\mathcal{B}_{p},ss}(\tau^{\bullet})\cong [\operatorname{Spec}(\mathbb{C})/\operatorname{GL}_{2}(\mathbb{C})]$ with the unique point given by $(0,\mathbb{C}^{2},0)$.
\end{enumerate}
\end{prop}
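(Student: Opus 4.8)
The plan is to determine explicitly which objects of $\mathcal{B}_{p}$ are $\tau^{\bullet}$-semistable, using only the values of $\tau^{\bullet}$ from Definition \ref{weak}, the semistability inequality of Definition \ref{weakjoyce}, and the canonical short exact sequence \eqref{coreexact}, and then to read off the moduli stacks from the description of $S$-flat families in Definition \ref{Bp-sflat} (equivalently, from the quotient presentation obtained by replacing $\tilde{\tau}$ by $\tau^{\bullet}$ in Theorem \ref{theorem81}). The one structural fact used throughout is that a subobject of $(F,V,\phi)$ in $\mathcal{B}_{p}$ is a pair $(F',V',\phi')$ with $F'\subseteq F$ in $\mathcal{A}_{p}$ and $V'\subseteq V$; in particular when $V=0$ every subobject has the form $(F',0,0)$, and $\tau^{\bullet}$ of any object of type $(\beta',0)$ is $0$ while $\tau^{\bullet}$ of any object with positive vector-space dimension is $-1$.

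For part (a): an $S$-flat family of objects of type $(\beta,0)$ has vector-bundle part of rank $0$, hence zero, so the datum is simply an $S$-flat family $\mathcal{F}$ of Gieseker-semistable sheaves with $\operatorname{Ch}=\beta$; and for an object $(F,0,0)$ both any nonzero subobject $(F',0,0)$ and the corresponding quotient again lie in $\mathcal{A}_{p}$ and have $\tau^{\bullet}$-value $0$, so the inequality $\tau^{\bullet}([S])\le\tau^{\bullet}([E/S])$ holds automatically and $\tau^{\bullet}$-semistability adds nothing to membership in $\mathcal{A}_{p}$. This yields the functorial identification $(F,0,0)\leftrightarrow F$ and hence $\mathfrak{M}^{(\beta,0)}_{ss,\mathcal{B}_{p}}(\tau^{\bullet})\cong\mathfrak{M}^{\beta}_{ss}(\tau)$. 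For the $(0,1)$ claim: the only object of type $(0,1)$ is $(0,\mathbb{C},0)$, since $F=0$ is forced and the only map $\mathbb{C}\to\operatorname{Hom}(\mathcal{O}_{X}(-n),0)=0$ is zero; it has no nonzero proper subobject, so it is trivially $\tau^{\bullet}$-(semi)stable, and its automorphism group in $\mathcal{B}_{p}$ is $\operatorname{GL}_{1}(\mathbb{C})=\mathbb{G}_{m}$ acting on $V$ (the sheaf part forcing $f=\operatorname{id}$). An $S$-flat family of this type is a line bundle on $S$, so $\mathfrak{M}^{(0,1)}_{ss,\mathcal{B}_{p}}(\tau^{\bullet})\cong[\operatorname{Spec}(\mathbb{C})/\mathbb{G}_{m}]$.

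For the emptiness $\mathfrak{M}^{(\beta,2)}_{ss,\mathcal{B}_{p}}(\tau^{\bullet})=\varnothing$ when $\beta\neq0$: any such object $(F,V,\phi)$ has $F\in\mathcal{A}_{p}$ nonzero and $\dim V=2>0$, so \eqref{coreexact} exhibits the subobject $(F,0,0)$ with $\tau^{\bullet}([(F,0,0)])=0$ and quotient $(0,V,0)$ with $\tau^{\bullet}([(0,V,0)])=-1$; since $0>-1$, the semistability inequality fails, the object is $\tau^{\bullet}$-unstable, and there is nothing to parametrize (the same computation shows more generally that any $(F,V,\phi)$ with $F\neq0$ and $\dim V>0$ is $\tau^{\bullet}$-unstable). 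For part (b): the only object of type $(0,2)$ is $(0,\mathbb{C}^{2},0)$; its nonzero proper subobjects are exactly $(0,V',0)$ with $\dim V'=1$, and both $(0,V',0)$ and the quotient $(0,\mathbb{C}^{2}/V',0)$ have $\tau^{\bullet}$-value $-1$, so $-1\le-1$ holds and $(0,\mathbb{C}^{2},0)$ is $\tau^{\bullet}$-semistable. Its automorphism group is $\operatorname{GL}_{2}(\mathbb{C})$ acting on $V=\mathbb{C}^{2}$, and an $S$-flat family of type $(0,2)$ is a rank-$2$ bundle on $S$; hence $\mathfrak{M}^{(0,2)}_{ss,\mathcal{B}_{p}}(\tau^{\bullet})\cong[\operatorname{Spec}(\mathbb{C})/\operatorname{GL}_{2}(\mathbb{C})]$. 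Alternatively, all four identifications can be extracted from the quotient presentation $[\mathfrak{S}^{(\beta,r)}_{ss}(\tau^{\bullet})/(\operatorname{GL}_{r}(\mathbb{C})\times\operatorname{GL}(V))]$ furnished by the $\tau^{\bullet}$-analogue of Theorem \ref{theorem81}, noting that for $\beta=0$ the relevant Quot scheme collapses to a point and $V=0$.

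The mathematical content is entirely elementary, so the only step requiring care is the passage from these pointwise assertions to genuine isomorphisms of Artin stacks: one must verify, using Definition \ref{Bp-sflat} and the construction behind Theorem \ref{theorem81}, that the $\tau^{\bullet}$-semistable locus is open and that there are no hidden moduli in the $V$-direction beyond the choice of a rank-$d$ bundle on the base. I expect this bookkeeping to be the only mildly delicate point; everything else is forced by the definition of $\tau^{\bullet}$ and the exact sequence \eqref{coreexact}.
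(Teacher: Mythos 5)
Your proof is correct and follows essentially the same route as the paper: the emptiness of $\mathfrak{M}^{(\beta,2)}_{ss,\mathcal{B}_{p}}(\tau^{\bullet})$ for $\beta\neq 0$ via the destabilizing subobject $(F,0,0)$ in the sequence \eqref{coreexact} with $\tau^{\bullet}(F,0,0)=0>-1=\tau^{\bullet}(0,V,0)$, and part (b) via the uniqueness of $(0,\mathbb{C}^{2},0)$, the check that its only nonzero proper subobjects $(0,V',0)$ satisfy $-1\leq-1$, and the identification of its automorphism group with $\operatorname{GL}_{2}(\mathbb{C})$. The only difference is that you supply direct elementary arguments for the first two claims of (a), which the paper simply cites to Joyce--Song (Proposition 15.6), and you are somewhat more explicit than the paper about upgrading the pointwise statements to stack isomorphisms via the family descriptions.
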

\begin{proof} The parts (1) and (2) of Proposition \ref{stackprop} are proved in \cite[Proposition 15.6]{a30}. We start by proving part (3). We know that every object $[(F,V,\phi)]=(\beta,2)$ fits in a short exact sequence $$0\rightarrow (F,0,0)\rightarrow (F,V,\phi)\rightarrow (0,V,0)\rightarrow 0,$$ here $[(F,0,0)]=(\beta,0)$ and $[(0,V,0)]=(0,2)$. By Definition \ref{weak} $\tau^{\bullet}(F,0,0)=0>\tau^{\bullet}(0,V,0)=-1$ therefore $(F,0,0)$ $\tau^{\bullet}$-destabilizes $(F,V,\phi)$ for all $[(F,V,\phi)]=(\beta,2)$ and this finishes the proof of part (3). The proof of second part of (3) follows the same strategy and is given in \cite[Proposition 13.6]{a30}. \\
Now we prove part (4); Note that $(0,\mathbb{C}^{2},0)$ is a unique point in $\mathfrak{M}^{(0,2)}_{\mathcal{B}_{p},ss}(\tau^{\bullet})$ which is made of two copies of  $(0,\mathbb{C},0)$ which is the unique object in $\mathfrak{M}^{(0,1)}_{\mathcal{B}_{p},ss}(\tau^{\bullet})$. Moreover, the only nonzero sub-object that can destabilize $(0,\mathbb{C}^{2},0)$ is $(0,\mathbb{C},0)$. There exists a short exact sequence:
\begin{equation}
0\rightarrow (0,\mathbb{C},0)\rightarrow (0,\mathbb{C}^{2},0)\rightarrow (0,\mathbb{C},0)\rightarrow 0.
\end{equation} 
It is easily seen that $\tau^{\bullet}(0,\mathbb{C},0)=\tau^{\bullet}(0,\mathbb{C}^{2},0)=-1$ and therefore the sub-object $(0,\mathbb{C},0)$ does not destabilize $(0,\mathbb{C}^{2},0)$ and $(0,\mathbb{C}^{2},0)$ is weak $\tau^{\bullet}$-semistable. Since the automorphisms of $(0,\mathbb{C}^{2},0)$ are given by $\operatorname{GL}_{2}(\mathbb{C})$ then $\mathfrak{M}^{(0,2)}_{\mathcal{B}_{p},ss}(\tau^{\bullet})\cong [\operatorname{Spec}(\mathbb{C})\slash\operatorname{GL}_{2}(\mathbb{C})]$.
\end{proof}

\section{Stack function identities in the Ringel-Hall algebra}
We review here some basic facts about stack functions in Ringel-Hall algebras. Let $\mathfrak{M}$ be an Artin $\mathbb{C}$-stack with affine geometric stabilizers.  Consider pairs $(\mathfrak{R}, \rho)$ where $\mathfrak{R}$ is given by a finite type Artin $\mathbb{C}$-stack with affine geometric stabilizers and $\rho:=\mathfrak{R}\rightarrow\mathfrak{M}$ is a 1-morphism. Now define an equivalence relation for such pairs where $(\mathfrak{R}, \rho)$ and $(\mathfrak{R}', \rho')$ are called equivalent if there exists a 1-morphism $\iota: \mathfrak{R}\to \mathfrak{R}'$ such that $\rho'\circ \iota$ and $\rho$ are 2-isomorphic 1-morphisms $\mathfrak{R}\to \mathfrak{M}$. Joyce and Song in \cite[Section 2.2]{a30} define the space of stack functions $\underline{\operatorname{S}\overline{\operatorname{F}}}(\mathfrak{M},\chi,\mathbb{Q})$ as the $\mathbb{Q}$-vector space  generated by the above equivalence classes of pairs $[(\mathfrak{R},\rho)]$ such that the following relations are imposed:

\begin{enumerate}
\item Given a closed substack $(\mathfrak{G},\rho|_{\mathfrak{G}})\subset (\mathfrak{R},\rho)$ we have $$[(\mathfrak{R},\rho)]=[(\mathfrak{G},\rho|_{\mathfrak{G}})]+[(\mathfrak{R}/\mathfrak{G},\rho|_{\mathfrak{R}/\mathfrak{G}})]$$ 
\item Let $\mathfrak{R}$ be a $\mathbb{C}$-stack of finite type with affine geometric stabilizers and let $\mathcal{U}$ denote a quasi-projective $\mathbb{C}$-variety and $\pi_{\mathfrak{R}}:\mathfrak{R}\times U\rightarrow \mathfrak{R}$ the natural projection and $\rho:\mathfrak{R}\rightarrow \mathfrak{M}$ a 1-morphism. Then $[(\mathfrak{R}\times \mathcal{U}, \rho \circ \pi_{\mathfrak{R}})]=\chi([\mathcal{U}])[(\mathfrak{R},\rho)]$.
\item Assume $\mathfrak{R}\cong [X/G]$ where $X$ is a quasiprojective $\mathbb{C}$-variety and $G$ a very special algebraic $\mathbb{C}$-group acting on $X$ with maximal torus $T^{G}$, then we have
\begin{equation*}
[(\mathfrak{R},\rho)]=\sum_{Q\in \mathcal{Q}(G,T^{G})}F(G,T^{G},Q)[([X/Q],\rho\circ \iota^{Q})],
\end{equation*}
where the rational coefficients $F(G,T^{G},Q)$ have a complicated definition explained in \cite[Section 6.2]{a33}. 
Here $\mathcal{Q}(G,T^{G})$ is the set of closed $\mathbb{C}$-subgroups $Q$ of $T^{G}$ such that $Q=T^{G}\cap C_{G}(Q)$ where $C_{G}(Q)=\{g\in G:sg=gs \,\,\text{for all} \,\,s\in Q\}$ and $\iota^{Q}:[X/Q]\rightarrow\mathfrak{R}\cong [X/G]$ is the natural projection 1-morphism. Similarly, one defines $\overline{\operatorname{SF}}(\mathfrak{M},\chi,\mathbb{Q})$ by restricting the 1-morphisms $\rho$ in part 1, 2, 3 to be representable.
\item There exist the notions of multiplication, pullback, pushforward of stack functions in  $\underline{\operatorname{S}\overline{\operatorname{F}}}(\mathfrak{M},\chi,\mathbb{Q})$ and $\overline{\operatorname{SF}}(\mathfrak{M},\chi,\mathbb{Q})$. For further discussions look at (Joyce and Song) \cite{a30} (Definitions 2.6, 2.7) and (Theorem 2.9). 
\item Joyce and Song in \cite{a30} (Section 13.3) define the notion of characteristic stack functions  $\overline{\delta}^{(\beta,d)}_{ss}(\tilde{\tau})\in \overline{\operatorname{SF}}(\mathfrak{M}_{\mathcal{B}_{p}}(\tilde{\tau}),\chi,\mathbb{Q})$ and $\overline{\delta}^{(\beta,d)}_{ss}(\tau^{\bullet})\in \overline{\operatorname{SF}}(\mathfrak{M}_{\mathcal{B}_{p}}(\tau^{\bullet}),\chi,\mathbb{Q})$. Moreover, in the instance where the moduli stack contains strictly semistable objects, the authors define the ``\textit{logarithm}" of the moduli stack by the stack function $\overline{\epsilon}^{(\beta,d)}(\tilde{\tau})$ given as an element of the Hall-algebra of stack functions supported over virtual indecomposables.
\end{enumerate}
To continue we state the wallcrossing formula under change of stability condition from $(\tau^{\bullet}, T^{\bullet}, \leq)$ to $(\tilde{\tau}, \tilde{T}, \leq)$; 
\begin{prop}
\cite[Proposition 13.7]{a30}. For all $(\beta,d)$ in $C(\mathcal{B}_{p})$, the following identity holds in the Ringel-Hall algebra of $\mathcal{B}_{p}$:
\begin{align}\label{eqU2}
&
\bar{\epsilon}^{(\beta,d)}(\tilde{\tau})=
\sum_{n\geq 1}\sum_{\begin{subarray}{1}((\beta_{1},d_{1}),\cdots,(\beta_{n},d_{n}))\in \mathcal{C}(\mathcal{B}_{p})^{n}:\\ (\beta_{1},d_{1})+\cdots+(\beta_{n},d_{n})=(\beta,d)\end{subarray}} U\bigg((\beta_{1},d_{1}),\cdots (\beta_{n},d_{n});\tau^{\bullet},\tilde{\tau}\bigg)\notag\\
&
\cdot \bar{\epsilon}^{(\beta_{1},d_{1})}(\tau^{\bullet})*\cdots \cdots*\bar{\epsilon}^{(\beta_{n},d_{n})}(\tau^{\bullet}).\notag\\
\end{align} 
There are only finitely many choices of $n\geq 1$ as well as $(\beta_{i},d_{i})\in C(\mathcal{B}_{p})$ for which the coefficients $U\bigg((\beta_{1},d_{1}),\cdots (\beta_{n},d_{n});\tau^{\bullet},\tilde{\tau}\bigg)$ do not vanish.
\end{prop}
 Now we recall the definition of the function $U$ in Equation \eqref{eqU2} from \cite[Definition 3.8]{a30};
\begin{defn} \label{SBp}
Let $n\geq 1$ and $$(\beta_{1},d_{1}),\cdots, (\beta_{n},d_{n})\in C(\mathcal{B}_{p}).$$We define a number, $S((\beta_{1},d_{1}),\cdots,(\beta_{n},d_{n});\tau^{\bullet},\tilde{\tau})$ as follows: If for all $i=1,\cdots,n-1$ we have either: 
\begin{enumerate} [(a)]
\item $\tau^{\bullet}(\beta_{i},d_{i})\leq \tau^{\bullet}(\beta_{i+1},d_{i+1})$ and $$\tilde{\tau}((\beta_{1},d_{1})+\cdots+(\beta_{i},d_{i}))>\tilde{\tau}((\beta_{i+1},d_{i+1})+\cdots+(\beta_{n},d_{n})).$$ or\\
\item $\tau^{\bullet}(\beta_{i},d_{i})> \tau^{\bullet}(\beta_{i+1},d_{i+1})$ and $$\tilde{\tau}((\beta_{1},d_{1})+\cdots+(\beta_{i},d_{i}))\leq\tilde{\tau}((\beta_{i+1},d_{i+1})+\cdots+(\beta_{n},d_{n})),$$
\end{enumerate}
then define $S((\beta_{1},d_{1}),\cdots,(\beta_{n},d_{n});\tau^{\bullet},\tilde{\tau})=(-1)^{r}$, where $$r=\#\{i\in \{1,\cdots, n-1\}| (a)\,\, \text{holds}\},$$ otherwise if for all $i=1,\cdots,n-1$, neither (a) nor (b) is true, then set $S=0$.
Given $n\geq 1$ and $(\beta_{1},d_{1}),\cdots,(\beta_{n},d_{n})$ as above, choose two numbers $l$ and $m$ such that $1\leq l\leq m\leq n$. Now for this choice choose numbers $0=a_{0}<a_{1}<\cdots<a_{m}=n$ and $0=b_{0}<b_{1}<\cdots<b_{l}=m$. Given such $m$ and $a_{1},\cdots,a_{m}$, define elements $\theta_{1},\cdots,\theta_{m}\in C(\mathcal{B}_{p})$ by $\theta_{i}=(\beta_{a_{i-1}+1},d_{a_{i-1}+1})+\cdots +(\beta_{a_{i}},d_{a_{i}})$ (to add two pairs just add them coordinate-wise in $C(\mathcal{B}_{p})$). Also given such $l,b_{1},\cdots,b_{l}$ define elements $\gamma_{1},\cdots,\gamma_{l}\in C(\mathcal{B}_{p})$ by $\gamma_{i}=\theta_{b_{i-1}+1}+\cdots \theta_{b_{i}}$. Let $\Lambda$ denote the set of choices $(l,m,a_{1},\cdots,a_{m},b_{1},\cdots,b_{l})$ for which the two following conditions are satisfied:
\begin{enumerate}
\item $\tau^{\bullet}(\theta_{i})=\tau^{\bullet}(\beta_{j},d_{j})$ for $i=1,\cdots, m$ and $a_{i-1}<j\leq a_{i}$.\\
\item $\tilde{\tau}(\gamma_{i})=\tilde{\tau}(\beta,d)$ for $i=1,\cdots l$ (here $\beta=\displaystyle{\sum}_{i=1}^{n}\beta_{i}$ and $d=\displaystyle{\sum}_{i=1}^{n}d_{i}$). 
\end{enumerate}
Now define:
\begin{align} \label{eqU3}
&
U\bigg((\beta_{1},d_{1}),\cdots,(\beta_{n},d_{n});\tau^{\bullet},\tilde{\tau}\bigg)=\notag\\
&
\sum_{\Lambda}\frac{(-1)^{l-1}}{l}\prod^{l}_{i=1}S(\theta_{b_{i-1}+1},\theta_{b_{i-1}+2},\cdots, \theta_{b_{i}};\tau^{\bullet},\tilde{\tau})\cdot \prod^{m}_{i=1}\frac{1}{(a_{i}-a_{i-1})!}.
\end{align}
\end{defn}
\section{Wallcrossing computations for objects with class $(\beta,2)$ in $\mathcal{B}_{p}$}
Our main goal here is to compute the wall-crossing identity for the invariants of objects of type $(\beta,2)$ in $\mathcal{B}_{p}$ by changing the weak stability condition from $\tau^{\bullet}$ to $\tilde{\tau}$. First we compute the Hall algebra element $\bar{\epsilon}^{(\beta,2)}(\tilde{\tau})$ on the left hand side of \eqref{eqU2} with respect to the product of $\bar{\epsilon}^{(\beta_{i},d_{i})}(\tau^{\bullet})$'s appearing on the right hand side. Note that the sum on the right hand side of \eqref{eqU2} is over all possible decompositions of the class $(\beta,2)$ into irreducible classes $(\beta_{i},d_{i})$. We will also need to compute the combinatorial coefficient $U\bigg((\beta_{1},d_{1}),\cdots,(\beta_{n},d_{n});\tau^{\bullet},\tilde{\tau}\bigg)$ for our calculation.

We decompose the class $(\beta,2)$ into irreducible classes. First, decompose $d=2$ and then decompose $\beta$. The only two possible ways to break $d=2$ is to write $2=2+0$ and $2=1+1$. Now for each choice of decomposition of $d$,  one decomposes $\beta$ into irreducible classes $\beta_{i}$. For example for the case $2=2+0$, the decomposition of $\beta$ produces elements in  $C(\mathcal{B}_{p})$ of type $(\beta_{1},d_{1}),\cdots, (\beta_{n},d_{n})$, where $\beta_{1}+\cdots+\beta_{n}=\beta$ and $d_{1}+\cdots+d_{n}=2$, hence there exists a tuple in this sequence which is of type $(\beta_{i},2)$ and the remaining objects are of type $(\beta_{j},0)$. Now use Proposition \ref{stackprop} and note that $\mathfrak{M}^{(\beta_{i},2)}_{\mathcal{B}_{p},ss}(\tau^{\bullet})=\varnothing$, unless $\beta_{i}=0$. Hence, the corresponding sequence of numerical classes is given as $$(\beta_{1},0),\cdots,(0,2),\cdots,(\beta_{n},0).$$ Similarly for the decomposition of type $2=1+1$, and using Proposition \ref{stackprop}, one obtains elements of type $$(\beta_{1},0),\cdots,(\beta_{k-1},0),(0,1),\cdots,(\beta_{t-1},0),(0,1),\cdots,(\beta_{n},0)\,\,\, \text{for}\,\,\, 1\leq k< t\leq n,$$where the two $(0,1)$ elements (one in the $k$'th location and the other in $t$'th location), float in between the elements of type $(\beta_{i},0)$ in the sequence.
In order to ease the bookkeeping, we use a reparametrization of $(\beta_{i},d_{i})$ which is consistent with the work of Joyce and Song. For a decomposition $2=2+0$ define $(\psi_{i},d_{i})=(\beta_{i},0)$ for $i\leq k-1$,  and $(\psi_{i},d_{i})=(\beta_{i+1},0)$ for $i\geq k$. For decomposition of type $2=1+1$ define $(\psi_{i},d_{i})=(\beta_{i},0)$ for $i\leq k-1$, $(\psi_{i},d_{i})=(\beta_{i+1},0)$ for $k\leq i \leq t-2$ and $(\psi_{i},d_{i})=(\beta_{i+2},0)$ for $i\geq t-1$.
\begin{defn}
\begin{enumerate}
\item Fix some $k$ such that $1\leq k \leq n$ ($k$ shows the location of $(0,2)$ element). Given a sequence of numerical classes in $C(\mathcal{B}_{p})$:$$(\psi_{1},0),\cdots (\psi_{k-1},0),(0,2),(\psi_{k},0), \cdots, (\psi_{n-1},0),$$define $$U_{k}=U\bigg((\psi_{1},0),\cdots (\psi_{k-1},0),(0,2),(\psi_{k},0), \cdots, (\psi_{n-1},0);\tau^{\bullet},\tilde{\tau}\bigg).$$
\item Similarly, fix some $k,t$ such that $1\leq k< t\leq n$ ($k,t$ show the location of first and second $(0,1)$ elements in the sequence). Given a sequence $$(\psi_{1},0),\cdots,(\psi_{k-1},0),(0,1),(\psi_{k},0),\cdots,(\psi_{t-2},0),(0,1),(\psi_{t-1},0)\cdots, (\psi_{n-2},0)$$define 
\begin{align*}
&
U_{k,t}=
U\bigg((\psi_{1},0),\cdots,(\psi_{k-1},0),(0,1),(\psi_{k},0),\cdots,(\psi_{t-2},0),(0,1),(\psi_{t-1},0),\cdots\notag\\
&
\cdots, (\psi_{n-2},0);\tau^{\bullet},\tilde{\tau}\bigg)
\end{align*}
\end{enumerate}
\end{defn}
Now Equation \eqref{eqU2} for the case of $(\beta,2)$ is written as: 
\begin{align}\label{b,2}
&
\bar{\epsilon}^{(\beta,2)}(\tilde{\tau})=\Bigg[\sum_{\begin{subarray}{1}1\leq k \leq n\\ \psi_{1},\cdots, \psi_{n-1}\in C(\mathcal{A}_{p})\\ \psi_{1}+\cdots+\psi_{n-1}=\beta \end{subarray}} U_{k}\cdot\bar{\epsilon}^{(\psi_{1},0)}(\tau^{\bullet})*\cdots *\bar{\epsilon}^{(\psi_{k-1},0)}(\tau^{\bullet})*\bar{\epsilon}^{(0,2)}(\tau^{\bullet})*\bar{\epsilon}^{(\psi_{k},0)}(\tau^{\bullet})*\cdots*\notag\\
&
\bar{\epsilon}^{(\psi_{n-1},0)}(\tau^{\bullet})\Bigg]+\Bigg[\sum_{\begin{subarray}{1}1\leq k< t \leq n\\ \psi_{1},\cdots, \psi_{n-2}\in C(\mathcal{A}_{p})\\ \psi_{1}+\cdots+\psi_{n-2}=\beta\end{subarray}} U_{k,t}\cdot \bar{\epsilon}^{(\psi_{1},0)}(\tau^{\bullet})*\cdots *\bar{\epsilon}^{(\psi_{k-1},0)}(\tau^{\bullet})*\bar{\epsilon}^{(0,1)}(\tau^{\bullet})*\bar{\epsilon}^{(\psi_{k},0)}(\tau^{\bullet})\notag\\
&
*\cdots*\bar{\epsilon}^{(\psi_{t-2},0)}(\tau^{\bullet})*\bar{\epsilon}^{(0,1)}(\tau^{\bullet})*\bar{\epsilon}^{(\psi_{t-1},0)}(\tau^{\bullet})*\cdots *\bar{\epsilon}^{(\psi_{n-2},0)}(\tau^{\bullet})\Bigg]\notag\\
\end{align}
Let $\textbf{E}_{1}$ and $\textbf{E}_{2}$ respectively denote the first and second brackets on the right hand side of \eqref{b,2}. 
\subsection{Computation of $\textbf{E}_{1}$}
By \eqref{b,2} and \eqref{eqU3} $U_{k}$ is given by:
\begin{align}\label{UA1}
&
U_{k}=U\bigg((\psi_{1},0),\cdots (\psi_{k-1},0),(0,2),(\psi_{k},0), \cdots, (\psi_{n-1},0);\tau^{\bullet},\tilde{\tau}\bigg)=\notag\\
&
\sum_{\Lambda} \frac{(-1)^{l-1}}{l}\cdot \prod^{l}_{i=1}S_{\textbf{E}_{1}}(\theta_{b_{i-1}+1},\theta_{b_{i-1}+2},\cdots \theta_{b_{i}};\tau^{\bullet},\tilde{\tau})\cdot\prod^{m}_{i=1}\frac{1}{(a_{i}-a_{i-1})!}.\notag\\
\end{align}
\begin{remark}
Here by notation $S_{\textbf{E}_{1}}(\theta_{b_{i-1}+1},\theta_{b_{i-1}+2},\cdots \theta_{b_{i}};\tau^{\bullet},\tilde{\tau})$ in Equation \eqref{UA1} we mean the $S$ function defined in Definition \ref{SBp} for the specific configuration of elements appearing in $\textbf{E}_{1}$. Similarly later in Equation \eqref{b,k,m}, we will use $S_{\textbf{E}_{2}}(\theta_{b_{i-1}+1},\theta_{b_{i-1}+2},\cdots \theta_{b_{i}};\tau^{\bullet},\tilde{\tau})$ to denote the $S$ function for configuration of elements appearing in $\textbf{E}_{2}$. 
\end{remark}
Here we compute $U_{k}$. Apply Definition \ref{SBp} and obtain the following conditions:
\begin{enumerate}
\item In order to have $\tilde{\tau}(\gamma_{i})=\tilde{\tau}(\beta,2)$ for all $i=1,\cdots, l$ one should set  $l=1$, \cite{a30} (Proposition 15.8). Therefore the set $\Lambda$ reduces to the set of choices of $m$ where $1\leq m\leq n$. 
\item It is clear that the only way that $\tau^{\bullet}(\theta_{i})=\tau^{\bullet}(\beta_{j},d_{j})$ for $i=1,\cdots, m$ and $a_{i-1}<j\leq a_{i}$ is that there exists some $p\in \{1,\cdots, m\}$ where $a_{p-1}=k-1$ and $a_{p}=k$ ($k=$location of $(0,2)$).
\end{enumerate}
In \eqref{UA1} $\tau^{\bullet}(\theta_{i})=0$ for $i<p$ and $\tau^{\bullet}(\theta_{p})=-1$ and $\tau^{\bullet}(\theta_{i})=0$ for $i> p$, therefore the following hold true:
\begin{enumerate}
\item $\tau^{\bullet}(\theta_{i})=\tau^{\bullet}(\theta_{i+1})=0$ and $\tilde{\tau}(\theta_{1}+\cdots+\theta_{i})\ngtr\tilde{\tau}(\theta_{i+1}+\cdots+\theta_{n})$ for $i< p-1$\\
\item $0=\tau^{\bullet}(\theta_{i})> \tau^{\bullet}(\theta_{i+1})=-1$ and $0=\tilde{\tau}(\theta_{1}+\cdots+\theta_{i})\leq \tilde{\tau}(\theta_{i+1}+\cdots+\theta_{n})=1$ for $i=p-1$\\
\item $\tau^{\bullet}(\theta_{i})\leq\tau^{\bullet}(\theta_{i+1})$ and $\tilde{\tau}(\theta_{1}+\cdots+\theta_{i})>\tilde{\tau}(\theta_{i+1}+\cdots+\theta_{n})$ for $i\geq p$
\end{enumerate}
From this analysis one concludes that in \eqref{UA1} for $i<p-1$ neither condition (a) nor (b) are satisfied, for $i=p-1$ condition (b) is satisfied, and for $i\geq p$ condition (a) is satisfied (this implies $p=1$ or $p=2$). 
Moreover, $p=1$ when $k=1$ and $p>1$ when $k>1$ and $S_{\textbf{E}_{1}}=0$ for $p>2$. By the above computations when $p=1$ we have$$(U_{k})|_{p=1}=\sum_{\substack{1\leq m\leq n,\\1=a_{1}<a_{2}<\cdots<a_{m}}}(-1)^{m-1}\cdot \prod_{i=2}^{m}\frac{1}{(a_{i}-a_{i-1})!},$$and for $p=2$ and each fixed $k$ such that $1<k\leq n$ we have$$(U_{k})|_{p=2}=\frac{1}{(k-1)!}\sum_{\begin{subarray}{1}1\leq m\leq n\\k=a_{2}<a_{3}<\cdots<a_{m}=n\end{subarray}}(-1)^{m-2}\cdot \prod_{i=3}^{m}\frac{1}{(a_{i}-a_{i-1})!}.$$ 
Now we can compute $\textbf{E}_{1}$ as follows:
\begin{align}\label{A}
\textbf{E}_{1}=&\sum_{\substack{1\leq m\leq n,\\1=a_{1}<a_{2}<\cdots<a_{m}=n\\\\ \psi_{1},\cdots, \psi_{n-1}\in C(\mathcal{A}_{p})\\ \psi_{1}+\cdots+\psi_{n-1}=\beta}}(-1)^{m-1}\cdot \prod_{i=2}^{m}\frac{1}{(a_{i}-a_{i-1})!}\cdot\bar{\epsilon}^{(0,2)}*\bar{\epsilon}^{(\psi_{2},0)}*\cdots*\bar{\epsilon}^{(\psi_{n-1},0)}\notag\\
&
+\sum_{\begin{subarray}{1}1<k\leq n\\\\ \psi_{1},\cdots, \psi_{n-1}\in C(\mathcal{A}_{p})\\ \psi_{1}+\cdots+\psi_{n-1}=\beta\end{subarray}}\frac{1}{(k-1)!}\cdot \sum_{1\leq m\leq n,k=a_{2}<a_{3}<\cdots<a_{m}=n}(-1)^{m-2}\cdot \prod_{i=3}^{m}\frac{1}{(a_{i}-a_{i-1})!}\notag\\
&
\cdot\bar{\epsilon}^{(\psi_{1},0)}*\cdots *\bar{\epsilon}^{(\psi_{k-1},0)}*\bar{\epsilon}^{(0,2)}*\bar{\epsilon}^{(\psi_{k},0)}*\cdots*\bar{\epsilon}^{(\psi_{n-1},0)}\notag\\
\end{align}
\subsection{Computation of $\textbf{E}_{2}$}\label{sec9}
By Equations \eqref{b,2} and \eqref{eqU3}, $U_{k,t}$ is given as 
\begin{align}\label{b,k,m}
&
 U_{k,t}=\sum_{1\leq l \leq m\leq n} \frac{(-1)^{l-1}}{l}\cdot \prod^{l}_{i=1}S_{\textbf{E}_{2}}(\theta_{b_{i-1}+1},\theta_{b_{i-1}+2},\cdots \theta_{b_{i}};\tau^{\bullet},\tilde{\tau})\cdot\prod^{m}_{i=1}\frac{1}{(a_{i}-a_{i-1})!}\notag\\
\end{align}
\begin{lemma}\label{UB}
Consider the notation in Equation \eqref{b,k,m}. Then $U_{k,t}=0.$
\end{lemma}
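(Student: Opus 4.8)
The plan is to show that $U_{k,m}=0$ by a careful sign-cancellation argument on the set $\Lambda$ of admissible choices $(l,m,a_1,\dots,a_m,b_1,\dots,b_l)$, exactly mirroring the structure of the $\textbf{E}_1$ analysis but now with two insertions of the class $(0,1)$ rather than a single $(0,2)$. First I would determine the constraints imposed by conditions (1) and (2) in Definition \ref{SBp} on any surviving term. Since $\tilde\tau(\gamma_i)=\tilde\tau(\beta,2)=1$ must hold for all $i=1,\dots,l$, and a partial sum $\gamma_i$ has $\tilde\tau$-value $1$ precisely when its vector-space dimension is positive, each $\gamma_i$ must absorb at least one of the two $(0,1)$'s; since there are exactly two such factors, this forces $l\in\{1,2\}$. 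I would treat these two cases separately.

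Next, for each value of $l$ I would pin down where the $(0,1)$ factors can sit among the $\theta_i$ and evaluate the corresponding $S_{\textbf{E}_2}$ factors, using that $\tau^\bullet(\theta_i)=0$ whenever $\theta_i$ is built only from classes $(\psi_j,0)$ and $\tau^\bullet(\theta_i)=-1$ whenever $\theta_i$ contains a $(0,1)$ (note that in the $\tau^\bullet$ stability condition, $\tau^\bullet(0,1)=-1$ and the sum of a $(\psi_j,0)$-block with a $(0,1)$ still has positive dimension, hence $\tau^\bullet=-1$). The key point, just as in the computation of $U_k$, is that condition (1) forces the two $(0,1)$'s each to occupy their own singleton block $\theta_p$, $\theta_q$ with $p<q$, so that the $\theta$-sequence has $\tau^\bullet$-pattern $0,\dots,0,-1,0,\dots,0,-1,0,\dots,0$. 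Then I would run through the alternative $S$-conditions (a)/(b) at each index $i$ of each $\gamma$-block and observe, as in $\textbf{E}_1$, that $S_{\textbf{E}_2}$ vanishes unless $p$ and $q$ are among the first two or so positions — which severely restricts the surviving configurations.

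The heart of the argument is then a matching/involution on the surviving terms: I expect that for $l=2$ the two $(0,1)$'s lie in separate $\gamma$-blocks and contribute a term, while for $l=1$ both lie in the single $\gamma$-block, and these two families of terms are in sign-reversing bijection (the $1/l$ and $(-1)^{l-1}$ weights together with the $1/(a_i-a_{i-1})!$ and the $S$ signs conspire to make the $l=1$ contributions cancel the $l=2$ contributions). Concretely I would write out $U_{k,m}$ as a sum over $m$ and over the positions $a_i$, separate the $l=1$ piece from the $l=2$ piece, factor out the common combinatorial tail $\sum(-1)^{\#}\prod 1/(a_i-a_{i-1})!$ appearing in both, and verify that the remaining numerical prefactors are negatives of each other. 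This is where I borrow the same bookkeeping device used to pass from the $U_k$ formulas to Equation \eqref{A}.

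The main obstacle I anticipate is organizing the case analysis cleanly: unlike the single $(0,1)$ situation, here the relative order of the two $(0,1)$ blocks within the $\theta$- and $\gamma$-sequences creates several sub-cases (both in one $\theta_{b_{i-1}+j}$ versus in two different ones, adjacent versus separated, $k$ and $m$ near the front versus one of them deep in the sequence), and one must check that the $S_{\textbf{E}_2}$ factor genuinely vanishes in all configurations except a small explicit list, then that the surviving list cancels. Getting the $\tilde\tau$-inequalities of conditions (a) and (b) right for partial sums straddling a $(0,1)$ — where the dimension jumps — is the delicate computational point; everything downstream is the same factorial bookkeeping already carried out for $\textbf{E}_1$.
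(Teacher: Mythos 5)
Your overall strategy---restrict $l$ to $\{1,2\}$ via the condition $\tilde{\tau}(\gamma_{i})=\tilde{\tau}(\beta,2)=1$, locate the $(0,1)$ factors among the $\theta$-blocks, and cancel surviving terms in sign-reversing pairs---is the right shape, and your observation that each $\gamma_{i}$ must absorb at least one $(0,1)$ is correct. But there are two concrete problems with the mechanism you propose. First, your claim that condition (1) of Definition \ref{SBp} forces the two $(0,1)$'s \emph{each} into their own singleton $\theta$-block is false: condition (1) only forbids mixing a $(0,1)$ with a $(\psi_{j},0)$ inside one block (their $\tau^{\bullet}$-values differ), so when the two $(0,1)$'s are \emph{adjacent} they may be merged into a single block $\theta_{p}=(0,2)$, which still has $\tau^{\bullet}=-1$ equal to that of each constituent. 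This merged configuration is not a technicality: it is precisely the nonvanishing $l=1$ term. If the two $(0,1)$'s sit in \emph{separate} singleton blocks $\theta_{p},\theta_{q}$, then at the index $i$ with $p\leq i<q$ one has $\tau^{\bullet}(\theta_{i})\leq\tau^{\bullet}(\theta_{i+1})$ failing to pair with $\tilde{\tau}(\theta_{1}+\cdots+\theta_{i})>\tilde{\tau}(\theta_{i+1}+\cdots+\theta_{m})$ (both sides equal $1$), and condition (b) fails too, so $S=0$ for $l=1$. Under your singleton constraint every $l=1$ term therefore vanishes, the $l=2$ term in the adjacent configuration survives uncancelled, and your argument would wrongly give $U_{k,k+1}\neq 0$.

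Second, your ``heart of the argument''---a bijection between $l=1$ and $l=2$ contributions---only describes the adjacent case. The paper splits into three cases by the number of nonzero $(\psi_{j},0)$'s between the two $(0,1)$'s: when they are adjacent, the cancellation is indeed $l=1$ (merged $(0,2)$ block) against $l=2$ (two singleton blocks); but when one or more $(\psi_{j},0)$'s separate them, merging is impossible, all $l=1$ terms vanish as above, and the cancellation happens entirely \emph{within} $l=2$, between the two admissible choices of the cut point $b_{1}$ (i.e., whether the blocks lying between the two $(0,1)$'s are attached to $\gamma_{1}$ or to $\gamma_{2}$); these two choices produce $S$-products $(-1)^{q-3}(-1)^{m-q}$ and $(-1)^{q-2}(-1)^{m-q}$ with identical factorial weights, hence cancel. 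You would need to identify and carry out this second, different pairing to cover the separated configurations; as written, the proposal does not close.
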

\begin{proof}
In order to evaluate $U_{k,t}$ we need to compute the combinatorial coefficients $S_{\textbf{E}_{2}}(\theta_{b_{i-1}+1},\theta_{b_{i-1}+2},\cdots \theta_{b_{i}};\tau^{\bullet},\tilde{\tau})$ (in short $S_{\textbf{E}_{2}}$) appearing on the right hand side of Equation \eqref{b,k,m}. To compute $S_{\textbf{E}_{2}}$ we divide our analysis into three separate combinatorial cases (Case 1, Case 2 and Case 3) based on how the  $(0,1)$ elements are located in the sequence of $(\psi_{i},d_{i})$'s. We denote the contributions to $U_{k,t}$ in each case by $U^{1}_{k,t}, U^{2}_{k,t}$ and $U^{3}_{k,t}$. Moreover, for $i=1,2,3$ we denote by $S^{i}_{\textbf{E}_{2}}$ the value of the function $S_{\textbf{E}_{2}}$ corresponding to $U^{i}_{k,t}$. Note that by our construction $U_{k,t}=U^{1}_{k,t}+U^{2}_{k,t}+U^{3}_{k,t}.$
\subsubsection{Computations in Case 1:}
Case 1 represents the configurations, where the two $(0,1)$ elements occur adjacent to each other. In this case it is seen that $U^{1}_{k,t}=U_{k,k+1}$ (the two $(0,1)$ elements are adjacent, therefore $t=k+1$). Now we need to choose and distribute $a_{i}$ in order to obtain equation \eqref{eqU3}. The following diagrams describe the two possible distribution types for $a_{i}$, we denote them by Case 1-a and Case 1-b. In Case 1-a we have $a_{1}=k-1, a_{2}=k, a_{3}=k+1$ and $a_{4}$ can be chosen freely as long as $a_{4}\geq k+2$.  

\ifx\JPicScale\undefined\def\JPicScale{.9}\fi
\unitlength \JPicScale mm
\begin{picture}(128.05,57)(-5,21)
\linethickness{0.3mm}
\put(0,45){\line(1,0){120}}
\put(0,45){\makebox(0,0)[cc]{$\bullet$}}

\put(10,45){\makebox(0,0)[cc]{$\bullet$}}

\put(20,45){\makebox(0,0)[cc]{$\bullet$}}

\put(30,45){\makebox(0,0)[cc]{$\bullet$}}

\put(40,45){\makebox(0,0)[cc]{$\bullet$}}

\put(50,45){\makebox(0,0)[cc]{$\bullet$}}

\put(60,45){\makebox(0,0)[cc]{$\bullet$}}

\put(70,45){\makebox(0,0)[cc]{$\bullet$}}

\put(80,45){\makebox(0,0)[cc]{$\bullet$}}

\put(90,45){\makebox(0,0)[cc]{$\bullet$}}

\put(100,45){\makebox(0,0)[cc]{$\bullet$}}

\put(110,45){\makebox(0,0)[cc]{$\bullet$}}

\put(120,45){\makebox(0,0)[cc]{$\bullet$}}

\linethickness{0.3mm}
\put(20,45){\line(0,1){10}}
\linethickness{0.3mm}
\put(50,45){\line(0,1){10}}
\linethickness{0.3mm}
\put(120,45){\line(0,1){10}}
\put(10,25){\makebox(0,0)[cc]{$\theta_{1}$}}

\put(30,25){\makebox(0,0)[cc]{$\theta_{2}$}}

\linethickness{0.3mm}
\put(40,45){\line(0,1){20}}
\put(40,40){\makebox(0,0)[cc]{}}

\linethickness{0.3mm}
\put(30,45){\line(0,1){20}}
\put(30,70){\makebox(0,0)[cc]{$(0,1)$}}

\put(40,70){\makebox(0,0)[cc]{$(0,1)$}}

\linethickness{0.3mm}
\multiput(0,35)(0.1,-0.13){3}{\line(0,-1){0.13}}
\multiput(0.31,34.6)(0.11,-0.13){3}{\line(0,-1){0.13}}
\multiput(0.64,34.22)(0.11,-0.12){3}{\line(0,-1){0.12}}
\multiput(0.98,33.85)(0.12,-0.12){3}{\line(0,-1){0.12}}
\multiput(1.33,33.49)(0.12,-0.11){3}{\line(1,0){0.12}}
\multiput(1.7,33.15)(0.13,-0.11){3}{\line(1,0){0.13}}
\multiput(2.09,32.82)(0.13,-0.1){3}{\line(1,0){0.13}}
\multiput(2.48,32.51)(0.2,-0.15){2}{\line(1,0){0.2}}
\multiput(2.89,32.22)(0.21,-0.14){2}{\line(1,0){0.21}}
\multiput(3.31,31.94)(0.22,-0.13){2}{\line(1,0){0.22}}
\multiput(3.74,31.68)(0.22,-0.12){2}{\line(1,0){0.22}}
\multiput(4.19,31.43)(0.23,-0.11){2}{\line(1,0){0.23}}
\multiput(4.64,31.21)(0.23,-0.1){2}{\line(1,0){0.23}}
\multiput(5.1,31)(0.23,-0.09){2}{\line(1,0){0.23}}
\multiput(5.56,30.81)(0.47,-0.17){1}{\line(1,0){0.47}}
\multiput(6.04,30.64)(0.48,-0.15){1}{\line(1,0){0.48}}
\multiput(6.52,30.49)(0.49,-0.13){1}{\line(1,0){0.49}}
\multiput(7.01,30.36)(0.49,-0.11){1}{\line(1,0){0.49}}
\multiput(7.5,30.25)(0.5,-0.09){1}{\line(1,0){0.5}}
\multiput(7.99,30.16)(0.5,-0.07){1}{\line(1,0){0.5}}
\multiput(8.49,30.09)(0.5,-0.05){1}{\line(1,0){0.5}}
\multiput(8.99,30.04)(0.5,-0.03){1}{\line(1,0){0.5}}
\multiput(9.5,30.01)(0.5,-0.01){1}{\line(1,0){0.5}}
\multiput(10,30)(0.5,0.01){1}{\line(1,0){0.5}}
\multiput(10.5,30.01)(0.5,0.03){1}{\line(1,0){0.5}}
\multiput(11.01,30.04)(0.5,0.05){1}{\line(1,0){0.5}}
\multiput(11.51,30.09)(0.5,0.07){1}{\line(1,0){0.5}}
\multiput(12.01,30.16)(0.5,0.09){1}{\line(1,0){0.5}}
\multiput(12.5,30.25)(0.49,0.11){1}{\line(1,0){0.49}}
\multiput(12.99,30.36)(0.49,0.13){1}{\line(1,0){0.49}}
\multiput(13.48,30.49)(0.48,0.15){1}{\line(1,0){0.48}}
\multiput(13.96,30.64)(0.47,0.17){1}{\line(1,0){0.47}}
\multiput(14.44,30.81)(0.23,0.09){2}{\line(1,0){0.23}}
\multiput(14.9,31)(0.23,0.1){2}{\line(1,0){0.23}}
\multiput(15.36,31.21)(0.23,0.11){2}{\line(1,0){0.23}}
\multiput(15.81,31.43)(0.22,0.12){2}{\line(1,0){0.22}}
\multiput(16.26,31.68)(0.22,0.13){2}{\line(1,0){0.22}}
\multiput(16.69,31.94)(0.21,0.14){2}{\line(1,0){0.21}}
\multiput(17.11,32.22)(0.2,0.15){2}{\line(1,0){0.2}}
\multiput(17.52,32.51)(0.13,0.1){3}{\line(1,0){0.13}}
\multiput(17.91,32.82)(0.13,0.11){3}{\line(1,0){0.13}}
\multiput(18.3,33.15)(0.12,0.11){3}{\line(1,0){0.12}}
\multiput(18.67,33.49)(0.12,0.12){3}{\line(0,1){0.12}}
\multiput(19.02,33.85)(0.11,0.12){3}{\line(0,1){0.12}}
\multiput(19.36,34.22)(0.11,0.13){3}{\line(0,1){0.13}}
\multiput(19.69,34.6)(0.1,0.13){3}{\line(0,1){0.13}}

\linethickness{0.3mm}
\multiput(100,30)(0.49,-0.1){1}{\line(1,0){0.49}}
\multiput(100.49,29.9)(0.49,-0.09){1}{\line(1,0){0.49}}
\multiput(100.97,29.81)(0.49,-0.08){1}{\line(1,0){0.49}}
\multiput(101.47,29.72)(0.49,-0.07){1}{\line(1,0){0.49}}
\multiput(101.96,29.65)(0.49,-0.06){1}{\line(1,0){0.49}}
\multiput(102.45,29.59)(0.49,-0.05){1}{\line(1,0){0.49}}
\multiput(102.94,29.54)(0.5,-0.04){1}{\line(1,0){0.5}}
\multiput(103.44,29.5)(0.5,-0.03){1}{\line(1,0){0.5}}
\multiput(103.94,29.48)(0.5,-0.02){1}{\line(1,0){0.5}}
\multiput(104.43,29.46)(0.5,-0.01){1}{\line(1,0){0.5}}
\multiput(104.93,29.45)(0.5,0){1}{\line(1,0){0.5}}
\multiput(105.43,29.46)(0.5,0.01){1}{\line(1,0){0.5}}
\multiput(105.92,29.47)(0.5,0.03){1}{\line(1,0){0.5}}
\multiput(106.42,29.49)(0.5,0.04){1}{\line(1,0){0.5}}
\multiput(106.92,29.53)(0.49,0.05){1}{\line(1,0){0.49}}
\multiput(107.41,29.58)(0.49,0.06){1}{\line(1,0){0.49}}
\multiput(107.9,29.63)(0.49,0.07){1}{\line(1,0){0.49}}
\multiput(108.4,29.7)(0.49,0.08){1}{\line(1,0){0.49}}
\multiput(108.89,29.78)(0.49,0.09){1}{\line(1,0){0.49}}
\multiput(109.38,29.87)(0.49,0.1){1}{\line(1,0){0.49}}
\multiput(109.86,29.97)(0.48,0.11){1}{\line(1,0){0.48}}
\multiput(110.35,30.08)(0.48,0.12){1}{\line(1,0){0.48}}
\multiput(110.83,30.2)(0.48,0.13){1}{\line(1,0){0.48}}
\multiput(111.31,30.33)(0.48,0.14){1}{\line(1,0){0.48}}
\multiput(111.79,30.47)(0.47,0.15){1}{\line(1,0){0.47}}
\multiput(112.26,30.62)(0.47,0.16){1}{\line(1,0){0.47}}
\multiput(112.73,30.79)(0.47,0.17){1}{\line(1,0){0.47}}
\multiput(113.2,30.96)(0.23,0.09){2}{\line(1,0){0.23}}
\multiput(113.66,31.14)(0.23,0.1){2}{\line(1,0){0.23}}
\multiput(114.12,31.33)(0.23,0.1){2}{\line(1,0){0.23}}
\multiput(114.57,31.53)(0.22,0.11){2}{\line(1,0){0.22}}
\multiput(115.02,31.74)(0.22,0.11){2}{\line(1,0){0.22}}
\multiput(115.47,31.97)(0.22,0.12){2}{\line(1,0){0.22}}
\multiput(115.91,32.2)(0.22,0.12){2}{\line(1,0){0.22}}
\multiput(116.34,32.44)(0.22,0.12){2}{\line(1,0){0.22}}
\multiput(116.77,32.68)(0.21,0.13){2}{\line(1,0){0.21}}
\multiput(117.2,32.94)(0.21,0.13){2}{\line(1,0){0.21}}
\multiput(117.62,33.21)(0.21,0.14){2}{\line(1,0){0.21}}
\multiput(118.03,33.49)(0.2,0.14){2}{\line(1,0){0.2}}
\multiput(118.44,33.77)(0.2,0.15){2}{\line(1,0){0.2}}
\multiput(118.84,34.07)(0.13,0.1){3}{\line(1,0){0.13}}
\multiput(119.23,34.37)(0.13,0.1){3}{\line(1,0){0.13}}
\multiput(119.62,34.68)(0.13,0.11){3}{\line(1,0){0.13}}

\linethickness{0.3mm}
\multiput(60,35)(0.21,-0.13){2}{\line(1,0){0.21}}
\multiput(60.42,34.74)(0.21,-0.13){2}{\line(1,0){0.21}}
\multiput(60.85,34.48)(0.22,-0.13){2}{\line(1,0){0.22}}
\multiput(61.28,34.22)(0.22,-0.12){2}{\line(1,0){0.22}}
\multiput(61.72,33.98)(0.22,-0.12){2}{\line(1,0){0.22}}
\multiput(62.16,33.74)(0.22,-0.12){2}{\line(1,0){0.22}}
\multiput(62.6,33.51)(0.22,-0.11){2}{\line(1,0){0.22}}
\multiput(63.04,33.28)(0.22,-0.11){2}{\line(1,0){0.22}}
\multiput(63.49,33.07)(0.23,-0.11){2}{\line(1,0){0.23}}
\multiput(63.95,32.86)(0.23,-0.1){2}{\line(1,0){0.23}}
\multiput(64.4,32.65)(0.23,-0.1){2}{\line(1,0){0.23}}
\multiput(64.86,32.45)(0.23,-0.1){2}{\line(1,0){0.23}}
\multiput(65.32,32.26)(0.23,-0.09){2}{\line(1,0){0.23}}
\multiput(65.79,32.08)(0.47,-0.18){1}{\line(1,0){0.47}}
\multiput(66.25,31.91)(0.47,-0.17){1}{\line(1,0){0.47}}
\multiput(66.72,31.74)(0.47,-0.16){1}{\line(1,0){0.47}}
\multiput(67.2,31.58)(0.48,-0.15){1}{\line(1,0){0.48}}
\multiput(67.67,31.42)(0.48,-0.15){1}{\line(1,0){0.48}}
\multiput(68.15,31.27)(0.48,-0.14){1}{\line(1,0){0.48}}
\multiput(68.63,31.13)(0.48,-0.13){1}{\line(1,0){0.48}}
\multiput(69.11,31)(0.48,-0.12){1}{\line(1,0){0.48}}
\multiput(69.59,30.88)(0.49,-0.12){1}{\line(1,0){0.49}}
\multiput(70.08,30.76)(0.49,-0.11){1}{\line(1,0){0.49}}
\multiput(70.57,30.65)(0.49,-0.1){1}{\line(1,0){0.49}}
\multiput(71.06,30.55)(0.49,-0.09){1}{\line(1,0){0.49}}
\multiput(71.55,30.45)(0.49,-0.09){1}{\line(1,0){0.49}}
\multiput(72.04,30.36)(0.49,-0.08){1}{\line(1,0){0.49}}
\multiput(72.53,30.29)(0.49,-0.07){1}{\line(1,0){0.49}}
\multiput(73.02,30.21)(0.5,-0.06){1}{\line(1,0){0.5}}
\multiput(73.52,30.15)(0.5,-0.06){1}{\line(1,0){0.5}}
\multiput(74.02,30.09)(0.5,-0.05){1}{\line(1,0){0.5}}
\multiput(74.51,30.04)(0.5,-0.04){1}{\line(1,0){0.5}}
\multiput(75.01,30)(0.5,-0.03){1}{\line(1,0){0.5}}
\multiput(75.51,29.96)(0.5,-0.03){1}{\line(1,0){0.5}}
\multiput(76.01,29.94)(0.5,-0.02){1}{\line(1,0){0.5}}
\multiput(76.51,29.92)(0.5,-0.01){1}{\line(1,0){0.5}}
\multiput(77.01,29.91)(0.5,-0){1}{\line(1,0){0.5}}
\multiput(77.51,29.9)(0.5,0){1}{\line(1,0){0.5}}
\multiput(78,29.91)(0.5,0.01){1}{\line(1,0){0.5}}
\multiput(78.5,29.92)(0.5,0.02){1}{\line(1,0){0.5}}
\multiput(79,29.94)(0.5,0.03){1}{\line(1,0){0.5}}
\multiput(79.5,29.97)(0.5,0.03){1}{\line(1,0){0.5}}

\linethickness{0.3mm}
\put(30,30){\line(0,1){10}}
\linethickness{0.3mm}
\put(40,30){\line(0,1){10}}
\put(40,25){\makebox(0,0)[cc]{$\theta_{3}$}}

\put(20,60){\makebox(0,0)[cc]{}}

\put(120,60){\makebox(0,0)[cc]{$a_{m}$}}

\put(20,60){\makebox(0,0)[cc]{$a_{1}=k-1$}}

\put(90,30){\makebox(0,0)[cc]{$\cdots$}}

\put(50,30){\line(0,1){10}}

\put(50,25){\makebox(0,0)[cc]{$\theta_{4}$}}

\put(50,60){\makebox(0,0)[cc]{$a_{4}=k+2$}}

\put(0,75){\makebox(0,0)[cc]{$\text{Case 1-a}$}}

\put(30,20){\makebox(0,0)[cc]{$a_{2}$}}

\put(40,20){\makebox(0,0)[cc]{$a_{3}$}}

\put(120,25){\makebox(0,0)[cc]{$\theta_{m}$}}

\end{picture}

Now we discuss the second possible distribution of $a_{i}$'s which occurs in Case 1-b. In Case 1-b (diagram below) we set $a_{1}=k-1, a_{2}=k+1$ and $a_{3}$ can be chosen freely (similar to $a_{4}$ in Case 1-a) to have any value as long as $a_{3}\geq k+2$:

\ifx\JPicScale\undefined\def\JPicScale{.9}\fi
\unitlength \JPicScale mm
\begin{picture}(128.05,60)(-5,20)
\linethickness{0.3mm}
\put(0,45){\line(1,0){120}}
\put(0,45){\makebox(0,0)[cc]{$\bullet$}}

\put(10,45){\makebox(0,0)[cc]{$\bullet$}}

\put(20,45){\makebox(0,0)[cc]{$\bullet$}}

\put(30,45){\makebox(0,0)[cc]{$\bullet$}}

\put(40,45){\makebox(0,0)[cc]{$\bullet$}}

\put(50,45){\makebox(0,0)[cc]{$\bullet$}}

\put(60,45){\makebox(0,0)[cc]{$\bullet$}}

\put(70,45){\makebox(0,0)[cc]{$\bullet$}}

\put(80,45){\makebox(0,0)[cc]{$\bullet$}}

\put(90,45){\makebox(0,0)[cc]{$\bullet$}}

\put(100,45){\makebox(0,0)[cc]{$\bullet$}}

\put(110,45){\makebox(0,0)[cc]{$\bullet$}}

\put(120,45){\makebox(0,0)[cc]{$\bullet$}}

\linethickness{0.3mm}
\put(20,45){\line(0,1){10}}
\linethickness{0.3mm}
\put(50,45){\line(0,1){10}}
\linethickness{0.3mm}
\put(120,45){\line(0,1){10}}
\put(10,25){\makebox(0,0)[cc]{$\theta_{1}$}}

\linethickness{0.3mm}
\put(40,45){\line(0,1){20}}
\put(40,40){\makebox(0,0)[cc]{}}

\linethickness{0.3mm}
\put(30,45){\line(0,1){20}}
\put(30,70){\makebox(0,0)[cc]{$(0,1)$}}

\put(40,70){\makebox(0,0)[cc]{$(0,1)$}}

\linethickness{0.3mm}
\multiput(0,35)(0.1,-0.13){3}{\line(0,-1){0.13}}
\multiput(0.31,34.6)(0.11,-0.13){3}{\line(0,-1){0.13}}
\multiput(0.64,34.22)(0.11,-0.12){3}{\line(0,-1){0.12}}
\multiput(0.98,33.85)(0.12,-0.12){3}{\line(0,-1){0.12}}
\multiput(1.33,33.49)(0.12,-0.11){3}{\line(1,0){0.12}}
\multiput(1.7,33.15)(0.13,-0.11){3}{\line(1,0){0.13}}
\multiput(2.09,32.82)(0.13,-0.1){3}{\line(1,0){0.13}}
\multiput(2.48,32.51)(0.2,-0.15){2}{\line(1,0){0.2}}
\multiput(2.89,32.22)(0.21,-0.14){2}{\line(1,0){0.21}}
\multiput(3.31,31.94)(0.22,-0.13){2}{\line(1,0){0.22}}
\multiput(3.74,31.68)(0.22,-0.12){2}{\line(1,0){0.22}}
\multiput(4.19,31.43)(0.23,-0.11){2}{\line(1,0){0.23}}
\multiput(4.64,31.21)(0.23,-0.1){2}{\line(1,0){0.23}}
\multiput(5.1,31)(0.23,-0.09){2}{\line(1,0){0.23}}
\multiput(5.56,30.81)(0.47,-0.17){1}{\line(1,0){0.47}}
\multiput(6.04,30.64)(0.48,-0.15){1}{\line(1,0){0.48}}
\multiput(6.52,30.49)(0.49,-0.13){1}{\line(1,0){0.49}}
\multiput(7.01,30.36)(0.49,-0.11){1}{\line(1,0){0.49}}
\multiput(7.5,30.25)(0.5,-0.09){1}{\line(1,0){0.5}}
\multiput(7.99,30.16)(0.5,-0.07){1}{\line(1,0){0.5}}
\multiput(8.49,30.09)(0.5,-0.05){1}{\line(1,0){0.5}}
\multiput(8.99,30.04)(0.5,-0.03){1}{\line(1,0){0.5}}
\multiput(9.5,30.01)(0.5,-0.01){1}{\line(1,0){0.5}}
\multiput(10,30)(0.5,0.01){1}{\line(1,0){0.5}}
\multiput(10.5,30.01)(0.5,0.03){1}{\line(1,0){0.5}}
\multiput(11.01,30.04)(0.5,0.05){1}{\line(1,0){0.5}}
\multiput(11.51,30.09)(0.5,0.07){1}{\line(1,0){0.5}}
\multiput(12.01,30.16)(0.5,0.09){1}{\line(1,0){0.5}}
\multiput(12.5,30.25)(0.49,0.11){1}{\line(1,0){0.49}}
\multiput(12.99,30.36)(0.49,0.13){1}{\line(1,0){0.49}}
\multiput(13.48,30.49)(0.48,0.15){1}{\line(1,0){0.48}}
\multiput(13.96,30.64)(0.47,0.17){1}{\line(1,0){0.47}}
\multiput(14.44,30.81)(0.23,0.09){2}{\line(1,0){0.23}}
\multiput(14.9,31)(0.23,0.1){2}{\line(1,0){0.23}}
\multiput(15.36,31.21)(0.23,0.11){2}{\line(1,0){0.23}}
\multiput(15.81,31.43)(0.22,0.12){2}{\line(1,0){0.22}}
\multiput(16.26,31.68)(0.22,0.13){2}{\line(1,0){0.22}}
\multiput(16.69,31.94)(0.21,0.14){2}{\line(1,0){0.21}}
\multiput(17.11,32.22)(0.2,0.15){2}{\line(1,0){0.2}}
\multiput(17.52,32.51)(0.13,0.1){3}{\line(1,0){0.13}}
\multiput(17.91,32.82)(0.13,0.11){3}{\line(1,0){0.13}}
\multiput(18.3,33.15)(0.12,0.11){3}{\line(1,0){0.12}}
\multiput(18.67,33.49)(0.12,0.12){3}{\line(0,1){0.12}}
\multiput(19.02,33.85)(0.11,0.12){3}{\line(0,1){0.12}}
\multiput(19.36,34.22)(0.11,0.13){3}{\line(0,1){0.13}}
\multiput(19.69,34.6)(0.1,0.13){3}{\line(0,1){0.13}}

\linethickness{0.3mm}
\multiput(100,30)(0.49,-0.1){1}{\line(1,0){0.49}}
\multiput(100.49,29.9)(0.49,-0.09){1}{\line(1,0){0.49}}
\multiput(100.97,29.81)(0.49,-0.08){1}{\line(1,0){0.49}}
\multiput(101.47,29.72)(0.49,-0.07){1}{\line(1,0){0.49}}
\multiput(101.96,29.65)(0.49,-0.06){1}{\line(1,0){0.49}}
\multiput(102.45,29.59)(0.49,-0.05){1}{\line(1,0){0.49}}
\multiput(102.94,29.54)(0.5,-0.04){1}{\line(1,0){0.5}}
\multiput(103.44,29.5)(0.5,-0.03){1}{\line(1,0){0.5}}
\multiput(103.94,29.48)(0.5,-0.02){1}{\line(1,0){0.5}}
\multiput(104.43,29.46)(0.5,-0.01){1}{\line(1,0){0.5}}
\multiput(104.93,29.45)(0.5,0){1}{\line(1,0){0.5}}
\multiput(105.43,29.46)(0.5,0.01){1}{\line(1,0){0.5}}
\multiput(105.92,29.47)(0.5,0.03){1}{\line(1,0){0.5}}
\multiput(106.42,29.49)(0.5,0.04){1}{\line(1,0){0.5}}
\multiput(106.92,29.53)(0.49,0.05){1}{\line(1,0){0.49}}
\multiput(107.41,29.58)(0.49,0.06){1}{\line(1,0){0.49}}
\multiput(107.9,29.63)(0.49,0.07){1}{\line(1,0){0.49}}
\multiput(108.4,29.7)(0.49,0.08){1}{\line(1,0){0.49}}
\multiput(108.89,29.78)(0.49,0.09){1}{\line(1,0){0.49}}
\multiput(109.38,29.87)(0.49,0.1){1}{\line(1,0){0.49}}
\multiput(109.86,29.97)(0.48,0.11){1}{\line(1,0){0.48}}
\multiput(110.35,30.08)(0.48,0.12){1}{\line(1,0){0.48}}
\multiput(110.83,30.2)(0.48,0.13){1}{\line(1,0){0.48}}
\multiput(111.31,30.33)(0.48,0.14){1}{\line(1,0){0.48}}
\multiput(111.79,30.47)(0.47,0.15){1}{\line(1,0){0.47}}
\multiput(112.26,30.62)(0.47,0.16){1}{\line(1,0){0.47}}
\multiput(112.73,30.79)(0.47,0.17){1}{\line(1,0){0.47}}
\multiput(113.2,30.96)(0.23,0.09){2}{\line(1,0){0.23}}
\multiput(113.66,31.14)(0.23,0.1){2}{\line(1,0){0.23}}
\multiput(114.12,31.33)(0.23,0.1){2}{\line(1,0){0.23}}
\multiput(114.57,31.53)(0.22,0.11){2}{\line(1,0){0.22}}
\multiput(115.02,31.74)(0.22,0.11){2}{\line(1,0){0.22}}
\multiput(115.47,31.97)(0.22,0.12){2}{\line(1,0){0.22}}
\multiput(115.91,32.2)(0.22,0.12){2}{\line(1,0){0.22}}
\multiput(116.34,32.44)(0.22,0.12){2}{\line(1,0){0.22}}
\multiput(116.77,32.68)(0.21,0.13){2}{\line(1,0){0.21}}
\multiput(117.2,32.94)(0.21,0.13){2}{\line(1,0){0.21}}
\multiput(117.62,33.21)(0.21,0.14){2}{\line(1,0){0.21}}
\multiput(118.03,33.49)(0.2,0.14){2}{\line(1,0){0.2}}
\multiput(118.44,33.77)(0.2,0.15){2}{\line(1,0){0.2}}
\multiput(118.84,34.07)(0.13,0.1){3}{\line(1,0){0.13}}
\multiput(119.23,34.37)(0.13,0.1){3}{\line(1,0){0.13}}
\multiput(119.62,34.68)(0.13,0.11){3}{\line(1,0){0.13}}

\linethickness{0.3mm}
\multiput(60,35)(0.21,-0.13){2}{\line(1,0){0.21}}
\multiput(60.42,34.74)(0.21,-0.13){2}{\line(1,0){0.21}}
\multiput(60.85,34.48)(0.22,-0.13){2}{\line(1,0){0.22}}
\multiput(61.28,34.22)(0.22,-0.12){2}{\line(1,0){0.22}}
\multiput(61.72,33.98)(0.22,-0.12){2}{\line(1,0){0.22}}
\multiput(62.16,33.74)(0.22,-0.12){2}{\line(1,0){0.22}}
\multiput(62.6,33.51)(0.22,-0.11){2}{\line(1,0){0.22}}
\multiput(63.04,33.28)(0.22,-0.11){2}{\line(1,0){0.22}}
\multiput(63.49,33.07)(0.23,-0.11){2}{\line(1,0){0.23}}
\multiput(63.95,32.86)(0.23,-0.1){2}{\line(1,0){0.23}}
\multiput(64.4,32.65)(0.23,-0.1){2}{\line(1,0){0.23}}
\multiput(64.86,32.45)(0.23,-0.1){2}{\line(1,0){0.23}}
\multiput(65.32,32.26)(0.23,-0.09){2}{\line(1,0){0.23}}
\multiput(65.79,32.08)(0.47,-0.18){1}{\line(1,0){0.47}}
\multiput(66.25,31.91)(0.47,-0.17){1}{\line(1,0){0.47}}
\multiput(66.72,31.74)(0.47,-0.16){1}{\line(1,0){0.47}}
\multiput(67.2,31.58)(0.48,-0.15){1}{\line(1,0){0.48}}
\multiput(67.67,31.42)(0.48,-0.15){1}{\line(1,0){0.48}}
\multiput(68.15,31.27)(0.48,-0.14){1}{\line(1,0){0.48}}
\multiput(68.63,31.13)(0.48,-0.13){1}{\line(1,0){0.48}}
\multiput(69.11,31)(0.48,-0.12){1}{\line(1,0){0.48}}
\multiput(69.59,30.88)(0.49,-0.12){1}{\line(1,0){0.49}}
\multiput(70.08,30.76)(0.49,-0.11){1}{\line(1,0){0.49}}
\multiput(70.57,30.65)(0.49,-0.1){1}{\line(1,0){0.49}}
\multiput(71.06,30.55)(0.49,-0.09){1}{\line(1,0){0.49}}
\multiput(71.55,30.45)(0.49,-0.09){1}{\line(1,0){0.49}}
\multiput(72.04,30.36)(0.49,-0.08){1}{\line(1,0){0.49}}
\multiput(72.53,30.29)(0.49,-0.07){1}{\line(1,0){0.49}}
\multiput(73.02,30.21)(0.5,-0.06){1}{\line(1,0){0.5}}
\multiput(73.52,30.15)(0.5,-0.06){1}{\line(1,0){0.5}}
\multiput(74.02,30.09)(0.5,-0.05){1}{\line(1,0){0.5}}
\multiput(74.51,30.04)(0.5,-0.04){1}{\line(1,0){0.5}}
\multiput(75.01,30)(0.5,-0.03){1}{\line(1,0){0.5}}
\multiput(75.51,29.96)(0.5,-0.03){1}{\line(1,0){0.5}}
\multiput(76.01,29.94)(0.5,-0.02){1}{\line(1,0){0.5}}
\multiput(76.51,29.92)(0.5,-0.01){1}{\line(1,0){0.5}}
\multiput(77.01,29.91)(0.5,-0){1}{\line(1,0){0.5}}
\multiput(77.51,29.9)(0.5,0){1}{\line(1,0){0.5}}
\multiput(78,29.91)(0.5,0.01){1}{\line(1,0){0.5}}
\multiput(78.5,29.92)(0.5,0.02){1}{\line(1,0){0.5}}
\multiput(79,29.94)(0.5,0.03){1}{\line(1,0){0.5}}
\multiput(79.5,29.97)(0.5,0.03){1}{\line(1,0){0.5}}

\put(20,60){\makebox(0,0)[cc]{}}

\put(120,60){\makebox(0,0)[cc]{$a_{m}$}}

\put(20,60){\makebox(0,0)[cc]{$a_{1}=k-1$}}

\put(90,30){\makebox(0,0)[cc]{$\cdots$}}

\put(50,30){\line(0,1){10}}

\put(50,25){\makebox(0,0)[cc]{$\theta_{3}$}}

\put(0,75){\makebox(0,0)[cc]{$\text{Case 1-b}$}}

\put(50,60){\makebox(0,0)[cc]{$a_{3}=k+2$}}

\put(120,25){\makebox(0,0)[cc]{$\theta_{m}$}}

\put(40,40){\makebox(0,0)[cc]{$a_{2}$}}

\linethickness{0.3mm}
\multiput(30,35)(0.03,-0.51){1}{\line(0,-1){0.51}}
\multiput(30.03,34.49)(0.08,-0.5){1}{\line(0,-1){0.5}}
\multiput(30.1,33.99)(0.13,-0.49){1}{\line(0,-1){0.49}}
\multiput(30.23,33.5)(0.18,-0.47){1}{\line(0,-1){0.47}}
\multiput(30.41,33.03)(0.11,-0.23){2}{\line(0,-1){0.23}}
\multiput(30.63,32.57)(0.13,-0.21){2}{\line(0,-1){0.21}}
\multiput(30.9,32.14)(0.1,-0.13){3}{\line(0,-1){0.13}}
\multiput(31.21,31.74)(0.12,-0.12){3}{\line(0,-1){0.12}}
\multiput(31.56,31.38)(0.13,-0.11){3}{\line(1,0){0.13}}
\multiput(31.94,31.05)(0.21,-0.14){2}{\line(1,0){0.21}}
\multiput(32.36,30.76)(0.22,-0.12){2}{\line(1,0){0.22}}
\multiput(32.8,30.51)(0.23,-0.1){2}{\line(1,0){0.23}}
\multiput(33.26,30.31)(0.48,-0.15){1}{\line(1,0){0.48}}
\multiput(33.75,30.16)(0.5,-0.1){1}{\line(1,0){0.5}}
\multiput(34.24,30.06)(0.5,-0.05){1}{\line(1,0){0.5}}
\put(34.75,30.01){\line(1,0){0.51}}
\multiput(35.25,30.01)(0.5,0.05){1}{\line(1,0){0.5}}
\multiput(35.76,30.06)(0.5,0.1){1}{\line(1,0){0.5}}
\multiput(36.25,30.16)(0.48,0.15){1}{\line(1,0){0.48}}
\multiput(36.74,30.31)(0.23,0.1){2}{\line(1,0){0.23}}
\multiput(37.2,30.51)(0.22,0.12){2}{\line(1,0){0.22}}
\multiput(37.64,30.76)(0.21,0.14){2}{\line(1,0){0.21}}
\multiput(38.06,31.05)(0.13,0.11){3}{\line(1,0){0.13}}
\multiput(38.44,31.38)(0.12,0.12){3}{\line(0,1){0.12}}
\multiput(38.79,31.74)(0.1,0.13){3}{\line(0,1){0.13}}
\multiput(39.1,32.14)(0.13,0.21){2}{\line(0,1){0.21}}
\multiput(39.37,32.57)(0.11,0.23){2}{\line(0,1){0.23}}
\multiput(39.59,33.03)(0.18,0.47){1}{\line(0,1){0.47}}
\multiput(39.77,33.5)(0.13,0.49){1}{\line(0,1){0.49}}
\multiput(39.9,33.99)(0.08,0.5){1}{\line(0,1){0.5}}
\multiput(39.97,34.49)(0.03,0.51){1}{\line(0,1){0.51}}

\put(35,25){\makebox(0,0)[cc]{$\theta_{2}$}}

\end{picture}

Let us now compute the value of  $U^{1}_{k,t}$. Since Case 1 was given by two possible configurations (Case 1-a and Case 1-b) we denote by $U^{a}_{k,k+1}$ the value of $U^{1}_{k,t}$ when we have the Case 1-a configuration, and similarly by $U^{b}_{k,k+1}$ the value of $U^{1}_{k,t}$ when we have the Case 1-b configuration. It is trivially seen that $U^{1}_{k,t}=U^{a}_{k,k+1}+U^{b}_{k,k+1}$. We compute the coefficient $S^{1a}_{\textbf{E}_{2}}$ and $S^{1b}_{\textbf{E}_{2}}$ induced by the fixed distributions of $a_{i}$'s as shown in Case 1-a and Case 1-b. 

Consider the diagram of Case 1-a; We set for the variable $l$ in \eqref{eqU2}, $l=1$ or $l=2$, (for $l>2$, $S^{1a}_{\textbf{E}_{2}}=0$). If $l=1$ then according to formula  \eqref{eqU2} we need to compute $S^{1a}_{\textbf{E}_{2}}(\theta_{1},\cdots,\theta_{m})$. Note that $\tau^{\bullet}(\theta_{2})=-1$ and $\tau^{\bullet}(\theta_{3})=-1$ then $\tau^{\bullet}(\theta_{2})\leq \tau^{\bullet}(\theta_{3})$ however $\tilde{\tau}(\theta_{1}+\theta_{2})\ngtr \tilde{\tau}(\theta_{3}+\cdots+\theta_{m})$, hence neither condition $(a)$ nor $(b)$ in Definition \ref{SBp} are satisfied and $S^{1a}_{\textbf{E}_{2}}(\theta_{1},\cdots,\theta_{m})=0$. Now set $l=2$. Setting $l=2$ means that we need to choose $0=b_{0}<b_{1}<b_{2}=m$ so that  $b_{i}$, $i=0,1,2$, satisfy the conditions in Definition \eqref{eqU2}.  Note that one can choose $b_{1}=1,\cdots, m$. However the only allowed choice for $b_{1}$ is to set $b_{1}=2$. We explain this fact further; Set $b_{1}=1$, in that case $\gamma_{1}=\theta_{1}$ and $\gamma_{2}=\theta_{2}+\cdots +\theta_{m}$. This configuration is not allowed, since for $\gamma_{1}$, $\tilde{\tau}(\gamma_{1})=0\neq \tilde{\tau}(\beta,2)=1$. One easily observes that using similar arguments, the only allowable  choice is to set $b_{1}=2$. Now define:
\begin{align*}
& 
U^{a}_{k,k+1}=\sum_{\Lambda}\frac{-1}{2}S^{1a}_{\textbf{E}_{2}}(\theta_{1},\theta_{2})\cdot S^{1a}_{\textbf{E}_{2}}(\theta_{3},\cdots,\theta_{m})\cdot \prod^{m}_{i=1}\frac{1}{(a_{i}-a_{i-1})!},
\end{align*}
where by similar arguments $S^{1a}_{\textbf{E}_{2}}(\theta_{1},\theta_{2})=(-1)^{0}=1$ and $S^{1a}_{\textbf{E}_{2}}(\theta_{3},\cdots,\theta_{m})=(-1)^{(m-3)}$. Hence 
\begin{align*}
&
U^{a}_{k,k+1}=(-1)\cdot \sum_{\Lambda}\frac{1}{2}(-1)^{(m-3)}\cdot \prod^{m}_{i=1}\frac{1}{(a_{i}-a_{i-1})!}\notag\\
&
=(-1)\cdot\sum_{\Lambda}\frac{1}{2}(-1)^{(m-3)}\cdot \frac{1}{(a_{3}-a_{2})!}\cdot\frac{1}{(a_{2}-a_{1})!}\cdot \frac{1}{(a_{1}-a_{0})!}\cdot\prod^{m}_{i=4}\frac{1}{(a_{i}-a_{i-1})!}.
\end{align*}
By looking at configuration in Case 1-a, it is easy to see that $a_{0}=0, a_{1}=k-1$, $a_{2}=k$ and $a_{3}=k+1$. Hence $(a_{2}-a_{1})=1$ and $a_{1}-a_{0}=k-1$. Now we use the result of \cite[Lemma 13.9]{a30} and rewrite this equation as follows:
\begin{align}\label{case111}
&
U^{a}_{k,k+1}=(-\frac{1}{2})\cdot \frac{1}{(a_{3}-a_{2})!}\cdot \frac{1}{(a_{2}-a_{1})!}\cdot \frac{1}{(a_{1}-a_{0})!}\sum_{k+1\leq m\leq n}(-1)^{(m-3)}\cdot \prod^{m}_{i=4}\frac{1}{(a_{i}-a_{i-1})!}=\notag\\
&
(-\frac{1}{2})\cdot \frac{1}{(k-1)!}\cdot \frac{(-1)^{(n-(1+k))}}{(n-(1+k))!}.\notag\\
\end{align}
A similar analysis is carried out for the diagram in Case 1-b. Note that in this case $\theta_{2}=(0,1)+(0,1)=(0,2)$. We can set $l=1$ or $l=2$. Setting $l=2$ would result in obtaining a disallowed configuration, since there exists at least one $\gamma_{i}$ for $i=1,2$ so that $\tilde{\tau}(\gamma_{i})=0\neq \tilde{\tau}(\beta,2)=1$. Hence we set $l=1$. Define
\begin{align*}
U^{b}_{k,k+1}:=\sum_{\Lambda}S^{1b}_{\textbf{E}_{2}}(\theta_{1},\theta_{2},\theta_{3},\cdots,\theta_{m})\cdot \prod^{m}_{i=1}\frac{1}{(a_{i}-a_{i-1})!},
\end{align*}
where by similar arguments, $S^{1b}_{\textbf{E}_{2}}(\theta_{1},\cdots,\theta_{m})=(-1)^{(m-2)}$. Hence 
\begin{align*}
&
U^{b}_{k,k+1}=\sum_{\Lambda}(-1)^{(m-2)}\cdot \prod^{m}_{i=1}\frac{1}{(a_{i}-a_{i-1})!}\notag\\
&
=\sum_{\Lambda}(-1)^{(m-2)}\cdot \frac{1}{(a_{2}-a_{1})!}\cdot\frac{1}{(a_{1}-a_{0})!}\prod^{m}_{i=3}\frac{1}{(a_{i}-a_{i-1})!}.
\end{align*}
By the diagram in Case 1-b, it is easy to see that $a_{0}=0, a_{1}=k-1$ and $a_{2}=k+1$, hence $(a_{2}-a_{1})=2$ and $a_{1}-a_{0}=k-1$. Now use the result of \cite[Lemma 13.9]{a30} and rewrite this equation as follows:
\begin{align}\label{case11}
&
U^{b}_{k,k+1}=\frac{1}{(a_{1}-a_{0})!}\cdot \frac{1}{(a_{2}-a_{1})!}\cdot\sum_{k+1\leq m\leq n}(-1)^{(m-2)}\cdot \prod^{m}_{i=3}\frac{1}{(a_{i}-a_{i-1})!}\notag\\
&
=\frac{1}{2}\cdot \frac{1}{(k-1)!}\cdot \frac{(-1)^{(n-(1+k))}}{(n-(1+k))!}.
\end{align}
By definition $U^{1}_{k,t}=U^{a}_{k,k+1}+U^{b}_{k,k+1}$. Therefore, we obtain: 
\begin{align}\label{type12}
U^{1}_{k,t}=\frac{1}{2}\cdot \frac{1}{(k-1)!}\cdot \frac{(-1)^{(n-1-k)}}{(n-1-k)!}+(-\frac{1}{2})\cdot \frac{1}{(k-1)!}\cdot \frac{(-1)^{(n-1-k)}}{(n-1-k)!}=0.
\end{align}
\subsubsection{Computations in Case 2:}
Case 2 represents the configurations where there exists some $1\leq k \leq n$ for which (using our reparametrization convention) there exists  only one element of type $(\psi_{k},0)=(\beta_{k+1},0)$ between the two elements of type $(0,1)$ such that $\psi_k=\beta_{k+1}\neq 0$. Here we have that $U^{2}_{k,t}:=U_{k,k+2}$ (since there exists one $(\beta_{k+1},0)$ element in between the two $(0,1)$ elements, one in location $k$ and the second in location $t=k+2$). Moreover we denote the $S$ functions in this case by $S^{2}_{\textbf{E}_{2}}$. The set of allowable distributions for $a_{i}$'s is given as:

\ifx\JPicScale\undefined\def\JPicScale{.9}\fi
\unitlength \JPicScale mm
\begin{picture}(133.34,65)(-5,20)
\linethickness{0.3mm}
\put(0,45){\line(1,0){120}}
\put(0,45){\makebox(0,0)[cc]{$\bullet$}}

\put(10,45){\makebox(0,0)[cc]{$\bullet$}}

\put(20,45){\makebox(0,0)[cc]{$\bullet$}}

\put(30,45){\makebox(0,0)[cc]{$\bullet$}}

\put(40,45){\makebox(0,0)[cc]{$\bullet$}}

\put(50,45){\makebox(0,0)[cc]{$\bullet$}}

\put(60,45){\makebox(0,0)[cc]{$\bullet$}}

\put(70,45){\makebox(0,0)[cc]{$\bullet$}}

\put(80,45){\makebox(0,0)[cc]{$\bullet$}}

\put(90,45){\makebox(0,0)[cc]{$\bullet$}}

\put(100,45){\makebox(0,0)[cc]{$\bullet$}}

\put(110,45){\makebox(0,0)[cc]{$\bullet$}}

\put(120,45){\makebox(0,0)[cc]{$\bullet$}}

\put(0,75){\makebox(0,0)[cc]{$\text{Case 2}$}}

\linethickness{0.3mm}
\put(20,45){\line(0,1){10}}
\linethickness{0.3mm}
\put(120,45){\line(0,1){10}}
\put(10,25){\makebox(0,0)[cc]{$\theta_{1}$}}

\linethickness{0.3mm}
\put(50,45){\line(0,1){20}}
\linethickness{0.3mm}
\put(30,45){\line(0,1){20}}
\put(30,70){\makebox(0,0)[cc]{$(0,1)$}}

\put(50,70){\makebox(0,0)[cc]{$(0,1)$}}

\linethickness{0.3mm}
\put(40,45){\line(0,1){10}}
\linethickness{0.3mm}
\multiput(0,35)(0.1,-0.13){3}{\line(0,-1){0.13}}
\multiput(0.31,34.6)(0.11,-0.13){3}{\line(0,-1){0.13}}
\multiput(0.64,34.22)(0.11,-0.12){3}{\line(0,-1){0.12}}
\multiput(0.98,33.85)(0.12,-0.12){3}{\line(0,-1){0.12}}
\multiput(1.33,33.49)(0.12,-0.11){3}{\line(1,0){0.12}}
\multiput(1.7,33.15)(0.13,-0.11){3}{\line(1,0){0.13}}
\multiput(2.09,32.82)(0.13,-0.1){3}{\line(1,0){0.13}}
\multiput(2.48,32.51)(0.2,-0.15){2}{\line(1,0){0.2}}
\multiput(2.89,32.22)(0.21,-0.14){2}{\line(1,0){0.21}}
\multiput(3.31,31.94)(0.22,-0.13){2}{\line(1,0){0.22}}
\multiput(3.74,31.68)(0.22,-0.12){2}{\line(1,0){0.22}}
\multiput(4.19,31.43)(0.23,-0.11){2}{\line(1,0){0.23}}
\multiput(4.64,31.21)(0.23,-0.1){2}{\line(1,0){0.23}}
\multiput(5.1,31)(0.23,-0.09){2}{\line(1,0){0.23}}
\multiput(5.56,30.81)(0.47,-0.17){1}{\line(1,0){0.47}}
\multiput(6.04,30.64)(0.48,-0.15){1}{\line(1,0){0.48}}
\multiput(6.52,30.49)(0.49,-0.13){1}{\line(1,0){0.49}}
\multiput(7.01,30.36)(0.49,-0.11){1}{\line(1,0){0.49}}
\multiput(7.5,30.25)(0.5,-0.09){1}{\line(1,0){0.5}}
\multiput(7.99,30.16)(0.5,-0.07){1}{\line(1,0){0.5}}
\multiput(8.49,30.09)(0.5,-0.05){1}{\line(1,0){0.5}}
\multiput(8.99,30.04)(0.5,-0.03){1}{\line(1,0){0.5}}
\multiput(9.5,30.01)(0.5,-0.01){1}{\line(1,0){0.5}}
\multiput(10,30)(0.5,0.01){1}{\line(1,0){0.5}}
\multiput(10.5,30.01)(0.5,0.03){1}{\line(1,0){0.5}}
\multiput(11.01,30.04)(0.5,0.05){1}{\line(1,0){0.5}}
\multiput(11.51,30.09)(0.5,0.07){1}{\line(1,0){0.5}}
\multiput(12.01,30.16)(0.5,0.09){1}{\line(1,0){0.5}}
\multiput(12.5,30.25)(0.49,0.11){1}{\line(1,0){0.49}}
\multiput(12.99,30.36)(0.49,0.13){1}{\line(1,0){0.49}}
\multiput(13.48,30.49)(0.48,0.15){1}{\line(1,0){0.48}}
\multiput(13.96,30.64)(0.47,0.17){1}{\line(1,0){0.47}}
\multiput(14.44,30.81)(0.23,0.09){2}{\line(1,0){0.23}}
\multiput(14.9,31)(0.23,0.1){2}{\line(1,0){0.23}}
\multiput(15.36,31.21)(0.23,0.11){2}{\line(1,0){0.23}}
\multiput(15.81,31.43)(0.22,0.12){2}{\line(1,0){0.22}}
\multiput(16.26,31.68)(0.22,0.13){2}{\line(1,0){0.22}}
\multiput(16.69,31.94)(0.21,0.14){2}{\line(1,0){0.21}}
\multiput(17.11,32.22)(0.2,0.15){2}{\line(1,0){0.2}}
\multiput(17.52,32.51)(0.13,0.1){3}{\line(1,0){0.13}}
\multiput(17.91,32.82)(0.13,0.11){3}{\line(1,0){0.13}}
\multiput(18.3,33.15)(0.12,0.11){3}{\line(1,0){0.12}}
\multiput(18.67,33.49)(0.12,0.12){3}{\line(0,1){0.12}}
\multiput(19.02,33.85)(0.11,0.12){3}{\line(0,1){0.12}}
\multiput(19.36,34.22)(0.11,0.13){3}{\line(0,1){0.13}}
\multiput(19.69,34.6)(0.1,0.13){3}{\line(0,1){0.13}}

\put(15,60){\makebox(0,0)[cc]{$a_{1}=k-1$}}

\put(120,60){\makebox(0,0)[cc]{$a_{m}$}}

\linethickness{0.3mm}
\put(30,30){\line(0,1){10}}
\linethickness{0.3mm}
\put(50,30){\line(0,1){10}}
\linethickness{0.3mm}
\multiput(60,40)(0.21,-0.13){2}{\line(1,0){0.21}}
\multiput(60.42,39.74)(0.21,-0.13){2}{\line(1,0){0.21}}
\multiput(60.85,39.48)(0.22,-0.13){2}{\line(1,0){0.22}}
\multiput(61.28,39.22)(0.22,-0.12){2}{\line(1,0){0.22}}
\multiput(61.72,38.98)(0.22,-0.12){2}{\line(1,0){0.22}}
\multiput(62.16,38.74)(0.22,-0.12){2}{\line(1,0){0.22}}
\multiput(62.6,38.51)(0.22,-0.11){2}{\line(1,0){0.22}}
\multiput(63.04,38.28)(0.22,-0.11){2}{\line(1,0){0.22}}
\multiput(63.49,38.07)(0.23,-0.11){2}{\line(1,0){0.23}}
\multiput(63.95,37.86)(0.23,-0.1){2}{\line(1,0){0.23}}
\multiput(64.4,37.65)(0.23,-0.1){2}{\line(1,0){0.23}}
\multiput(64.86,37.45)(0.23,-0.1){2}{\line(1,0){0.23}}
\multiput(65.32,37.26)(0.23,-0.09){2}{\line(1,0){0.23}}
\multiput(65.79,37.08)(0.47,-0.18){1}{\line(1,0){0.47}}
\multiput(66.25,36.91)(0.47,-0.17){1}{\line(1,0){0.47}}
\multiput(66.72,36.74)(0.47,-0.16){1}{\line(1,0){0.47}}
\multiput(67.2,36.58)(0.48,-0.15){1}{\line(1,0){0.48}}
\multiput(67.67,36.42)(0.48,-0.15){1}{\line(1,0){0.48}}
\multiput(68.15,36.27)(0.48,-0.14){1}{\line(1,0){0.48}}
\multiput(68.63,36.13)(0.48,-0.13){1}{\line(1,0){0.48}}
\multiput(69.11,36)(0.48,-0.12){1}{\line(1,0){0.48}}
\multiput(69.59,35.88)(0.49,-0.12){1}{\line(1,0){0.49}}
\multiput(70.08,35.76)(0.49,-0.11){1}{\line(1,0){0.49}}
\multiput(70.57,35.65)(0.49,-0.1){1}{\line(1,0){0.49}}
\multiput(71.06,35.55)(0.49,-0.09){1}{\line(1,0){0.49}}
\multiput(71.55,35.45)(0.49,-0.09){1}{\line(1,0){0.49}}
\multiput(72.04,35.36)(0.49,-0.08){1}{\line(1,0){0.49}}
\multiput(72.53,35.29)(0.49,-0.07){1}{\line(1,0){0.49}}
\multiput(73.02,35.21)(0.5,-0.06){1}{\line(1,0){0.5}}
\multiput(73.52,35.15)(0.5,-0.06){1}{\line(1,0){0.5}}
\multiput(74.02,35.09)(0.5,-0.05){1}{\line(1,0){0.5}}
\multiput(74.51,35.04)(0.5,-0.04){1}{\line(1,0){0.5}}
\multiput(75.01,35)(0.5,-0.03){1}{\line(1,0){0.5}}
\multiput(75.51,34.96)(0.5,-0.03){1}{\line(1,0){0.5}}
\multiput(76.01,34.94)(0.5,-0.02){1}{\line(1,0){0.5}}
\multiput(76.51,34.92)(0.5,-0.01){1}{\line(1,0){0.5}}
\multiput(77.01,34.91)(0.5,-0){1}{\line(1,0){0.5}}
\multiput(77.51,34.9)(0.5,0){1}{\line(1,0){0.5}}
\multiput(78,34.91)(0.5,0.01){1}{\line(1,0){0.5}}
\multiput(78.5,34.92)(0.5,0.02){1}{\line(1,0){0.5}}
\multiput(79,34.94)(0.5,0.03){1}{\line(1,0){0.5}}
\multiput(79.5,34.97)(0.5,0.03){1}{\line(1,0){0.5}}

\linethickness{0.3mm}
\multiput(100,35)(0.5,-0.06){1}{\line(1,0){0.5}}
\multiput(100.5,34.94)(0.5,-0.05){1}{\line(1,0){0.5}}
\multiput(101,34.89)(0.5,-0.04){1}{\line(1,0){0.5}}
\multiput(101.51,34.85)(0.5,-0.03){1}{\line(1,0){0.5}}
\multiput(102.01,34.82)(0.5,-0.02){1}{\line(1,0){0.5}}
\multiput(102.51,34.8)(0.5,-0.01){1}{\line(1,0){0.5}}
\multiput(103.02,34.79)(0.5,-0.01){1}{\line(1,0){0.5}}
\multiput(103.52,34.78)(0.5,0){1}{\line(1,0){0.5}}
\multiput(104.03,34.78)(0.5,0.01){1}{\line(1,0){0.5}}
\multiput(104.53,34.79)(0.5,0.02){1}{\line(1,0){0.5}}
\multiput(105.04,34.81)(0.5,0.03){1}{\line(1,0){0.5}}
\multiput(105.54,34.84)(0.5,0.04){1}{\line(1,0){0.5}}
\multiput(106.04,34.88)(0.5,0.05){1}{\line(1,0){0.5}}
\multiput(106.55,34.92)(0.5,0.05){1}{\line(1,0){0.5}}
\multiput(107.05,34.98)(0.5,0.06){1}{\line(1,0){0.5}}
\multiput(107.55,35.04)(0.5,0.07){1}{\line(1,0){0.5}}
\multiput(108.05,35.11)(0.5,0.08){1}{\line(1,0){0.5}}
\multiput(108.55,35.19)(0.5,0.09){1}{\line(1,0){0.5}}
\multiput(109.04,35.28)(0.5,0.1){1}{\line(1,0){0.5}}
\multiput(109.54,35.37)(0.49,0.1){1}{\line(1,0){0.49}}
\multiput(110.03,35.48)(0.49,0.11){1}{\line(1,0){0.49}}
\multiput(110.52,35.59)(0.49,0.12){1}{\line(1,0){0.49}}
\multiput(111.01,35.71)(0.49,0.13){1}{\line(1,0){0.49}}
\multiput(111.5,35.84)(0.49,0.14){1}{\line(1,0){0.49}}
\multiput(111.99,35.98)(0.48,0.15){1}{\line(1,0){0.48}}
\multiput(112.47,36.13)(0.48,0.15){1}{\line(1,0){0.48}}
\multiput(112.95,36.28)(0.48,0.16){1}{\line(1,0){0.48}}
\multiput(113.43,36.44)(0.47,0.17){1}{\line(1,0){0.47}}
\multiput(113.9,36.61)(0.47,0.18){1}{\line(1,0){0.47}}
\multiput(114.37,36.79)(0.23,0.09){2}{\line(1,0){0.23}}
\multiput(114.84,36.98)(0.23,0.1){2}{\line(1,0){0.23}}
\multiput(115.31,37.17)(0.23,0.1){2}{\line(1,0){0.23}}
\multiput(115.77,37.38)(0.23,0.11){2}{\line(1,0){0.23}}
\multiput(116.23,37.59)(0.23,0.11){2}{\line(1,0){0.23}}
\multiput(116.69,37.81)(0.23,0.11){2}{\line(1,0){0.23}}
\multiput(117.14,38.03)(0.22,0.12){2}{\line(1,0){0.22}}
\multiput(117.58,38.26)(0.22,0.12){2}{\line(1,0){0.22}}
\multiput(118.03,38.5)(0.22,0.12){2}{\line(1,0){0.22}}
\multiput(118.47,38.75)(0.22,0.13){2}{\line(1,0){0.22}}
\multiput(118.9,39.01)(0.22,0.13){2}{\line(1,0){0.22}}
\multiput(119.33,39.27)(0.21,0.14){2}{\line(1,0){0.21}}
\multiput(119.76,39.54)(0.21,0.14){2}{\line(1,0){0.21}}

\put(90,35){\makebox(0,0)[cc]{$\cdots$}}

\put(30,25){\makebox(0,0)[cc]{$\theta_{2}$}}

\put(110,25){\makebox(0,0)[cc]{$\theta_{m}$}}

\put(40,60){\makebox(0,0)[cc]{$a_{3}=k+1$}}
\put(40,65){\makebox(0,0)[cc]{$\theta_{3}$}}

\put(30,75){\makebox(0,0)[cc]{$a_{2}$}}

\put(50,25){\makebox(0,0)[cc]{$\theta_{4}$}}

\put(65,25){\makebox(0,0)[cc]{$\theta_{5}$}}

\put(50,75){\makebox(0,0)[cc]{$a_{4}$}}

\end{picture}

Consider the diagram of Case 2; Here we can argue that the only possible value for $l$ in both diagrams is $l=2$. For $l=1$ consider $\theta_{2}$ and $\theta_{3}$ in the first diagram. Note that $\tau^{\bullet}(\theta_{2})\leq \tau^{\bullet}(\theta_{3})$ but $\tilde{\tau}(\theta_{1}+\theta_{2})\ngtr \tilde{\tau}(\theta_{3}+\cdots+ \theta_{m})$ hence $S^{2}_{\textbf{E}_{2}}(\theta_{1},\cdots,\theta_{m})=0$.  Setting $l=2$ means that we need to choose $0=b_{0}<b_{1}<b_{2}=m$ so that  $b_{i}$, $i=0,1,2$, satisfy the conditions in Definition \eqref{eqU2}.  Note that one can choose $b_{1}=2$ or $b_{1}=3$. We denote these values by choice (a) and (b) respectively. Set $b_{1}=2$ and define:
\begin{align*}
U^{a}_{k,k+2}:=\sum_{\Lambda}\frac{-1}{2}S^{2}_{\textbf{E}_{2}}(\theta_{1},\theta_{2})\cdot S^{2}_{\textbf{E}_{2}}(\theta_{3},\cdots,\theta_{m})\cdot \prod^{m}_{i=1}\frac{1}{(a_{i}-a_{i-1})!},
\end{align*}
Following similar computations, and via the result of \cite[Lemma 13.9]{a30}, we obtain the following identity:
\begin{align}\label{case21}
U^{a}_{k,k+2}=(-\frac{1}{2})\cdot\frac{1}{(k-1)!}\cdot \frac{(-1)^{(n-(k+2))}}{(n-(k+2))!}
\end{align}
Similarly, set $b_{1}=3$ and define:
\begin{align*}
U^{b}_{k,k+2}=\sum_{\Lambda}\frac{-1}{2}S^{2}_{\textbf{E}_{2}}(\theta_{1},\theta_{2},\theta_{3})\cdot S^{2}_{\textbf{E}_{2}}(\theta_{4},\cdots,\theta_{m})\cdot \prod^{m}_{i=1}\frac{1}{(a_{i}-a_{i-1})!},
\end{align*}
and obtain 
\begin{align*}
U^{b}_{k,k+2}=\sum_{\Lambda}\frac{1}{2}(-1)^{(m-4)}\cdot \prod^{m}_{i=1}\frac{1}{(a_{i}-a_{i-1})!}.
\end{align*}
By adding the contributions due to the two choices of $b_{1}=2$ and $b_{1}=3$, we obtain
\begin{align}\label{eshshak}
&
U^{2}_{k,t}=U^{a}_{k,k+2}+U^{b}_{k,k+2}=\notag\\
&
(-\frac{1}{2})\cdot\frac{1}{(k-1)!}\cdot  \frac{(-1)^{(n-(k+2))}}{(n-(k+2))!}+\frac{1}{2}\cdot\frac{1}{(k-1)!}\cdot \frac{(-1)^{(n-(k+2))}}{(n-(k+2))!}=0
\end{align}
\subsubsection{Computations in Case 3:} 
Case 3 represents the configurations where for some $1 \leq k<t\leq n$, there exists at least 2 elements of type $(\beta_{i},0)$ and $(\beta_{j},0)$  between the two elements of type $(0,1)$ such that $\beta_{i}\neq 0$ and $\beta_{j}\neq 0$.

\ifx\JPicScale\undefined\def\JPicScale{1}\fi
\unitlength \JPicScale mm
\begin{picture}(126.11,60)(-5,20)
\linethickness{0.3mm}
\put(0,45){\line(1,0){120}}
\put(0,45){\makebox(0,0)[cc]{$\bullet$}}

\put(10,45){\makebox(0,0)[cc]{$\bullet$}}

\put(20,45){\makebox(0,0)[cc]{$\bullet$}}

\put(30,45){\makebox(0,0)[cc]{$\bullet$}}

\put(40,45){\makebox(0,0)[cc]{$\bullet$}}

\put(50,45){\makebox(0,0)[cc]{$\bullet$}}

\put(60,45){\makebox(0,0)[cc]{$\bullet$}}

\put(70,45){\makebox(0,0)[cc]{$\bullet$}}

\put(80,45){\makebox(0,0)[cc]{$\bullet$}}

\put(90,45){\makebox(0,0)[cc]{$\bullet$}}

\put(100,45){\makebox(0,0)[cc]{$\bullet$}}

\put(110,45){\makebox(0,0)[cc]{$\bullet$}}

\put(120,45){\makebox(0,0)[cc]{$\bullet$}}

\put(0,75){\makebox(0,0)[cc]{$\text{Case 3}$}}

\linethickness{0.3mm}
\put(10,45){\line(0,1){10}}
\linethickness{0.3mm}
\put(120,45){\line(0,1){10}}
\put(5,30){\makebox(0,0)[cc]{$\theta_{1}$}}

\linethickness{0.3mm}
\put(70,45){\line(0,1){20}}
\linethickness{0.3mm}
\put(20,45){\line(0,1){20}}
\put(20,70){\makebox(0,0)[cc]{$(0,1)$}}

\put(70,70){\makebox(0,0)[cc]{$(0,1)$}}

\linethickness{0.3mm}
\put(60,45){\line(0,1){10}}
\linethickness{0.3mm}
\multiput(0,40)(0.02,-0.5){1}{\line(0,-1){0.5}}
\multiput(0.02,39.5)(0.07,-0.5){1}{\line(0,-1){0.5}}
\multiput(0.09,39)(0.12,-0.49){1}{\line(0,-1){0.49}}
\multiput(0.21,38.51)(0.17,-0.47){1}{\line(0,-1){0.47}}
\multiput(0.38,38.03)(0.11,-0.23){2}{\line(0,-1){0.23}}
\multiput(0.6,37.58)(0.13,-0.22){2}{\line(0,-1){0.22}}
\multiput(0.86,37.15)(0.1,-0.13){3}{\line(0,-1){0.13}}
\multiput(1.17,36.75)(0.12,-0.12){3}{\line(0,-1){0.12}}
\multiput(1.52,36.38)(0.13,-0.11){3}{\line(1,0){0.13}}
\multiput(1.9,36.05)(0.21,-0.15){2}{\line(1,0){0.21}}
\multiput(2.31,35.76)(0.22,-0.12){2}{\line(1,0){0.22}}
\multiput(2.75,35.51)(0.23,-0.1){2}{\line(1,0){0.23}}
\multiput(3.21,35.31)(0.48,-0.15){1}{\line(1,0){0.48}}
\multiput(3.69,35.16)(0.49,-0.1){1}{\line(1,0){0.49}}
\multiput(4.19,35.06)(0.5,-0.05){1}{\line(1,0){0.5}}
\put(4.69,35.01){\line(1,0){0.5}}
\multiput(5.19,35.01)(0.5,0.05){1}{\line(1,0){0.5}}
\multiput(5.69,35.06)(0.49,0.1){1}{\line(1,0){0.49}}
\multiput(6.19,35.16)(0.48,0.15){1}{\line(1,0){0.48}}
\multiput(6.67,35.31)(0.23,0.1){2}{\line(1,0){0.23}}
\multiput(7.13,35.51)(0.22,0.12){2}{\line(1,0){0.22}}
\multiput(7.57,35.76)(0.21,0.15){2}{\line(1,0){0.21}}
\multiput(7.98,36.05)(0.13,0.11){3}{\line(1,0){0.13}}
\multiput(8.36,36.38)(0.12,0.12){3}{\line(0,1){0.12}}
\multiput(8.71,36.75)(0.1,0.13){3}{\line(0,1){0.13}}
\multiput(9.02,37.15)(0.13,0.22){2}{\line(0,1){0.22}}
\multiput(9.28,37.58)(0.11,0.23){2}{\line(0,1){0.23}}
\multiput(9.5,38.03)(0.17,0.47){1}{\line(0,1){0.47}}
\multiput(9.67,38.51)(0.12,0.49){1}{\line(0,1){0.49}}
\multiput(9.79,39)(0.07,0.5){1}{\line(0,1){0.5}}
\multiput(9.86,39.5)(0.02,0.5){1}{\line(0,1){0.5}}

\put(10,60){\makebox(0,0)[cc]{$a_{1}=k-1$}}

\put(120,60){\makebox(0,0)[cc]{$a_{m}$}}

\linethickness{0.3mm}
\put(20,30){\line(0,1){10}}
\linethickness{0.3mm}
\multiput(79.9,40.1)(0.12,-0.12){3}{\line(1,0){0.12}}
\multiput(80.26,39.75)(0.12,-0.11){3}{\line(1,0){0.12}}
\multiput(80.62,39.41)(0.13,-0.11){3}{\line(1,0){0.13}}
\multiput(81,39.07)(0.13,-0.11){3}{\line(1,0){0.13}}
\multiput(81.38,38.75)(0.13,-0.1){3}{\line(1,0){0.13}}
\multiput(81.78,38.44)(0.2,-0.15){2}{\line(1,0){0.2}}
\multiput(82.18,38.15)(0.21,-0.14){2}{\line(1,0){0.21}}
\multiput(82.59,37.86)(0.21,-0.14){2}{\line(1,0){0.21}}
\multiput(83.01,37.59)(0.21,-0.13){2}{\line(1,0){0.21}}
\multiput(83.44,37.32)(0.22,-0.13){2}{\line(1,0){0.22}}
\multiput(83.87,37.07)(0.22,-0.12){2}{\line(1,0){0.22}}
\multiput(84.31,36.83)(0.22,-0.11){2}{\line(1,0){0.22}}
\multiput(84.76,36.61)(0.23,-0.11){2}{\line(1,0){0.23}}
\multiput(85.21,36.39)(0.23,-0.1){2}{\line(1,0){0.23}}
\multiput(85.67,36.19)(0.23,-0.09){2}{\line(1,0){0.23}}
\multiput(86.14,36.01)(0.47,-0.17){1}{\line(1,0){0.47}}
\multiput(86.61,35.83)(0.47,-0.16){1}{\line(1,0){0.47}}
\multiput(87.08,35.67)(0.48,-0.15){1}{\line(1,0){0.48}}
\multiput(87.56,35.53)(0.48,-0.13){1}{\line(1,0){0.48}}
\multiput(88.04,35.39)(0.49,-0.12){1}{\line(1,0){0.49}}
\multiput(88.53,35.27)(0.49,-0.11){1}{\line(1,0){0.49}}
\multiput(89.02,35.17)(0.49,-0.09){1}{\line(1,0){0.49}}
\multiput(89.51,35.08)(0.49,-0.08){1}{\line(1,0){0.49}}
\multiput(90.01,35)(0.5,-0.06){1}{\line(1,0){0.5}}
\multiput(90.5,34.94)(0.5,-0.05){1}{\line(1,0){0.5}}
\multiput(91,34.89)(0.5,-0.04){1}{\line(1,0){0.5}}
\multiput(91.5,34.85)(0.5,-0.02){1}{\line(1,0){0.5}}
\multiput(92,34.83)(0.5,-0.01){1}{\line(1,0){0.5}}
\multiput(92.5,34.82)(0.5,0.01){1}{\line(1,0){0.5}}
\multiput(93,34.83)(0.5,0.02){1}{\line(1,0){0.5}}
\multiput(93.5,34.85)(0.5,0.04){1}{\line(1,0){0.5}}
\multiput(94,34.89)(0.5,0.05){1}{\line(1,0){0.5}}
\multiput(94.5,34.94)(0.5,0.06){1}{\line(1,0){0.5}}

\linethickness{0.3mm}
\multiput(105,35)(0.5,-0.06){1}{\line(1,0){0.5}}
\multiput(105.5,34.94)(0.5,-0.05){1}{\line(1,0){0.5}}
\multiput(105.99,34.89)(0.5,-0.03){1}{\line(1,0){0.5}}
\multiput(106.49,34.86)(0.5,-0.02){1}{\line(1,0){0.5}}
\multiput(106.99,34.84)(0.5,-0.01){1}{\line(1,0){0.5}}
\multiput(107.49,34.83)(0.5,0.01){1}{\line(1,0){0.5}}
\multiput(107.98,34.84)(0.5,0.02){1}{\line(1,0){0.5}}
\multiput(108.48,34.86)(0.5,0.03){1}{\line(1,0){0.5}}
\multiput(108.98,34.89)(0.5,0.05){1}{\line(1,0){0.5}}
\multiput(109.48,34.94)(0.5,0.06){1}{\line(1,0){0.5}}
\multiput(109.97,35)(0.49,0.07){1}{\line(1,0){0.49}}
\multiput(110.47,35.07)(0.49,0.09){1}{\line(1,0){0.49}}
\multiput(110.96,35.16)(0.49,0.1){1}{\line(1,0){0.49}}
\multiput(111.45,35.25)(0.49,0.11){1}{\line(1,0){0.49}}
\multiput(111.93,35.37)(0.48,0.13){1}{\line(1,0){0.48}}
\multiput(112.41,35.49)(0.48,0.14){1}{\line(1,0){0.48}}
\multiput(112.89,35.63)(0.48,0.15){1}{\line(1,0){0.48}}
\multiput(113.37,35.78)(0.47,0.16){1}{\line(1,0){0.47}}
\multiput(113.84,35.94)(0.47,0.18){1}{\line(1,0){0.47}}
\multiput(114.31,36.12)(0.23,0.09){2}{\line(1,0){0.23}}
\multiput(114.77,36.31)(0.23,0.1){2}{\line(1,0){0.23}}
\multiput(115.22,36.51)(0.23,0.11){2}{\line(1,0){0.23}}
\multiput(115.68,36.72)(0.22,0.11){2}{\line(1,0){0.22}}
\multiput(116.12,36.95)(0.22,0.12){2}{\line(1,0){0.22}}
\multiput(116.56,37.19)(0.22,0.12){2}{\line(1,0){0.22}}
\multiput(116.99,37.43)(0.21,0.13){2}{\line(1,0){0.21}}
\multiput(117.42,37.69)(0.21,0.14){2}{\line(1,0){0.21}}
\multiput(117.84,37.97)(0.21,0.14){2}{\line(1,0){0.21}}
\multiput(118.25,38.25)(0.2,0.15){2}{\line(1,0){0.2}}
\multiput(118.65,38.54)(0.13,0.1){3}{\line(1,0){0.13}}
\multiput(119.05,38.85)(0.13,0.1){3}{\line(1,0){0.13}}
\multiput(119.43,39.16)(0.13,0.11){3}{\line(1,0){0.13}}
\multiput(119.81,39.49)(0.12,0.11){3}{\line(1,0){0.12}}

\put(20,25){\makebox(0,0)[cc]{$\theta_{2}$}}

\put(115,30){\makebox(0,0)[cc]{$\theta_{m}$}}

\put(100,35){\makebox(0,0)[cc]{$\cdots$}}

\put(45,55){\makebox(0,0)[cc]{$\cdots$}}

\put(55,60){\makebox(0,0)[cc]{$a_{q-1}=t-1$}}

\linethickness{0.3mm}
\multiput(30.38,38.56)(0.1,-0.13){3}{\line(0,-1){0.13}}
\multiput(30.69,38.16)(0.11,-0.13){3}{\line(0,-1){0.13}}
\multiput(31.03,37.77)(0.12,-0.12){3}{\line(0,-1){0.12}}
\multiput(31.38,37.41)(0.13,-0.11){3}{\line(1,0){0.13}}
\multiput(31.76,37.07)(0.13,-0.11){3}{\line(1,0){0.13}}
\multiput(32.16,36.75)(0.21,-0.15){2}{\line(1,0){0.21}}
\multiput(32.57,36.45)(0.21,-0.14){2}{\line(1,0){0.21}}
\multiput(33,36.18)(0.22,-0.12){2}{\line(1,0){0.22}}
\multiput(33.44,35.93)(0.23,-0.11){2}{\line(1,0){0.23}}
\multiput(33.9,35.71)(0.24,-0.1){2}{\line(1,0){0.24}}
\multiput(34.37,35.52)(0.48,-0.17){1}{\line(1,0){0.48}}
\multiput(34.85,35.35)(0.49,-0.14){1}{\line(1,0){0.49}}
\multiput(35.34,35.21)(0.5,-0.11){1}{\line(1,0){0.5}}
\multiput(35.84,35.1)(0.5,-0.08){1}{\line(1,0){0.5}}
\multiput(36.34,35.02)(0.51,-0.05){1}{\line(1,0){0.51}}
\multiput(36.85,34.97)(0.51,-0.02){1}{\line(1,0){0.51}}
\multiput(37.36,34.95)(0.51,0.01){1}{\line(1,0){0.51}}
\multiput(37.86,34.95)(0.51,0.04){1}{\line(1,0){0.51}}
\multiput(38.37,34.99)(0.5,0.06){1}{\line(1,0){0.5}}
\multiput(38.88,35.05)(0.5,0.09){1}{\line(1,0){0.5}}
\multiput(39.38,35.15)(0.49,0.12){1}{\line(1,0){0.49}}

\linethickness{0.3mm}
\multiput(49.86,35.66)(0.48,-0.12){1}{\line(1,0){0.48}}
\multiput(50.34,35.54)(0.48,-0.09){1}{\line(1,0){0.48}}
\multiput(50.82,35.45)(0.49,-0.06){1}{\line(1,0){0.49}}
\multiput(51.31,35.39)(0.49,-0.04){1}{\line(1,0){0.49}}
\multiput(51.81,35.35)(0.49,-0.01){1}{\line(1,0){0.49}}
\multiput(52.3,35.34)(0.49,0.01){1}{\line(1,0){0.49}}
\multiput(52.79,35.35)(0.49,0.04){1}{\line(1,0){0.49}}
\multiput(53.28,35.39)(0.49,0.07){1}{\line(1,0){0.49}}
\multiput(53.77,35.45)(0.48,0.09){1}{\line(1,0){0.48}}
\multiput(54.26,35.55)(0.48,0.12){1}{\line(1,0){0.48}}
\multiput(54.74,35.66)(0.47,0.14){1}{\line(1,0){0.47}}
\multiput(55.21,35.8)(0.46,0.17){1}{\line(1,0){0.46}}
\multiput(55.67,35.97)(0.23,0.1){2}{\line(1,0){0.23}}
\multiput(56.13,36.16)(0.22,0.11){2}{\line(1,0){0.22}}
\multiput(56.57,36.37)(0.22,0.12){2}{\line(1,0){0.22}}
\multiput(57,36.61)(0.21,0.13){2}{\line(1,0){0.21}}
\multiput(57.42,36.87)(0.2,0.14){2}{\line(1,0){0.2}}
\multiput(57.83,37.15)(0.13,0.1){3}{\line(1,0){0.13}}
\multiput(58.22,37.46)(0.12,0.11){3}{\line(1,0){0.12}}
\multiput(58.59,37.78)(0.12,0.11){3}{\line(1,0){0.12}}
\multiput(58.95,38.12)(0.11,0.12){3}{\line(0,1){0.12}}
\multiput(59.28,38.48)(0.11,0.13){3}{\line(0,1){0.13}}

\put(40,30){\makebox(0,0)[cc]{$\theta_{3}$}}

\put(55,30){\makebox(0,0)[cc]{$\theta_{q-1}$}}

\linethickness{0.3mm}
\put(70,30){\line(0,1){10}}
\put(70,25){\makebox(0,0)[cc]{$\theta_{q}$}}

\put(85,30){\makebox(0,0)[cc]{$\theta_{q+1}$}}

\put(45,35){\makebox(0,0)[cc]{$\cdots$}}

\put(20,20){\makebox(0,0)[cc]{$a_{2}=k$}}

\put(70,20){\makebox(0,0)[cc]{$a_{q}=t$}}

\end{picture}

Following similar analysis it turns out that the contributions for case 3 also add up to zero, i.e: 
\begin{align} \label{case331}
&
U^{3}_{k,t}=\sum_{0<a_{0}<\cdots<a_{m}}\frac{(-1)}{2}\cdot\Bigg[(-1)^{(q-3)}\cdot (-1)^{(m-q)}+(-1)^{(q-2)}\notag\\
&
\cdot (-1)^{(m-q)}\Bigg]\cdot \prod^{m}_{i=1}\frac{1}{(a_{i}-a_{i-1})!}=0.\notag\\
\end{align}
We conclude that the contributions in Cases 1, 2 and 3 are all equal to zero. Hence $$U_{k,t}=U^{1}_{k,t}+U^{2}_{k,t}+U^{3}_{k,t}=0.$$ This finishes the proof of Lemma \ref{UB}.
\end{proof}
Recall that for $\textbf{E}_{1}$ in Equation \eqref{A}, the $(n-1)$'th class, $\beta_{n-1}$ was placed in the $n$'th spot, hence by change of variable $n$ to $l-1$, the equation for $\textbf{E}_{1}$ is given as:
\begin{align}\label{coeff-A}  
&
\textbf{E}_{1}=\sum_{\begin{subarray}{1} \beta_{1},\cdots, \beta_{l}\in C(\mathcal{A}_{p})\\ \beta_{1}+\cdots+\beta_{l}=\beta \end{subarray}}\frac{(-1)^{l}}{(l)!}\cdot\bar{\epsilon}^{(0,2)}(\tau^{\bullet})*\cdots *\bar{\epsilon}^{(\beta_{l},0)}(\tau^{\bullet})
+\sum_{\begin{subarray}{1}1\leq k\leq l\\\beta_{1},\cdots, \beta_{l}\in C(\mathcal{A}_{p})\\ \beta_{1}+\cdots+\beta_{l}=\beta \end{subarray}}\frac{(-1)^{l-k}}{(k-1)!(l-k)!}\notag\\
&
\cdot\bar{\epsilon}^{(\beta_{1},0)}(\tau^{\bullet})*\cdots 
\cdots*\bar{\epsilon}^{(\beta_{k},0)}(\tau^{\bullet})*\bar{\epsilon}^{(0,2)}(\tau^{\bullet})*\bar{\epsilon}^{(\beta_{k+1},0)}(\tau^{\bullet})*\cdots*\bar{\epsilon}^{(\beta_{l},0)}(\tau^{\bullet})\notag\\
&
=\sum_{\begin{subarray}{1}0\leq k\leq l\\ \beta_{1},\cdots, \beta_{l}\in C(\mathcal{A}_{p})\\ \beta_{1}+\cdots+\beta_{l}=\beta \end{subarray}}\frac{(-1)^{l-k}}{k!(l-k)!}\cdot\bar{\epsilon}^{(\beta_{1},0)}(\tau^{\bullet})*\cdots *\bar{\epsilon}^{(\beta_{k},0)}(\tau^{\bullet})
*\bar{\epsilon}^{(0,2)}(\tau^{\bullet})*\bar{\epsilon}^{(\beta_{k+1},0)}(\tau^{\bullet})\notag\\
&
*\cdots*\bar{\epsilon}^{(\beta_{l},0)}(\tau^{\bullet})\notag\\
\end{align}
The coefficients in \eqref{coeff-A} are precisely equal to those appearing in  \cite[Equation (13.25)]{a30}. By rewriting the product of stack functions in terms of a nested brackets we obtain an equation analogous to the computation of Joyce and Song in \cite[Equation 13.26]{a30}. Simply replace $\bar{\epsilon}^{(0,1)}(\tau^{\bullet})$ in \cite[Equation (13.26)]{a30} with $\bar{\epsilon}^{(0,2)}(\tau^{\bullet})$ and obtain:
\begin{align}\label{epsilon2}
&
\bar{\epsilon}^{(\beta,2)}(\tilde{\tau})=\sum_{1\leq l,\beta_{1}+\cdots+\beta_{l}=\beta}\frac{(-1)^{l}}{l!}[[\cdots[[\bar{\epsilon}^{(0,2)}(\tau^{\bullet}),\bar{\epsilon}^{(\beta_{1},0)}(\tau^{\bullet})],\bar{\epsilon}^{(\beta_{2},0)}(\tau^{\bullet})],
\notag\\
&
\cdots],\bar{\epsilon}^{(\beta_{l},0)}(\tau^{\bullet})]\notag\\
\end{align}
\subsection{Wallcrossing for numerical invariants}\label{stable=semistable}
In this section we use Equation \eqref{epsilon2} to compute the wallcrossing identity between invariants of $\tilde{\tau}$-semistable objects in $\mathcal{B}_{p}$ and the generalized Donaldson--Thomas invariants. 
\begin{prop}\label{wall-cross}
(a). Let $\nu^{(\beta,0)}_{\mathfrak{M}_{\mathcal{B}_{p}}}$ and $\nu^{\beta}_{\mathfrak{M}}$ denote Behrend's constructible functions \cite[Section 1.3]{a1} on the moduli stack of objects in $\mathcal{B}_{p}$ (with fixed class $(\beta,0)$) and the moduli stack of sheaves with class $\beta$ respectively. The following identity holds true:
\begin{equation*}
\nu^{(\beta,0)}_{\mathfrak{M}_{\mathcal{B}_{p}}}\equiv \pi_{0}^{*}(\nu^{\beta}_{\mathfrak{M}})
\end{equation*}
where $\pi_{0}$ is the map $\pi_{0}:\mathfrak{M}^{(\beta,0)}_{\mathcal{B}_{p}}\rightarrow \mathfrak{M}^{\beta}$ which sends $(F,0,0)$ with $[(F,0,0)]=(\beta,0)$ to $F$ with Chern character $\beta$.\\
\end{prop}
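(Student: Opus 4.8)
The plan is to deduce the identity from the fact that $\pi_0$ is an \emph{isomorphism} of Artin stacks, together with the intrinsic nature of Behrend's constructible function. Since $\nu_{\mathfrak{M}}$ is determined by $\mathfrak{M}$ alone (its construction is local on $\mathfrak{M}$ and does not depend on presenting $\mathfrak{M}$ in one way or another), once $\pi_0$ is known to identify the two stacks it must carry one Behrend function to the other, giving $\nu^{(\beta,0)}_{\mathfrak{M}_{\mathcal{B}_p}}\equiv \pi_0^{*}(\nu^{\beta}_{\mathfrak{M}})$ on the nose. Thus the entire content of the statement is the identification $\mathfrak{M}^{(\beta,0)}_{\mathcal{B}_p}\cong \mathfrak{M}^{\beta}$ (more precisely, $\cong$ the Gieseker-semistable locus of $\mathfrak{M}^{\beta}$) effected by $\pi_0$, followed by a formal invocation of functoriality of Behrend functions.

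First I would verify that $\pi_0$ is an isomorphism. Any object of $\mathcal{B}_p$ with numerical class $(\beta,0)$ has $\dim V=0$, hence $V=0$ and $\phi=0$; so every such object is of the form $(F,0,0)$ with $F\in\operatorname{Obj}(\mathcal{A}_p)$ and $\operatorname{Ch}(F)=\beta$. By the Remark recalling that $\mathcal{A}_p$ embeds in $\mathcal{B}_p$ as a full and faithful subcategory via $F\mapsto(F,0,0)$, one has in particular $\operatorname{Aut}_{\mathcal{B}_p}(F,0,0)=\operatorname{Aut}_{\mathcal{A}_p}(F)$, so $\pi_0$ is faithful, hence representable; and the same functor on $S$-flat families (Definition~\ref{Bp-sflat}, with $M$ the zero bundle) matches the complex $\pi_S^{*}0\otimes\pi_X^{*}\mathcal{O}_X(-n)\to\mathcal{F}$ with the sheaf $\mathcal{F}$, so $\pi_0$ is an equivalence of groupoids over every $\mathbb{C}$-scheme $S$ onto the moduli of Gieseker-semistable sheaves with reduced Hilbert polynomial $p$ and class $\beta$. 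Hence $\pi_0$ is an isomorphism onto $\mathfrak{M}^{\beta}_{ss}(\tau)\subseteq\mathfrak{M}^{\beta}$, in agreement with Proposition~\ref{stackprop}(a). It is worth noting, as a consistency check, that under the embedding functor $\mathfrak{F}\colon\mathcal{B}_p\to D(X)$ the complex attached to $(F,0,0)$ is $[\,0\otimes\mathcal{O}_X(-n)\to F\,]$, i.e.\ $F$ placed in degree $0$, so the derived deformation theory $\operatorname{Ext}^{\bullet}_{D(X)}\big((F,0,0),(F,0,0)\big)=\operatorname{Ext}^{\bullet}_X(F,F)$ coincides with that of the corresponding sheaf, which makes the isomorphism of stacks (and of any obstruction-theoretic data one attaches to them) transparent.

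Finally, with $\pi_0$ an isomorphism I would close by invoking the behaviour of Behrend functions under smooth $1$-morphisms from \cite{a30} (Theorem~4.3): for a smooth $\mu\colon\mathfrak{M}\to\mathfrak{N}$ of relative dimension $n$ one has $\nu_{\mathfrak{M}}\equiv(-1)^{n}\mu^{*}(\nu_{\mathfrak{N}})$, and for an isomorphism ($n=0$) this gives $\nu_{\mathfrak{M}}\equiv\mu^{*}(\nu_{\mathfrak{N}})$. Applying this to $\mu=\pi_0$ yields the claim; if one interprets $\mathfrak{M}^{\beta}$ as the moduli stack of \emph{all} coherent sheaves of Chern character $\beta$, then $\pi_0$ is an open immersion onto the semistable locus and one uses in addition that Behrend functions restrict to open substacks, so the conclusion is unchanged. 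The main obstacle is entirely in the first step: carefully matching the $S$-flat families of Definition~\ref{Bp-sflat} with $S$-flat families of sheaves and confirming that $\pi_0$ is representable and an isomorphism of stacks rather than merely a bijection on isomorphism classes of objects. Once that verification is in place, the Behrend-function identity is immediate.
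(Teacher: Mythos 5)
Your proposal is correct, but it is worth noting that the paper does not actually argue this statement at all: its ``proof'' is a one-line citation to \cite{a30} (Proposition 13.12), which is the Joyce--Song result relating the Behrend function of $\mathfrak{M}_{\mathcal{B}_p}$ to that of $\mathfrak{M}$ (their statement also covers the harder classes $(\beta,d)$ with $d>0$, where nontrivial signs and the dimension of $\operatorname{Hom}(\mathcal{O}_X(-n),F)$ enter). What you supply instead is a direct, self-contained proof of exactly the $(\beta,0)$ case: since $\dim V=0$ forces $V=0$ and $\phi=0$, the groupoid of $S$-flat families of class-$(\beta,0)$ objects in Definition \ref{Bp-sflat} (with $M=0$) is equivalent to the groupoid of $S$-flat families of $\tau$-semistable sheaves, automorphism groups included, so $\pi_0$ is an isomorphism onto the (open) semistable locus; the identity then follows from the intrinsic/local nature of the Behrend function, i.e.\ the smooth-pullback formula $\nu_{\mathfrak{M}}\equiv(-1)^{n}\mu^{*}\nu_{\mathfrak{N}}$ with $n=0$, together with compatibility with open immersions and the fact that $\mathfrak{M}^{(\beta,0)}_{\mathcal{B}_p}$ is open and closed in $\mathfrak{M}_{\mathcal{B}_p}$ (numerical class being locally constant in flat families --- a point you could make explicit, since $\nu^{(\beta,0)}_{\mathfrak{M}_{\mathcal{B}_p}}$ is a priori the restriction of the Behrend function of the ambient stack). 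Your route buys a transparent elementary argument that avoids the general machinery of \cite{a30} (Proposition 13.12); the citation route buys uniformity with the $d>0$ cases that the wall-crossing computation also needs. Both are valid; your verification that $\pi_0$ is an isomorphism of stacks rather than merely a bijection on points is precisely the content that the citation hides.
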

\begin{proof} 
This is proven in \cite[Proposition 13.12]{a30}. 
\end{proof}
\begin{defn}
\cite[Definition 13.3]{a30}. Define the Euler form on $\mathcal{B}_{p}$ as $\bar{\chi}_{\mathcal{B}_{p}}:K(\mathcal{B}_{p})\times K(\mathcal{B}_{p})\to \mathbb{Z}$ such that
\begin{equation}
\bar{\chi}_{\mathcal{B}_{p}}((\beta,d), (\gamma,e))=\bar{\chi}(\beta,\gamma)-d\bar{\chi}([\mathcal{O}_{X}(-n)], \gamma)+e\bar{\chi}([\mathcal{O}_{X}(-n)],\beta),
\end{equation} 
where $\bar{\chi}()$ is the Euler form on $K(\text{coh}(X))$.
\end{defn}
\begin{defn}\label{13.11}\footnote{Look at \cite[Definition 13.11]{a30}. }
Define $\mathcal{S}$ to be the subset of $(\beta,d)$ in $C(\mathcal{B}_{p})\subset K(\mathcal{B}_{p})$ such that $P_{\beta}(t)=\frac{k}{d!}p(t)$ for $k=0,\cdots, N$ and $d=0$ or 1 or 2. Then $\mathcal{S}$ is a finite set \cite[Theorem 3.37]{a9}. Define a Lie algebra $\tilde{L}(\mathcal{B}_{p})$ to be the $\mathbb{Q}$-vector space with the basis of symbols $\tilde{\lambda}^{(\beta,d)}$ with $(\beta,d)\in \mathcal{S}$ with the Lie bracket
\begin{equation}\label{lambda}
[\tilde{\lambda}^{(\beta,d)},\tilde{\lambda}^{(\gamma,e)}]=(-1)^{\bar{\chi}_{\mathcal{B}_{p}}((\beta,d),(\gamma,e))}\bar{\chi}_{\mathcal{B}_{p}}((\beta,d),(\gamma,e))\tilde{\lambda}^{(\beta+\gamma,d+e)}
\end{equation}
for $(\beta+\gamma,d+e)\in \mathcal{S}$ and $[\tilde{\lambda}^{(\beta,d)},\tilde{\lambda}^{(\gamma,e)}]=0$ otherwise. Here It can be seen that $\bar{\chi}_{\mathcal{B}_{p}}$ is antisymmetric and hence, equation \eqref{lambda} satisfies the Jacobi-identity and that makes $\tilde{L}(\mathcal{B}_{p})$ into a finite-dimensional nilpotent Lie algebra over $\mathbb{Q}$. 
\end{defn}
In order to define the Lie algebra morphism $\tilde{\Psi}^{\mathcal{B}_{p}}:\textbf{SF}^{ind}_{al}\mathfrak{M}_{\mathcal{B}_{p}}\rightarrow \tilde{L}(\mathcal{B}_{p})$, apply \cite[Definition 5.13]{a30} to the moduli stack $\mathfrak{M}_{\mathcal{B}_{p}}$ and $\tilde{L}(\mathcal{B}_{p})$. Now we study the image of $\bar{\epsilon}^{(\beta,2)}(\tilde{\tau})$, $\bar{\epsilon}^{(0,2)}(\tau^{\bullet})$, $\bar{\epsilon}^{(\beta_{i},0)}(\tau^{\bullet})$ and $\bar{\epsilon}^{(0,1)}(\tau^{\bullet})$ under the morphism $\tilde{\Psi}^{\mathcal{B}_{p}}$:
\begin{defn}\label{Bp-ss-defn}
Define the invariant $\textbf{B}^{ss}_{p}(X,\beta,2,\tilde{\tau})$ associated to $\tilde{\tau}$-semistable objects of type $(\beta,2)$ in $\mathcal{B}_{p}$ by$$\tilde{\Psi}^{\mathcal{B}_{p}}(\bar{\epsilon}^{(\beta,2)}(\tilde{\tau}))=\textbf{B}^{ss}_{p}(X,\beta,2,\tilde{\tau})\cdot\tilde{\lambda}^{(\beta,2)},$$
where $\tilde{\Psi}^{\mathcal{B}_{p}}$ is given by the Lie algebra morphism defined in \cite[Section 13.4]{a30}.
\end{defn}
The next identity is proved by Joyce and Song in \cite[Section 13.5]{a30}:
\begin{equation}\label{0,1} 
\tilde{\Psi}^{\mathcal{B}_{p}}(\bar{\epsilon}^{(0,1)}(\tau^{\bullet}))=-\tilde{\lambda}^{(0,1)}.
\end{equation}
Now consider the decomposition $\beta=\displaystyle{\sum}_{i}\beta_{i}$ where $\beta_{i}$ is irreducible, then
\begin{equation}\label{1,0} 
\tilde{\Psi}^{\mathcal{B}_{p}}(\bar{\epsilon}^{(\beta_{i},0)}(\tau^{\bullet}))=-\overline{DT}^{\beta_{i}}(\tau)\tilde{\lambda}^{(\beta_{i},0)}
\end{equation}
where $\overline{DT}^{\beta_{i}}(\tau)$ is the generalized Donaldson--Thomas invariant defined in \cite[Definition 5.15]{a30}. Now apply the Lie algebra morphism $\tilde{\Psi}^{\mathcal{B}_{p}}$ to both sides of Equation \eqref{epsilon2} and use Definition \ref{Bp-ss-defn} as well as the results obtained in \eqref{0,1} and \eqref{1,0}. We obtain the following equation:
\begin{align}\label{wall-cross}
&
\textbf{B}^{ss}_{p}(X,\beta,2,\tilde{\tau})\cdot\tilde{\lambda}^{(\beta,2)}=
\sum_{1\leq l,\beta_{1}+\cdots+\beta_{l}=\beta}\frac{(-1)^{l}}{l!}\cdot[[\cdots[[\tilde{\Psi}^{\mathcal{B}_{p}}(\bar{\epsilon}^{(0,2)}(\tau^{\bullet})),-\overline{DT}^{\beta_{1}}(\tau)\tilde{\lambda}^{(\beta_{1},0)}],-\notag\\
&
\overline{DT}^{\beta_{2}}(\tau)\tilde{\lambda}^{(\beta_{2},0)}],
\cdots],-\overline{DT}^{\beta_{l}}(\tau)\tilde{\lambda}^{(\beta_{l},0)}]\notag\\
\end{align}
\subsubsection{Computation of $\tilde{\Psi}^{\mathcal{B}_{p}}(\bar{\epsilon}^{(0,2)}(\tau^{\bullet}))$}\label{sec11}
By part $(4)$ of Proposition \ref{stackprop} the characteristic stack function of the moduli stack of strictly $\tau^{\bullet}$-semistable objects in class $(0,2)$ is given by:$$\bar{\delta}^{(0,2)}(\tau^{\bullet})=\bar{\delta}(\mathfrak{M}^{(0,2)}_{\mathcal{B}_{p},ss}(\tau^{\bullet}))=\left[\left(\left[\frac{\operatorname{Spec}(\mathbb{C})}{\operatorname{GL}_{2}(\mathbb{C})}\right], \mu\right)\right],$$where $\mu:\left[\frac{\operatorname{Spec}(\mathbb{C})}{\operatorname{GL}_{2}(\mathbb{C})}\right]\to \mathfrak{M}^{(\beta,2)}_{\mathcal{B}_{p},ss}(\tau^{\bullet})$ is the natural embedding morphism.
It is shown in \cite[Section 6.2]{a33} that given a stack function $\left[\left(\left[\frac{\mathcal{U}}{\operatorname{GL}_{2}(\mathbb{C})}\right],\mu\right)\right]$, where $\mathcal{U}$ is a quasi-projective variety, one has the following identity:
\begin{align}\label{FGTG}
\left[\left(\left[\frac{\mathcal{U}}{\operatorname{GL}_{2}(\mathbb{C})}\right],\mu\right)\right]&=
F(\operatorname{GL}_{2}(\mathbb{C}),\mathbb{G}^{2}_{m},\mathbb{G}^{2}_{m})\left[\left(\left[\frac{\mathcal{U}}{\mathbb{G}^{2}_{m}}\right],\mu\circ i_{1}\right)\right]\notag\\
&
+F(\operatorname{GL}_{2}(\mathbb{C}),\mathbb{G}^{2}_{m},\mathbb{G}_{m})\left[\left(\left[\frac{\mathcal{U}}{\mathbb{G}_{m}}\right],\mu\circ i_{2}\right)\right],\notag\\
\end{align}
where and $\mu\circ i_{1}$ and $\mu\circ i_{2}$ are the obvious embeddings and
\begin{align}\label{ration-value}
&
F(\operatorname{GL}_{2}(\mathbb{C}),\mathbb{G}^{2}_{m},\mathbb{G}^{2}_{m})=\frac{1}{2}\,\,\,\,,\,
F(\operatorname{GL}_{2}(\mathbb{C}),\mathbb{G}^{2}_{m},\mathbb{G}_{m})=-\frac{3}{4}.\notag\\
\end{align}
Now substitute  \eqref{ration-value} in \eqref{FGTG} and obtain:
\begin{equation}\label{3/4}
\bar{\delta}^{(0,2)}(\tau^{\bullet})=\frac{1}{2}\left[\left(\left[\frac{\operatorname{Spec}(\mathbb{C})}{\mathbb{G}^{2}_{m}}\right],\mu\circ i_{1}\right)\right]-\frac{3}{4}\left[\left(\left[\frac{\operatorname{Spec}(\mathbb{C})}{\mathbb{G}_{m}}\right],\mu\circ i_{2}\right)\right].
\end{equation}
In order to compute $\tilde{\Psi}^{\mathcal{B}_{p}}(\bar{\epsilon}^{(0,2)}(\tau^{\bullet}))$ one uses the definition of $\bar{\epsilon}^{(0,2)}(\tau^{\bullet})$ in \cite{a30} (Definition 3.10):
\begin{equation}\label{e-total}
\bar{\epsilon}^{(0,2)}(\tau^{\bullet})=\bar{\delta}^{(0,2)}(\tau^{\bullet})-\frac{1}{2}\cdot\bar{\delta}^{(0,1)}(\tau^{\bullet})*\bar{\delta}^{(0,1)}(\tau^{\bullet}).
\end{equation}
Substitute the right hand side of \eqref{3/4} in \eqref{e-total} and obtain:
\begin{align}\label{e-02}
\bar{\epsilon}^{(0,2)}(\tau^{\bullet})&=\frac{1}{2}\left[\left(\left[\frac{\operatorname{Spec}(\mathbb{C})}{\mathbb{G}^{2}_{m}}\right],\mu\circ i_{1}\right)\right]\notag\\
&
-\frac{3}{4}\left[\left(\left[\frac{\operatorname{Spec}(\mathbb{C})}{\mathbb{G}_{m}}\right],\mu\circ i_{2}\right)\right]
-\frac{1}{2}\cdot\bar{\delta}^{(0,1)}(\tau^{\bullet})*\bar{\delta}^{(0,1)}(\tau^{\bullet}).\notag\\
\end{align}
Next we compute $\bar{\delta}^{(0,1)}(\tau^{\bullet})*\bar{\delta}^{(0,1)}(\tau^{\bullet})$ (which is equal to $\bar{\epsilon}^{(0,1)}(\tau^{\bullet})*\bar{\epsilon}^{(0,1)}(\tau^{\bullet})$ since there exist no strictly $\tau^{\bullet}$-semistable objects in $\mathcal{B}_{p}$ with class $(0,1)$).
\subsubsection{Computation of $\bar{\epsilon}^{(0,1)}(\tau^{\bullet})*\bar{\epsilon}^{(0,1)}(\tau^{\bullet})$}\label{compute}
By Proposition \ref{stackprop} $$\bar{\epsilon}^{(0,1)}(\tau^{\bullet})=\left[\operatorname{Spec}(\mathbb{C})/\mathbb{G}_{m}\right].$$In order to compute $\bar{\epsilon}^{(0,1)}(\tau^{\bullet})*\bar{\epsilon}^{(0,1)}(\tau^{\bullet})$, we need to follow the definition of multiplication in the Ringel-Hall algebra of stack functions given in \cite[Definition 2.7]{a30}. Consider objects $(F_{i},V_{i},\phi_{i})$ in $\mathcal{B}_{p}$ of type $(\beta_{i},d_{i})$ for $i=1,\cdots,3$. Let  $\mathfrak{Exact}_{\mathcal{B}_{p}}$ denote the moduli stack of exact sequences of objects in $\mathcal{B}_{p}$ of the form:
\begin{equation}\label{exact1'}
0\rightarrow (F_{1},V_{1},\phi_{1})\rightarrow (F_{2},V_{2},\phi_{2})\rightarrow (F_{3},V_{3},\phi_{3})\rightarrow 0
\end{equation}
Let $\pi_{i}:\mathfrak{Exact}_{\mathcal{B}_{p}}\rightarrow \mathfrak{M}_{B_{p}}(\tau^{\bullet})$ for $i=1,2,3$ be the projection map that sends the exact sequence in \eqref{exact1'} to the left, middle and right terms respectively. Denote by $\mathcal{Z}$ the fibered product $$(\left[\operatorname{Spec}(\mathbb{C})/\mathbb{G}_{m}\right]\times \left[\operatorname{Spec}(\mathbb{C})/\mathbb{G}_{m}\right])\times_{\rho_{1}\times\rho_{1},\mathfrak{M}_{\mathcal{B}_{p}}(\tau^{\bullet})\times \mathfrak{M}_{\mathcal{B}_{p}}(\tau^{\bullet}),\pi_{1}\times\pi_{3}}\mathfrak{Exact}_{\mathcal{B}_{p}}.$$According to \cite[Definition 2.7]{a30}, $\bar{\epsilon}^{(0,1)}(\tau^{\bullet})*\bar{\epsilon}^{(0,1)}(\tau^{\bullet})=(\pi_{2}\circ \Phi)_{*}\mathcal{Z}$ where the map $\Phi$ is given by the following commutative diagram:
\begin{equation*}\label{embedding}
\begin{tikzpicture}
back line/.style={densely dotted}, 
cross line/.style={preaction={draw=white, -, 
line width=6pt}}] 
\matrix (m) [matrix of math nodes, 
row sep=3em, column sep=3.em, 
text height=1.5ex, 
text depth=0.25ex]{ 
\mathcal{Z}&\mathfrak{Exact}_{\mathcal{B}_{p}}&\mathfrak{M}_{\mathcal{B}_{p}}(\tau^{\bullet})\\
\left[\operatorname{Spec}(\mathbb{C})/\mathbb{G}_{m}\right]\times \left[\operatorname{Spec}(\mathbb{C})/\mathbb{G}_{m}\right]&\mathfrak{M}_{\mathcal{B}_{p}}(\tau^{\bullet})\times \mathfrak{M}_{\mathcal{B}_{p}}(\tau^{\bullet})&\\};
\path[->]
(m-1-1) edge node [above] {$\Phi$} (m-1-2)
(m-1-2) edge node [above] {$\pi_{2}$} (m-1-3)
(m-1-1) edge (m-2-1)
(m-1-2) edge node [right] {$\pi_{1}\times \pi_{3}$}(m-2-2)
(m-2-1) edge node [above] {$\rho_{1}\times \rho_{1}$} (m-2-2);
\end{tikzpicture}
\end{equation*} 
\begin{lemma}\label{a1prod}
The product $\bar{\epsilon}^{(0,1)}(\tau^{\bullet}) * \bar{\epsilon}^{(0,1)}(\tau^{\bullet})$ is given as 
\begin{equation*}
\bar{\epsilon}^{(0,1)}(\tau^{\bullet}) * \bar{\epsilon}^{(0,1)}(\tau^{\bullet})=\left[\left(\frac{\operatorname{Spec}(\mathbb{C})}{\mathbb{A}^{1}\rtimes\mathbb{G}^{2}_{m}},\iota\right)\right]
\end{equation*}
where $\iota$ is defined to be the corresponding embedding.
\end{lemma}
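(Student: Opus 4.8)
The plan is to compute the Hall-algebra product $\bar{\epsilon}^{(0,1)}(\tau^{\bullet}) * \bar{\epsilon}^{(0,1)}(\tau^{\bullet})$ directly from the definition of multiplication of stack functions, following \cite{a30} (Definition 2.7), and to identify the resulting moduli stack of extensions with a global quotient. First I would recall that $\bar{\epsilon}^{(0,1)}(\tau^{\bullet}) = \bar{\delta}^{(0,1)}(\tau^{\bullet}) = [\operatorname{Spec}(\mathbb{C})/\mathbb{G}_{m}]$ by Proposition \ref{stackprop}, the single point being $(0,\mathbb{C},0)$ with automorphism group $\mathbb{G}_{m}$. Then the fibered product $\mathcal{Z}$ defined in the excerpt parametrises short exact sequences
\begin{equation*}
0\rightarrow (0,\mathbb{C},0)\rightarrow (F_{2},V_{2},\phi_{2})\rightarrow (0,\mathbb{C},0)\rightarrow 0
\end{equation*}
in $\mathcal{B}_{p}$. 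Since the sub- and quotient-objects both have zero sheaf part, the middle term is forced to be $(0,\mathbb{C}^{2},0)$: the sheaf part of $F_{2}$ is an extension of $0$ by $0$, hence $0$, and $\dim V_{2}=1+1=2$. So as a set $\mathfrak{Exact}_{\mathcal{B}_{p}}$ over $(\pi_1,\pi_3)$ maps to the single point $\big((0,\mathbb{C},0),(0,\mathbb{C},0)\big)$, and the content of the computation is entirely in keeping track of automorphisms and of the freedom in the choice of the extension datum.

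Next I would unwind the groupoid structure of $\mathfrak{Exact}_{\mathcal{B}_{p}}$ at this point. A point of it is a short exact sequence as above together with a choice of splitting data; concretely, having fixed the sub-object $(0,\mathbb{C},0)\hookrightarrow(0,\mathbb{C}^{2},0)$ as (say) the first coordinate line and the quotient as the second, the remaining freedom is a choice of lift of the generator of the quotient, i.e.\ an element of $\mathbb{C}=\operatorname{Hom}$ of the quotient into the middle term modulo the sub — this accounts for an $\mathbb{A}^{1}$ of extensions (with the trivial extension corresponding to $0$). The automorphism group of such a configuration (preserving the sub and quotient, i.e.\ the group acting on the fibre of $\pi_1\times\pi_3$) is the group of block upper-triangular $2\times 2$ matrices $\begin{pmatrix}\lambda_1 & * \\ 0 & \lambda_2\end{pmatrix}$ with $\lambda_i\in\mathbb{G}_m$ and $*\in\mathbb{A}^1$, which is $\mathbb{A}^{1}\rtimes\mathbb{G}_{m}^{2}$. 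Taking the fibered product with $[\operatorname{Spec}(\mathbb{C})/\mathbb{G}_m]\times[\operatorname{Spec}(\mathbb{C})/\mathbb{G}_m]$ over $\rho_1\times\rho_2$ and $\pi_1\times\pi_3$ rigidifies the sub- and quotient-objects (it kills the two diagonal $\mathbb{G}_m$'s coming from $V_1$ and $V_3$ by matching them against the automorphisms already present), so that $\mathcal{Z}$ itself is $[\operatorname{Spec}(\mathbb{C})/(\mathbb{A}^1\rtimes\mathbb{G}_m^2)]$; the $\mathbb{A}^{1}$ of extension parameters, being a quasi-projective variety acted on within the stabiliser, is absorbed into the quotient group rather than appearing as a base factor. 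Finally I would note that $\pi_2\circ\Phi$ sends all of $\mathcal{Z}$ to the single point $(0,\mathbb{C}^2,0)$ of $\mathfrak{M}_{\mathcal{B}_p}(\tau^{\bullet})$, so the pushforward $(\pi_2\circ\Phi)_*\mathcal{Z}$ is exactly the stack function $\big[\big(\tfrac{\operatorname{Spec}(\mathbb{C})}{\mathbb{A}^1\rtimes\mathbb{G}_m^2},\iota\big)\big]$ with $\iota$ the induced 1-morphism, which is the claimed formula.

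The main obstacle I anticipate is the careful bookkeeping of which automorphisms survive the fibered product and precisely how the semidirect-product structure $\mathbb{A}^1\rtimes\mathbb{G}_m^2$ arises — that is, verifying that the $\operatorname{Ext}^{1}$-space of extensions of $(0,\mathbb{C},0)$ by $(0,\mathbb{C},0)$ in the abelian category $\mathcal{B}_{p}$ is one-dimensional and that the natural action of the automorphisms of the sub and quotient on it is the standard scaling action, so that the total stabiliser at the point of $\mathfrak{Exact}_{\mathcal{B}_p}$ is the upper-triangular Borel of $\operatorname{GL}_2$. One should double-check using the short exact sequence \eqref{coreexact} and the fact that $\operatorname{Hom}$ and $\operatorname{Ext}$ in $\mathcal{B}_{p}$ between objects $(0,V,0)$ reduce to $\operatorname{Hom}$ and $\operatorname{Ext}$ of vector spaces, so that $\operatorname{Ext}^{1}_{\mathcal{B}_p}\big((0,\mathbb{C},0),(0,\mathbb{C},0)\big)\cong\mathbb{C}$; this is where the $\mathbb{A}^{1}$ factor comes from, and getting the semidirect product rather than a direct product right is the delicate point. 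Everything else is a formal unwinding of Definition 2.7 of \cite{a30}.
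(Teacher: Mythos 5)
Your final formula is correct and your identification of the relevant automorphism group as the upper-triangular Borel of $\operatorname{GL}_{2}$, i.e.\ $\mathbb{A}^{1}\rtimes\mathbb{G}_{m}^{2}$, is exactly the right structural point; note that the paper itself does not carry out this computation but simply cites Toda (Lemma 5.3 of \cite{a37}), so your direct unwinding of Definition 2.7 of \cite{a30} is a more self-contained route to the same statement.

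However, the step you single out as the delicate point to verify is stated incorrectly, and as written it would not survive the double-check you propose. You claim that $\operatorname{Ext}^{1}_{\mathcal{B}_{p}}\big((0,\mathbb{C},0),(0,\mathbb{C},0)\big)\cong\mathbb{C}$ and that ``this is where the $\mathbb{A}^{1}$ factor comes from.'' In fact this $\operatorname{Ext}^{1}$ vanishes: any short exact sequence $0\rightarrow(0,\mathbb{C},0)\rightarrow(F_{2},V_{2},\phi_{2})\rightarrow(0,\mathbb{C},0)\rightarrow 0$ in $\mathcal{B}_{p}$ forces $F_{2}=0$ (as you yourself observe), so the sequence lives in the full subcategory of objects $(0,V,0)$, which is equivalent to the category of finite-dimensional vector spaces; every such extension splits. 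There is consequently exactly one isomorphism class of exact sequences here, represented by $0\rightarrow\mathbb{C}\rightarrow\mathbb{C}^{2}\rightarrow\mathbb{C}\rightarrow 0$. The $\mathbb{A}^{1}$ is not a space of extension classes but the unipotent radical of the automorphism group of this single split sequence: an automorphism of the sequence is a triple $(\alpha,\beta,\gamma)$ with $\beta$ upper triangular with diagonal $(\alpha,\gamma)$ and off-diagonal entry ranging over $\operatorname{Hom}_{\mathcal{B}_{p}}\big((0,\mathbb{C},0),(0,\mathbb{C},0)\big)\cong\mathbb{C}$. Thus $\mathfrak{Exact}_{\mathcal{B}_{p}}$ over the point $\big((0,\mathbb{C},0),(0,\mathbb{C},0)\big)$ is $[\operatorname{Spec}(\mathbb{C})/(\mathbb{A}^{1}\rtimes\mathbb{G}_{m}^{2})]$ outright, with no $\mathbb{A}^{1}$ of objects to be ``absorbed'' into the group; your middle paragraph conflates the torsor of splittings (an $\operatorname{Hom}(C,A)$-torsor of extra data, all choices giving the same point of $\mathfrak{Exact}_{\mathcal{B}_{p}}$) with a family of extensions. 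The distinction matters: if $\operatorname{Ext}^{1}$ really were one-dimensional, the stack of exact sequences would not be a single point modulo the Borel and the resulting stack function would be different. With the source of the $\mathbb{A}^{1}$ corrected to $\operatorname{Hom}$ rather than $\operatorname{Ext}^{1}$, the rest of your argument (the fibered product with $[\operatorname{Spec}(\mathbb{C})/\mathbb{G}_{m}]^{2}$ being an isomorphism onto the relevant locus, and $\pi_{2}\circ\Phi$ landing at the single point $(0,\mathbb{C}^{2},0)$) goes through and yields the lemma.
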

\begin{proof}
This is essentially proved in \cite[Lemma 5.3]{a37}.
 \end{proof}

Following \cite[Equation (11.13) , Equation (11.4)]{a30} and Lemma \ref{a1prod}:
\begin{align}\label{e1e1}
\bar{\epsilon}^{(0,1)}(\tau^{\bullet}) * \bar{\epsilon}^{(0,1)}(\tau^{\bullet})&=\left[\left(\frac{\operatorname{Spec}(\mathbb{C})}{\mathbb{A}^{1}\rtimes\mathbb{G}^{2}_{m}},\iota\right)\right]=\notag\\
&
-\left[\left(\frac{\operatorname{Spec}(\mathbb{C})}{\mathbb{G}_{m}},e_{2}\right)\right]+\left[\left(\frac{\operatorname{Spec}(\mathbb{C})}{\mathbb{G}^{2}_{m}},e_{1}\right)\right],\notag\\
\end{align}
where $e_{1}=\mu\circ i_{1}$ and $e_{2}=\mu\circ i_{2}$ denote the corresponding embedding maps. Since $\bar{\epsilon}^{(0,1)}(\tau^{\bullet}) * \bar{\epsilon}^{(0,1)}(\tau^{\bullet})=\bar{\delta}^{(0,1)}(\tau^{\bullet}) * \bar{\delta}^{(0,1)}(\tau^{\bullet})$, by substituting the right hand side of \eqref{e1e1} in \eqref{e-02} one obtains:
\begin{align}\label{e-03}
\bar{\epsilon}^{(0,2)}(\tau^{\bullet})&=\frac{1}{2}\left[\left(\left[\frac{\operatorname{Spec}(\mathbb{C})}{\mathbb{G}^{2}_{m}}\right],\mu\circ i_{1}\right)\right]-\frac{3}{4}\left[\left(\left[\frac{\operatorname{Spec}(\mathbb{C})}{\mathbb{G}_{m}}\right],\mu\circ i_{2}\right)\right]\notag\\
&
-\frac{1}{2}\left(-\left[\left(\frac{\operatorname{Spec}(\mathbb{C})}{\mathbb{G}_{m}},\mu\circ i_{2}\right)\right]+\left[\left(\frac{\operatorname{Spec}(\mathbb{C})}{\mathbb{G}^{2}_{m}},\mu\circ i_{1}\right)\right]\right)\notag\\
&
=-\frac{1}{4}\left[\left(\left[\frac{\operatorname{Spec}(\mathbb{C})}{\mathbb{G}_{m}}\right],\mu\circ i_{2}\right)\right].\notag\\
\end{align}
Now apply the Lie algebra morphism $\tilde{\Psi}^{\mathcal{B}_{p}}$ to $\bar{\epsilon}^{(0,2)}(\tau^{\bullet})$. By Equation \eqref{e-03}:
\begin{align*}
&
\tilde{\Psi}^{\mathcal{B}_{p}}(\bar{\epsilon}^{(0,2)}(\tau^{\bullet}))=\chi^{na}(\frac{-1}{4}\left[\frac{\operatorname{Spec}(\mathbb{C})}{\mathbb{G}_{m}}\right],(\mu\circ i_{2})^{*}\nu_{\mathfrak{M}^{(0,2)}_{\mathcal{B}_{p}}})\tilde{\lambda}^{(0,2)}.
\end{align*} 
Note that by Proposition \ref{stackprop}, $\mathfrak{M}^{(0,2)}_{\mathcal{B}_{p},ss}(\tau^{\bullet})\cong [\operatorname{Spec}(\mathbb{C})/\operatorname{GL}_{2}(\mathbb{C})]$ and hence $\left[\frac{\operatorname{Spec}(\mathbb{C})}{\mathbb{G}_{m}}\right]$ has relative dimension 3 over $\mathfrak{M}^{(0,2)}_{\mathcal{B}_{p},ss}(\tau^{\bullet})$. Moreover, $\left[\frac{\operatorname{Spec}(\mathbb{C})}{\mathbb{G}_{m}}\right]$ is given by  a single point with Behrend's multiplicity $-1$ and $$(\mu\circ i_{2})^{*}\nu_{\mathfrak{M}^{(0,2)}_{\mathcal{B}_{p}}})\tilde{\lambda}^{(0,2)}=(-1)^{3}\cdot\nu_{\left[\frac{\operatorname{Spec}(\mathbb{C})}{\mathbb{G}_{m}}\right]},$$therefore: 
\begin{align*}
&
\tilde{\Psi}^{\mathcal{B}_{p}}(\bar{\epsilon}^{(0,2)}(\tau^{\bullet}))=\chi^{na}\left(\frac{-1}{4}\left[\frac{\operatorname{Spec}(\mathbb{C})}{\mathbb{G}_{m}}\right],(-1)^{3}\cdot\nu_{\left[\frac{\operatorname{Spec}(\mathbb{C})}{\mathbb{G}_{m}}\right]} \right)\tilde{\lambda}^{(0,2)}
\notag\\
&
=(-1)^{1}\cdot(-1)^{3}\cdot \frac{-1}{4}\tilde{\lambda}^{(0,2)}=\frac{-1}{4}\tilde{\lambda}^{(0,2)}.
\end{align*}
The wall-crossing identity \eqref{wall-cross} is then given as follows:
\begin{align}\label{notyet}
&
\textbf{B}^{ss}_{p}(X,\beta,2,\tilde{\tau})\cdot\tilde{\lambda}^{(\beta,2)}
=\notag\\
&
\sum_{1\leq l,\beta_{1}+\cdots+\beta_{l}=\beta}\frac{-1}{4}\cdot\frac{1}{l!}\prod_{i=1}^{l}\overline{DT}^{\beta_{i}}(\tau)\cdot[[\cdots[[\tilde{\lambda}^{(0,2)},\tilde{\lambda}^{(\beta_{1},0)}],\tilde{\lambda}^{(\beta_{2},0)}],\cdots],\tilde{\lambda}^{(\beta_{l},0)}].\notag\\
\end{align}
Now we use the fact that by Definition \ref{13.11}, the generators $\tilde{\lambda}^{(\beta,d)}$ satisfy the following property:
\begin{equation*}
[\tilde{\lambda}^{(\beta,d)},\tilde{\lambda}^{(\gamma,e)}]=(-1)^{\bar{\chi}_{\mathcal{B}_{p}}((\beta,d),(\gamma,e))}\bar{\chi}_{\mathcal{B}_{p}}((\beta,d),(\gamma,e))\tilde{\lambda}^{(\beta+\gamma,d+e)}
\end{equation*}
which enables us to simplify \eqref{notyet} as follows: 
\begin{align}\label{notyet2}
&
\textbf{B}^{ss}_{p}(X,\beta,2,\tilde{\tau})\cdot\tilde{\lambda}^{(\beta,2)}=\sum_{1\leq l,\beta_{1}+\cdots+\beta_{l}=\beta}\frac{-1}{4}\cdot\frac{1}{l!}\cdot\prod_{i=1}^{l}\left(\overline{DT}^{\beta_{i}}(\tau)\cdot \bar{\chi}_{\mathcal{B}_{p}}((\beta_{1}+\cdots+\beta_{i-1},2),(\beta_{i},0))\right)\notag\\
 &
\cdot (-1)^{\sum_{i=1}^{l}\bar{\chi}_{\mathcal{B}_{p}}((\beta_{1}+\cdots \beta_{i-1},2),(\beta_{i},0))}\cdot \tilde{\lambda}^{(\beta,2)}\notag\\
\end{align}
by canceling $\tilde{\lambda}^{(\beta,2)}$ from both sides we obtain the wallcrossing equation and this finishes our computation:
\begin{align}\label{yet}
&
\textbf{B}^{ss}_{p}(X,\beta,2,\tilde{\tau})=
\sum_{1\leq l,\beta_{1}+\cdots+\beta_{l}=\beta}\frac{-1}{4}\cdot\frac{1}{l!}\cdot\prod_{i=1}^{l}\bigg(\overline{DT}^{\beta_{i}}(\tau)\cdot \bar{\chi}_{\mathcal{B}_{p}}((\beta_{1}+\cdots+\beta_{i-1},2)\notag\\
&
,(\beta_{i},0))\cdot (-1)^{\sum_{i=1}^{l}\bar{\chi}_{\mathcal{B}_{p}}((\beta_{1}+\cdots \beta_{i-1},2),(\beta_{i},0))}\bigg).\notag\\
\end{align}
\section{$\tilde{\tau}$-semistable objects in $\mathcal{B}_{p}$ versus higher rank Joyce--Song $\Hat{\tau}$-stable pairs}
Following Remark \ref{better}, the final step in our calculation corresponds to relating the invariants of $\tilde{\tau}$-semistable objects in the auxiliary category $\mathcal{B}_{p}$ to the $\Hat{\tau}$-stable higher rank Joyce--Song pairs. 

\begin{theorem}\label{HFT-Bp}
A higher rank pair $\mathcal{O}^{\oplus r}_{X}(-n)\xrightarrow{\phi} F$ is $\Hat{\tau}$-(semi)stable in the sense of Definition \ref{newstabl} if and only if it is $\tilde{\tau}$-semistable (when realized as an object in $\mathcal{B}^{\textbf{R}}_{p}$) 
\end{theorem}
\begin{proof}
1. $\Hat{\tau}$-semistability$\Rightarrow \tilde{\tau}$-semistability:

One proves the claim by contradiction. Suppose $\mathcal{O}^{\oplus r}_{X}(-n)\rightarrow F$ is not $\tilde{\tau}$-semistable. Then there exists a subobject $\mathcal{O}^{\oplus r}_{X}(-n)\rightarrow F'$, a quotient object $(Q,0,0)$ and an exact sequence$$0\rightarrow [\mathcal{O}^{\oplus r}_{X}(-n)\rightarrow F']\rightarrow [\mathcal{O}^{\oplus r}_{X}(-n)\rightarrow F]\rightarrow [0\rightarrow Q]\rightarrow 0,$$such that $\tilde{\tau}(\mathcal{O}^{\oplus r}_{X}(-n)\rightarrow F')=1$ and $\tilde{\tau}(0\rightarrow Q)=0$. Consider the following commutative diagram:
\begin{equation}\label{tilde-diag}
\begin{tikzpicture}
back line/.style={densely dotted}, 
cross line/.style={preaction={draw=white, -, 
line width=6pt}}] 
\matrix (m) [matrix of math nodes, 
row sep=1em, column sep=5.5em, 
text height=1.5ex, 
text depth=0.25ex]{ 
0&0\\
\mathcal{O}^{\oplus r}_{X}(-n)&F'\\
\mathcal{O}^{\oplus r}_{X}(-n)&F\\
0&Q\\
0&0\\};
\path[->]
(m-1-1) edge (m-2-1) 
(m-2-1) edge (m-2-2) edge node [left] {$id$} (m-3-1)
(m-3-1) edge (m-3-2) edge (m-4-1)
(m-4-1) edge (m-4-2)
(m-1-2) edge (m-2-2);
\path[right hook->] (m-2-2) edge (m-3-2);
\path[->] (m-3-2) edge (m-4-2)
(m-4-1) edge (m-5-1)
(m-4-2) edge (m-5-2);
\end{tikzpicture}.
\end{equation} 
Now consider the right vertical short exact sequence in diagram \eqref{tilde-diag}. Since $\mathcal{A}_{p}$ is an abelian category (it contains kernels and cokernels), we can see that naturally $p(F')=p\nless p(F)=p$, and we obtain a contradiction with $\Hat{\tau}$-stability of $\mathcal{O}^{\oplus r}_{X}(-n)\rightarrow F$ according to part (2) of Definition \ref{newstabl}.

(2). $\tilde{\tau}$-semistability$\Rightarrow \Hat{\tau}$-stability

Similarly suppose $\mathcal{O}^{\oplus r}_{X}(-n)\xrightarrow{\phi} F$ is $\Hat{\tau}$-unstable. Then there exists a proper nonzero subsheaf $F'\subset F$ such that $\phi$ factors through $F'$ and $p(F')=p(F)=p$. Now obtain the diagram in \eqref{tilde-diag} and consider the right vertical short exact sequence. By the same reasoning as above, $p(Q)=p$. Hence $Q\in\mathcal{A}_{p}$ and the complex $0\rightarrow Q$ represents an object in $\mathcal{B}_{p}$ with $\tilde{\tau}(0\to Q)=0<\tilde{\tau}(\mathcal{O}^{\oplus r}_{X}(-n)\xrightarrow{\phi} F')=1$. Hence $\mathcal{O}^{\oplus r}_{X}(-n)\rightarrow F$ is not $\tilde{\tau}$-semistable according to Definition \ref{weakjoyce}, which contradicts the assumption.
\end{proof}
\begin{cor}\label{N}
The moduli stack, $\mathfrak{M}^{(\beta,r)}_{stp}(\Hat{\tau})$,  of $\Hat{\tau}$-semistable pairs of rank $r$ is a $\operatorname{GL}_{r}(\mathbb{C})$-torsor over $\mathfrak{M}^{(\beta,r)}_{\mathcal{B}_{p},ss}(\tilde{\tau})$ with a local section map $\pi: \mathfrak{M}^{(\beta,r)}_{\mathcal{B}_{p},ss}(\tilde{\tau})\to\mathfrak{M}^{(\beta,r)}_{stp}(\Hat{\tau})$ of relative dimension $-r^2$. In particular it is true that $\pi^*\nu_{\mathfrak{M}^{(\beta,r)}_{stp}(\Hat{\tau})}=(-1)^{r^2}\cdot\nu_{\mathfrak{M}^{(\beta,r)}_{\mathcal{B}_{p},ss}(\tilde{\tau})}$.

\end{cor} \begin{proof}
The proof of the statement in the corollary is given as a direct consequence of Theorem \ref{HFT-Bp} and Theorem \ref{theorem81}; The consequence of Theorem \ref{HFT-Bp} is that, there exists an isomorphism between the moduli stack $\mathfrak{M}^{(\beta,r)}_{stp}(\Hat{\tau})$ and the part of $\mathfrak{M}^{(\beta,r)}_{\mathcal{B}^{\textbf{R}}_{p},ss}$ that lives over $\mathfrak{M}^{(\beta,r)}_{\mathcal{B}_{p},ss}(\tilde{\tau})$. Therefore, using this isomorphism, together with the same arguments as in proof of parts (2) and (3) of Theorem  \ref{theorem81} we see that for $r>1$, $\mathfrak{M}^{(\beta,r)}_{stp}(\Hat{\tau})$ makes a $\operatorname{GL}_{r}(\mathbb{C})$-torsor over $\mathfrak{M}^{(\beta,r)}_{\mathcal{B}_{p},ss}(\tilde{\tau})$ and moreover, locally $\mathfrak{M}^{(\beta,r)}_{\mathcal{B}_{p},ss}(\tilde{\tau})$ is a $\operatorname{GL}_{r}(\mathbb{C})$-gerbe over $\mathfrak{M}^{(\beta,r)}_{stp}(\Hat{\tau})$, and hence the existence of an identity between the corresponding Behrend functions follows immediately from this fact. This is similar to the statement in \cite[Proposition 13.6 (b)]{a30} where the authors showed that for $r=1$, $\mathfrak{M}^{(\beta,1)}_{\mathcal{B}_{p},ss}(\tilde{\tau})$ is a $\mathbb{G}_{m}$-gerbe over $\mathfrak{M}^{(\beta,1)}_{stp}(\Hat{\tau})$, and they concluded a similar identity between the Behrend functions.
\end{proof}
 Let us now recall the definition of the invariants of rank $r$ $\Hat{\tau}$-semistable pairs one more time$$\textbf{N}^{\beta,r}_{stp}(\hat{\tau})=(-1)^{r^2}\textbf{B}_{p}^{ss}(X,\beta, r, \tilde{\tau}).$$
 Now we specialize this definition to $r=2$ and obtain; 
\begin{cor}\label{he-lol}
Set $r=2$. In this case $\nu_{\mathfrak{M}^{(\beta,2)}_{stp}(\Hat{\tau})}=\nu_{\mathfrak{M}^{(\beta,r)}_{\mathcal{B}_{p},ss}(\tilde{\tau})}$ by Corollary \ref{N}. Now we use this fact and define the invariant of rank 2 $\Hat{\tau}$-semistable pairs with $\text{ch}(F)=\beta$ to be equal to $\textbf{B}^{ss}_{p}(X,\beta,2,\tilde{\tau})$ defined in Definition \ref{Bp-ss-defn}. In other words we use the following identity as the definition of $\textbf{N}^{\beta,2}_{stp}(\Hat{\tau})$: 
\begin{align}\label{define-it}
&
\textbf{N}^{\beta,2}_{stp}(\Hat{\tau})=
\sum_{1\leq l,\beta_{1}+\cdots+\beta_{l}=\beta}\frac{-1}{4}\cdot\frac{1}{l!}\cdot\prod_{i=1}^{l}\bigg(\overline{DT}^{\beta_{i}}(\tau)\cdot \bar{\chi}_{\mathcal{B}_{p}}((\beta_{1}+\cdots+\beta_{i-1},2)\notag\\
&
,(\beta_{i},0))\cdot (-1)^{\sum_{i=1}^{l}\bar{\chi}_{\mathcal{B}_{p}}((\beta_{1}+\cdots \beta_{i-1},2),(\beta_{i},0))}\bigg).\notag\\
\end{align}
\end{cor}
\bibliographystyle{plain}
\bibliography{ref1}
\end{document}